\newtheorem{remark}[theorem]{{\it Remark }\rm }
\newcommand{\dx}{\,\mathrm{d}x}
\newcommand{\bphi}{\bar\varphi}
\newcommand{\Pb}{\mbox{\rm (P)}\xspace}
\newcommand{\Pbred}{\mbox{\rm (P$_{\textrm{red}}$)}\xspace}
\newcommand{\Pbpc}{\mbox{\rm (P$_{\textrm{p.c.}}$)}\xspace}
\newcommand{\uad}{{U_{\rm ad}}}
\newcommand{\proj}{\operatorname{Proj}}
\newcommand{\sign}{\operatorname{sign}}
\def\R{\mathbb R}
\title{Analysis of Optimal Control Problems with an $L^0$ Term in the Cost Functional
\thanks{The first author was partially supported by Spanish Ministerio de Econom\'{\i}a y Competitividad under research project MTM2017-83185-P.
The second author was partially supported by the German Research Foundation (DFG) within
the priority program "Non-smooth and Complementarity-based Distributed Parameter Systems:
Simulation and Hierarchical Optimization" (SPP 1962) under grant number WA
3626/3-2.
}}
\author{Eduardo Casas\thanks{Departmento de Matem\'{a}tica Aplicada y Ciencias de la Computaci\'{o}n, E.T.S.I. Industriales y de Telecomunicaci\'on, Universidad de Cantabria, 39005 Santander, Spain, {\tt eduardo.casas@unican.es}.}
\and Daniel Wachsmuth\thanks{Institut f\"ur Mathematik, Universit\"at W\"urzburg, 97074 W\"urzburg, Germany, {\tt  daniel.wachsmuth@mathematik.uni-wuerzburg.de}.}
}
\begin{document}
\maketitle

\begin{abstract}
In this paper, we investigate optimal control problems subject to a semilinear elliptic partial differential equations.
The cost functional contains a term that measures the size of the support of the control, which is the so-called $L^0$-norm.
We provide necessary and sufficient optimality conditions of second-order. The sufficient second-order condition
is obtained by analyzing a partially convexified problem. Interestingly, the structure of the problem yields second-order conditions with
different bilinear forms for the necessary and for the sufficient condition.
\end{abstract}

\begin{keywords}
optimal control,  semilinear partial differential equation, optimality conditions, sparse controls
\end{keywords}

\begin{AMS}
35J61, %    Semilinear elliptic equations
49K20, %Optimal control; Problems involving partial differential equations
40J52 %Nonsmooth analysis
\end{AMS}

\section{Introduction}
\label{S1}
In this paper, we study the following optimal control problem
\[
\Pb: \quad  \inf_{u \in \uad} J(u):=\int_\Omega L(x,y_u(x))\dx + \frac{\alpha}{2}\|u\|^2_{L^2(\Omega)} + \beta\|u\|_0
\]
where $y_u$ is the solution of the following semilinear elliptic equation
\begin{equation}
\left\{\begin{array}{rccl}Ay + a(x,y)& = & u & \mbox{in } \Omega,\\y & = & 0 & \mbox{on } \Gamma,\end{array}\right.
\label{E1.1}
\end{equation}

Here $A$ denotes an elliptic operator in the domain $\Omega \subset \mathbb{R}^n$, $1 \le n \le 3$, whose boundary is denoted by $\Gamma$,
and $a:\Omega \times \mathbb{R} \longrightarrow \mathbb{R}$ is a given function. Additionally, $L:\Omega \times \mathbb{R} \longrightarrow \mathbb{R}$ is another given function, $\alpha \ge 0$, $\beta > 0$, and
\[
\|u\|_0 = |\{x \in \Omega : u(x) \neq 0\}|,
\]
where $|B|$ denotes the Lebesgue measure of a set $B \subset \Omega$. Finally, we set
\[
\uad = \{u \in L^\infty(\Omega) : |u(x)| \le \gamma \text{ for a.a. } x \in \Omega\}
\]
$0 < \gamma \le \infty$. We assume that $\gamma < \infty$ if $\alpha = 0$. Precise assumptions on these data will be given in the next section.

We are interested in a second-order analysis of this problem. That is, we are looking for optimality conditions of second-order of necessary and sufficient type.
First-order necessary conditions are given by the famous Pontryagin maximum principle, see Theorem \ref{T3.6}.
Due to the properties of the cost functional, several difficulties will arise.
First, the functional $u\mapsto \frac{\alpha}{2}\|u\|^2_{L^2(\Omega)} + \beta\|u\|_0$ is non-smooth. We overcome this difficulty
by studying the convexification of this functional, which is continuously differentiable.
Still, this convexification is not twice differentiable which gives rise to the following observation:
two different bilinear forms connected to a second derivative of this functional are needed for
necessary and sufficient optimality conditions.
Second, the functional $u\mapsto \frac{\alpha}{2}\|u\|^2_{L^2(\Omega)} + \beta\|u\|_0$ is non-convex, and its convexification is not strictly convex.
Hence, we cannot expect that second-order derivatives of the Lagrangian associated with the control problem are coercive in $L^2(\Omega)$.
Here, we resort to techniques developed for bang-bang control problems, see, e.g. \cite{Casas2012}.

Optimal control problems with $L^0$-control cost were recently studied in \cite{ItoKunisch2014,Wachsmuth2019}.
The motivation is to obtain sparse controls, i.e., controls with small support.
In  the seminal paper \cite{Stadler2009}, this was addressed by using $\|u\|_{L^1(\Omega)}$
instead of $\|u\|_0$ in the cost functional.
Optimal control problems with  $L^0$-norms were also used to  enforce a particular control structure.
We refer to \cite{ClasonItoKunisch2016} for an application to switching control problems
and to \cite{ClasonKunisch2014} for control problems, where the control is allowed to take values only from a finite set.

The second-order analysis of the control problem with the convexified cost functional, mentioned above, is related to similar results for sparse control problems in \cite{CasasHerzogWachsmuth2012,CasasHerzogWachsmuth2017,HerzogStadlerWachsmuth2012},
In addition, we use recent results of \cite{CasasMateos2019} to reduce the cone of test directions in sufficient second-order conditions.

The main result of the paper is the derivation of sufficient second-order optimality conditions, given in Theorem \ref{T4.21}. It is proven by applying
similar results for the control problem with a convexified cost functional, which is studied in Section \ref{S4}.
As one might expect, the positivity requirements of sufficient conditions are stronger than those obtained from
necessary second-order conditions, the latter are studied in Section \ref{S3}.
The analysis relies on differentiability results related the control-to-state map associated
with the partial differential equation, these are provided in Section \ref{S2}.
Finally, let us mention that, under a certain assumption, a local (global) solution of the partially convexified problem is also a local (global) solution of \Pb; see Corollary \ref{C4.3}.

%
%
%
%
% The optimization problem \eqref{eq001}--\eqref{eq002} is very challenging to analyze due to
% the presence of the term $\|u\|_0$. Since the mapping $u\mapsto \|u\|_0$ is not weakly lower
% semicontinuous from $L^p(\Omega)$ to $\R$ for all $p\in[1,\infty)$, it is not possible to
% prove existence of solutions.
% It is possible to prove existence of solutions if one adds an additional $H^1$-control
% cost to the functional.
% We refer to \cite{ItoKunisch2014} for a discussion of this and other
% possibilities to obtain existence results.
% If the structure of the function $f$ allows for the technique of needle perturbations, then
% a local solution satisfies the Pontryagin maximum principle \cite{ItoKunisch2014}, see also \eqref{eqpmp}. Due to the
% inherently combinatorial nature of the $L^0$-term, there might be many local solutions.
%

\section{Assumptions and preliminary results}
\label{S2}

Let us formulate the assumptions on our control problem \Pb.

(A1) We assume that $\Omega$ is an open and bounded domain in $\mathbb{R}^n$, $1 \le n \le 3$, with a Lipschitz boundary $\Gamma$, and $A$ denotes a second-order elliptic operator in $\Omega$ of the form
\[
Ay(x)=-\sum_{i,j=1}^{n}
\partial_{x_j}(a_{ij}(x)\partial_{x_i}y(x))
\]
with coefficients $a_{ij} \in L^\infty(\Omega)$ satisfying
\[
\Lambda_A|\xi|^2\leq\sum_{i,j=1}^n a_{ij}(x)\xi_i\xi_j\ \ \mbox{
}\forall \xi\in\mathbb{R}^n \ \mbox{ for a.e. } x\in\Omega
\]
for some $\Lambda_A > 0$.

(A2) $a : \Omega \times \mathbb{R} \longrightarrow \mathbb{R}$ is Carath\'eodory function of class $C^2$ w.r.t.~the second variable satisfying that $a(\cdot,0)\in L^{\bar p}(\Omega)$ with $\bar p > n/2$,
\[
\frac{\partial a}{\partial
y}(x,y) \geq 0 \quad  \mbox{ for a.e. } x \in \Omega,
\]
and for all $M>0$ there exists a constant $C_{a,M}>0$ such that
\[
\left|\frac{\partial a}{\partial y}(x,y)\right|+
\left|\frac{\partial^2 a}{\partial y^2}(x,y)\right| \leq C_{a,M}
\mbox{ for a.e. } x \in \Omega \mbox{ and } |y| \leq M.
\]
Furthermore, for every $M > 0$ and $\varepsilon > 0$ there exists $\delta > 0$, depending on $M$ and $\varepsilon$, such that
\[
\left|\frac{\partial^2a}{\partial y^2}(x,y_2) -
\frac{\partial^2a}{\partial y^2}(x,y_1)\right| < \varepsilon \
\mbox{ if } |y_1|, |y_2| \leq M,\ |y_2 - y_1| \le \delta, \mbox{ and for a.e. } x \in \Omega.
\]

(A3) $L : \Omega \times \mathbb{R} \longrightarrow \mathbb{R}$ is Carath\'eodory function of class $C^2$ w.r.t.~the second variable satisfying that $L(\cdot,0) \in L^1(\Omega)$, and for all $M>0$ there exist a constant $C_{L,M}>0$ and a function $\psi_M\in L^{\bar p}(\Omega)$ such that for every $|y| \le M$ and almost all $x \in \Omega$
\[
\left|\frac{\partial L}{\partial y}(x,y)\right|\leq \psi_M(x), \
\ \ \left|\frac{\partial^2L}{\partial y^2}(x,y)\right| \le C_{L,M}.
\]
In addition, for every $M > 0$ and $\varepsilon > 0$ there exists $\delta > 0$, depending on $M$ and $\varepsilon$, such that
\[
\left|\frac{\partial^2L}{\partial y^2}(x,y_2) -
\frac{\partial^2L}{\partial y^2}(x,y_1)\right| < \varepsilon \
\mbox{ if } |y_1|, |y_2| \leq M,\ |y_2 - y_1| \le \delta, \mbox{ and for a.e. } x \in
\Omega.
\]
In the case $\gamma = + \infty$, we also assume that there exists a function $\psi \in L^1(\Omega)$ such that $L(x,y) \ge \psi(x)$ for a.a.~$x \in \Omega$ and all $y \in \mathbb{R}$.

\subsection*{Discussion of the state equation}

As a consequence of Assumptions (A1) and (A2) we infer that for every $u \in L^p(\Omega)$ with $p > n/2$, the state equation \eqref{E1.1} has a unique solution $y_u \in H_0^1(\Omega) \cap C(\bar\Omega)$. The proof of this result is a quite standard combination of the Schauder's fix point theorem and the $L^\infty(\Omega)$ estimates \cite{Stampacchia65}. For the continuity of the solution in $\bar\Omega$ see, for instance, \cite[Theorem 8.30]{Gilbarg-Trudinger83}. Moreover, the mapping $S:L^p(\Omega) \longrightarrow H_0^1(\Omega) \cap C(\bar\Omega)$, defined by $S(u) = y_u$, is of class $C^2$. In the sequel, we will take $p = 2$ and we will denote by $z_v = S'(u)v$, which is the solution of
\begin{equation}
\left\{\begin{array}{rccc}\displaystyle Az + \frac{\partial a}{\partial y}(x,y)z& = & v & \mbox{in } \Omega\\z & = & 0 & \mbox{on } \Gamma\end{array}\right.
\label{E2.1}
\end{equation}

As usual, we consider the adjoint state equation associated with a control $u$
\begin{equation}
\left\{\begin{array}{rccc}\displaystyle A^*\varphi + \frac{\partial a}{\partial y}(x,y)\varphi& = & \displaystyle \frac{\partial L}{\partial y}(x,y) & \mbox{in } \Omega\\\varphi & = & 0 & \mbox{on } \Gamma\end{array}\right.
\label{E2.2}
\end{equation}
where $y = S(u)$ is the state corresponding to $u$. Because of the Assumptions (A3) on $L$, we have that $\varphi \in H_0^1(\Omega) \cap C(\bar\Omega)$. Moreover, for every $u \in L^2(\Omega)$ we have the estimates
\begin{equation}
\|y_u\|_{L^\infty(\Omega)} \le M_u = \bar C(\|a(\cdot,0)\|_{L^{\bar p}(\Omega)} + \|u\|_{L^2(\Omega)}),\ \|\varphi_u\|_{L^\infty(\Omega)} \le \bar C\|\psi_{M_u}\|_{L^{\bar p}(\Omega)}.
\label{E2.3}
\end{equation}
Consequently, if $\gamma<\infty$ there exists $M_\gamma >0 $ such that
\begin{equation}
\|y_u\|_\infty + \|\varphi_u\|_\infty \le M_\gamma \quad \forall u \in \uad. \label{E2.4}
\end{equation}

 Let us analyze the cost functional. First, we distinguish two parts in $J$. We set $J(u) = F(u) + \alpha\|u\|^2_{L^2(\Omega)} + \beta\|u\|_0$ with
\[
 F(u) = \int_\Omega L(x,y_u(x))\dx.
\]
Concerning the function $F:L^2(\Omega) \longrightarrow \mathbb{R}$, we have that it is of class $C^2$ and the first and second derivatives are given by
\begin{equation}
F'(u)v = \int_\Omega\varphi(x) v(x)\dx, \label{E2.5}
\end{equation}
and
\begin{equation}
F''(u)(v_1,v_2) = \int_\Omega\left(\frac{\partial^2L}{\partial y^2}(x,y(x)) - \varphi(x)\frac{\partial^2a}{\partial y^2}(x,y(x))\right)z_{v_1}(x)z_{v_2}(x)\dx,
\label{E2.6}
\end{equation}
where $y$ is the state associated with $u$, solution of \eqref{E1.1}, $\varphi$ is the adjoint state, solution of \eqref{E2.2}, and
$z_{v_i} = S'(u)v_i$ are the solution of \eqref{E2.1} for $v = v_i$, $i = 1, 2$. In the sequel we will use the identification $F'(u) = \varphi_u$ as an $L^2(\Omega)$ element.

\subsection*{Properties of $\|\cdot\|_0$ and existence of solutions}

For ease of presentation, let us define function $|\cdot|_0:\R\to\R$ by
\[
 |r|_0:= \begin{cases} 1 & \text{ if } r\ne 0,\\
          0 & \text{ if } r=0.
         \end{cases}
\]
Then, clearly $\|u\|_0 = \int_\Omega |u(x)|_0 \dx$ holds.
The function $|\cdot|_0:\R\to\R$ is discontinuous and lower semicontinuous, which implies
that $u \mapsto \|u\|_0$ is lower semicontinuous on $L^p(\Omega)$ with respect to the norm topology for all $p\in [1,\infty]$.
However, this mapping is not weakly lower semicontinuous on these $L^p(\Omega)$ spaces, see the example in \cite[Section 2.2]{Wachsmuth2019}.
In particular, the lack of weakly lower semicontinuity implies that the direct method of the calculus of variations cannot
be applied to prove existence of solutions.
Actually, the problem has no solution in general.
For an explicit example of such a situation see \cite[Section 4.5]{Wachsmuth2019}, where the state equation is a linear elliptic equation with Neumann boundary conditions.

This question will be addressed in Section \ref{S4}, where we provide
sufficient conditions for the existence of local solutions of \Pb.

\subsection*{Lipschitz estimates of $F'$ and $F''$ with respect to $z_v$}

In the subsequent second-order analysis, we will frequently
need the following technical results.

\begin{theorem}\label{T2.1}
Given $\bar u \in L^2(\Omega)$, there exists $\rho > 0$ and $C > 0$ such that
\[
\| F'(u) - F'(\bar u) \|_{L^2(\Omega)}\le C \|z_{u-\bar u}\|_{L^2(\Omega)}\quad \forall u \in \bar B_\rho(\bar u).
\]
\end{theorem}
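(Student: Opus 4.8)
The plan is to exploit the variational characterizations \eqref{E2.2} of the adjoint states together with the identification $F'(u)=\varphi_u$. Write $y = y_u$, $\bar y = y_{\bar u}$, $\varphi = \varphi_u$, $\bar\varphi = \varphi_{\bar u}$, and set $z = z_{u-\bar u}$, i.e.\ the solution of \eqref{E2.1} at $\bar u$ with right-hand side $u-\bar u$. The first step is to localize: choosing $\rho>0$ small guarantees, via the estimate \eqref{E2.3}, a uniform bound $\|y_u\|_{L^\infty(\Omega)} + \|\varphi_u\|_{L^\infty(\Omega)} \le M$ for all $u \in \bar B_\rho(\bar u)$, so that all the constants $C_{a,M}$, $C_{L,M}$ from (A2)--(A3) are available and fixed on this ball.

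The second step is to derive the PDE satisfied by $\varphi - \bar\varphi$. Subtracting the two adjoint equations and adding and subtracting cross terms, $\varphi - \bar\varphi$ solves, in the weak sense,
\[
A^*(\varphi-\bar\varphi) + \frac{\partial a}{\partial y}(x,y)(\varphi-\bar\varphi)
= \Big(\frac{\partial L}{\partial y}(x,y) - \frac{\partial L}{\partial y}(x,\bar y)\Big)
- \Big(\frac{\partial a}{\partial y}(x,y) - \frac{\partial a}{\partial y}(x,\bar y)\Big)\bar\varphi
\]
with homogeneous Dirichlet data. By the mean value theorem applied in the $y$-variable and the Lipschitz-type bounds on $\partial_y L$ (bounded by $C_{L,M}$ after one derivative has been taken, since the second derivative is bounded) and on $\partial_y a$ (second derivative bounded by $C_{a,M}$), together with $\|\bar\varphi\|_{L^\infty(\Omega)}\le M$, the right-hand side is bounded in absolute value by $C\,|y-\bar y|$ pointwise a.e., for a constant $C$ depending only on $M$. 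Hence, by the standard $H_0^1$ a priori estimate for the elliptic operator $A^* + \partial_y a(x,y)\,\cdot$ (the zero-order term being nonnegative by (A2), so coercivity holds with constant $\Lambda_A$), $\|\varphi - \bar\varphi\|_{L^2(\Omega)} \le C\,\|y - \bar y\|_{L^2(\Omega)}$.

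The third step is to compare $y - \bar y$ with $z$. Both solve linear elliptic equations: $z$ solves \eqref{E2.1} at $\bar u$, while $y - \bar y$ solves, after the analogous subtraction,
\[
A(y-\bar y) + \frac{\partial a}{\partial y}(x,\bar y)(y-\bar y)
= (u-\bar u) - \Big(a(x,y) - a(x,\bar y) - \frac{\partial a}{\partial y}(x,\bar y)(y - \bar y)\Big),
\]
so the difference $w := (y-\bar y) - z$ satisfies the same operator applied to $w$ equals minus the second-order Taylor remainder of $a(x,\cdot)$, which is bounded a.e.\ by $\tfrac12 C_{a,M}\,|y-\bar y|^2$. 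Using the elliptic estimate in $L^2$ and then the embedding $H_0^1(\Omega)\hookrightarrow L^6(\Omega)$ (valid for $n\le 3$) to handle $|y-\bar y|^2$, one gets $\|w\|_{L^2(\Omega)} \le C\,\|y-\bar y\|_{L^\infty(\Omega)}\|y-\bar y\|_{L^2(\Omega)}$, hence $\|y-\bar y\|_{L^2(\Omega)} \le \|z\|_{L^2(\Omega)} + C\,\|y-\bar y\|_{L^\infty(\Omega)}\|y-\bar y\|_{L^2(\Omega)}$. Shrinking $\rho$ so that the Lipschitz dependence $u\mapsto y_u$ into $L^\infty(\Omega)$ makes $C\,\|y-\bar y\|_{L^\infty(\Omega)} \le \tfrac12$ on $\bar B_\rho(\bar u)$, we absorb the last term and conclude $\|y-\bar y\|_{L^2(\Omega)} \le 2\|z\|_{L^2(\Omega)}$. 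Chaining the three steps gives $\|F'(u) - F'(\bar u)\|_{L^2(\Omega)} = \|\varphi - \bar\varphi\|_{L^2(\Omega)} \le C\,\|z_{u-\bar u}\|_{L^2(\Omega)}$.

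The only genuinely delicate point is the third step: the naive estimate would bound $\|y-\bar y\|_{L^2(\Omega)}$ by $\|u-\bar u\|_{L^2(\Omega)}$, which is \emph{not} what we want; the whole value of the statement is that the right-hand side is $\|z_{u-\bar u}\|_{L^2(\Omega)}$, which can be much smaller when $u-\bar u$ oscillates. So one must genuinely compare $y-\bar y$ to $z$ rather than to $u-\bar u$, and the quadratic remainder of $a$ is exactly what makes this work, at the cost of localizing to a small ball so the remainder is a contraction-sized perturbation.
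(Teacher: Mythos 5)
Your proof is correct and follows essentially the same route as the paper: localize to a ball where $\|y_u\|_{L^\infty}+\|\varphi_u\|_{L^\infty}$ is uniformly bounded, subtract the adjoint equations to get $\|\varphi_u-\bar\varphi\|_{L^2(\Omega)}\le C\|y_u-\bar y\|_{L^2(\Omega)}$, and then compare $y_u-\bar y$ with $z_{u-\bar u}$. The only difference is that where the paper simply cites \cite[Corollary 2.8]{Casas2012} for the estimate $\|y_u-\bar y\|_{L^2(\Omega)}\le 2\|z_{u-\bar u}\|_{L^2(\Omega)}$ on a small ball, you prove it directly via the second-order Taylor remainder of $a(x,\cdot)$ and an absorption argument after shrinking $\rho$ — which is essentially the argument behind the cited result.
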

\begin{proof}
Let us take $\rho \le 1$ to be fixed later. Let $u \in \bar B_\rho(\bar u) \subset \bar B_1(\bar u)$. Then, subtracting the equations satisfied by $y_u$ and $\bar y = y_{\bar u}$ we get with the mean value theorem
\[
A(y_u - \bar y) + \frac{\partial a}{\partial y}(x,y_\theta)(y_u - \bar y) = u - \bar u,
\]
where $y_\theta = \bar y + \theta(y_u - \bar y)$ for some measurable function $\theta:\Omega \longrightarrow [0,1]$. From the above equation we get
\begin{equation}
\|y_u - \bar y\|_{L^2(\Omega)} \le C_1\|u - \bar u\|_{L^2(\Omega)} \le C_1\rho.
\label{E2.7}
\end{equation}
On the other side, subtracting the equations satisfied by the adjoint states $\varphi_u$ and $\bar\varphi$ we get
\begin{equation}
A^*(\varphi_u - \bar\varphi) + \frac{\partial a}{\partial y}(x,\bar y)(\varphi_u - \bar\varphi) = \Big[\frac{\partial L}{\partial y}(x,y_u) - \frac{\partial L}{\partial y}(x,\bar y)\Big] + \big[\frac{\partial a}{\partial y}(x,\bar y) - \frac{\partial a}{\partial y}(x,y_u)\Big]\varphi_u.
\label{E2.8}
\end{equation}
Now, from \eqref{E2.3} we infer the existence of a constant $M$ such that
\[
\|y_u\|_{L^\infty(\Omega)} + \|\varphi_u\|_{L^\infty(\Omega)} \le M\quad \forall u \in \bar B_1(\bar u).
\]
Hence, using the Assumption (A2) and (A3) and the mean value theorem, we deduce from \eqref{E2.8} the existence of a constant $C_2$ such that
\begin{equation}
\| F'(u) - F'(\bar u) \|_{L^2(\Omega)} = \|\varphi_u - \bar\varphi\|_{L^2(\Omega)} \le C_2\|y_u - \bar y\|_{L^2(\Omega)}\quad \forall u \in \bar B_1(\bar u).
\label{E2.9}
\end{equation}
Arguing as in \cite[Corollary 2.8, (2.27)]{Casas2012}, there is $\rho>0$ such that
$
\|y_u-\bar y\|_{L^2(\Omega)} \le 2\|z_{u-\bar u}\|_{L^2(\Omega)}
$
for all $u\in\bar  B_\rho(\bar u)$.
% Finally, subtracting the equations satisfied by $y_u$, $\bar y$ and $z_{u - \bar u}$ and making a second order Taylor expansion of $a(x,y_u)$ we obtain
% \[
% A(y_u-\bar y-z_{u - \bar u}) + \frac{\partial a}{\partial y}(x,\bar y)(y_u-\bar y-z_{u-\bar u}) = -\frac{1}{2}\frac{\partial^2a}{\partial y^2}(x,y_\vartheta)(y_u - \bar y)^2.
% \]
% Then, we get with Assumption (A2) and \eqref{E2.7} the estimate
% \begin{align*}
% &\|y_u-\bar y-z_{u-\bar u}\|_{L^2(\Omega)} \le C_3\Big\| -\frac{1}{2}\frac{\partial^2a}{\partial y^2}(x,y_\vartheta)(y_u - \bar y)^2\Big\|_{L^1(\Omega)}\\
% & \le C_4\|y_u - \bar y\|^2_{L^2(\Omega)} \le C_4C_1\rho\|y_u - \bar y\|_{L^2(\Omega)};
% \end{align*}
% see Theorem \ref{T2.3} below. By selecting $\rho \le \frac{1}{2C_4C_1}$, it yields
% \[
% \|y_u-\bar y\|_{L^2(\Omega)} \le \|y_u-\bar y-z_{u-\bar u}\|_{L^2(\Omega)} + \|z_{u-\bar u}\|_{L^2(\Omega)} \le \frac{1}{2}\|y_u-\bar y\|_{L^2(\Omega)} + \|z_{u-\bar u}\|_{L^2(\Omega)}.
% \]
% Therefore,
% $
% \|y_u-\bar y\|_{L^2(\Omega)} \le 2\|z_{u-\bar u}\|_{L^2(\Omega)}.
% $
Combining this inequality with \eqref{E2.9} the statement of the theorem follows.
\end{proof}

\begin{theorem}\label{T2.2}
For all $\epsilon>0$ there is $\rho>0$ such that
\[
 \left| (F''(u) -F''(\bar u))(u-\bar u)^2 \right| \le \epsilon \|z_{u-\bar u}\|_{L^2(\Omega)}^2
\]
for all $u\in B_\rho(\bar u)$.
\end{theorem}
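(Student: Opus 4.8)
The plan is to reduce the claim to $L^2$-estimates for the two linearized states and $L^\infty$-estimates for the coefficient functions appearing in \eqref{E2.6}. First fix $\rho\le1$; then by \eqref{E2.3} there is $M>0$ with $\|y_u\|_{L^\infty(\Omega)}+\|\varphi_u\|_{L^\infty(\Omega)}\le M$ for all $u\in\bar B_1(\bar u)$, so the constants $C_{a,M}$, $C_{L,M}$ of (A2)--(A3) are fixed on $\bar B_1(\bar u)$. Write $v=u-\bar u$, let $z:=z_{u-\bar u}=S'(\bar u)v$ be the linearized state at $\bar u$, let $\tilde z:=S'(u)v$, and set $g_u:=\frac{\partial^2L}{\partial y^2}(\cdot,y_u)-\varphi_u\frac{\partial^2a}{\partial y^2}(\cdot,y_u)$ and likewise $g_{\bar u}$. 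These coefficient functions satisfy $\|g_u\|_{L^\infty(\Omega)}\le C$ uniformly on $\bar B_1(\bar u)$, and by \eqref{E2.6}
\[
(F''(u)-F''(\bar u))(u-\bar u)^2=\int_\Omega g_u(\tilde z^2-z^2)\dx+\int_\Omega(g_u-g_{\bar u})z^2\dx .
\]
I would estimate the two terms separately, bounding each by $\tfrac{\epsilon}{2}\|z\|_{L^2(\Omega)}^2$ for $\rho$ small.

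For the first term, subtracting the equations \eqref{E2.1} defining $\tilde z$ (coefficient $\frac{\partial a}{\partial y}(\cdot,y_u)$) and $z$ (coefficient $\frac{\partial a}{\partial y}(\cdot,\bar y)$) gives
\[
A(\tilde z-z)+\frac{\partial a}{\partial y}(\cdot,y_u)(\tilde z-z)=\Bigl(\frac{\partial a}{\partial y}(\cdot,\bar y)-\frac{\partial a}{\partial y}(\cdot,y_u)\Bigr)z .
\]
Since $\frac{\partial a}{\partial y}\ge0$, the standard energy estimate in $H_0^1(\Omega)$ together with $\|h\|_{H^{-1}(\Omega)}\le C\|h\|_{L^2(\Omega)}$ yields $\|\tilde z-z\|_{L^2(\Omega)}\le C\bigl\|\frac{\partial a}{\partial y}(\cdot,\bar y)-\frac{\partial a}{\partial y}(\cdot,y_u)\bigr\|_{L^\infty(\Omega)}\|z\|_{L^2(\Omega)}$. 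By the mean value theorem and (A2) the coefficient difference is $\le C_{a,M}\|y_u-\bar y\|_{L^\infty(\Omega)}$, and, exactly as in the proof of Theorem \ref{T2.1} (the Stampacchia $L^\infty$-estimate for the linear equation satisfied by $y_u-\bar y$, whose right-hand side $u-\bar u$ lies in $L^2(\Omega)$ with $2>n/2$ since $n\le3$), $\|y_u-\bar y\|_{L^\infty(\Omega)}\le C\|u-\bar u\|_{L^2(\Omega)}\le C\rho$. Hence $\|\tilde z-z\|_{L^2(\Omega)}\le C\rho\|z\|_{L^2(\Omega)}$, so $\|\tilde z\|_{L^2(\Omega)}\le2\|z\|_{L^2(\Omega)}$ once $\rho$ is small, and
\[
\Bigl|\int_\Omega g_u(\tilde z^2-z^2)\dx\Bigr|\le\|g_u\|_{L^\infty(\Omega)}\|\tilde z-z\|_{L^2(\Omega)}\|\tilde z+z\|_{L^2(\Omega)}\le C\rho\|z\|_{L^2(\Omega)}^2 .
\]

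For the second term it suffices to show $\|g_u-g_{\bar u}\|_{L^\infty(\Omega)}\to0$ as $\rho\to0$. I would decompose
\[
g_u-g_{\bar u}=\Bigl(\tfrac{\partial^2L}{\partial y^2}(\cdot,y_u)-\tfrac{\partial^2L}{\partial y^2}(\cdot,\bar y)\Bigr)-\varphi_u\Bigl(\tfrac{\partial^2a}{\partial y^2}(\cdot,y_u)-\tfrac{\partial^2a}{\partial y^2}(\cdot,\bar y)\Bigr)-(\varphi_u-\bar\varphi)\,\tfrac{\partial^2a}{\partial y^2}(\cdot,\bar y) ;
\]
the first two bracketed terms tend to $0$ in $L^\infty(\Omega)$ by the uniform continuity of $\partial^2_{yy}L$ and $\partial^2_{yy}a$ assumed in (A3) and (A2), since $\|y_u-\bar y\|_{L^\infty(\Omega)}\to0$, and the last term is bounded by $C_{a,M}\|\varphi_u-\bar\varphi\|_{L^\infty(\Omega)}$, where $\|\varphi_u-\bar\varphi\|_{L^\infty(\Omega)}\le C\|y_u-\bar y\|_{L^\infty(\Omega)}\le C\rho$ by subtracting the adjoint equations as in \eqref{E2.8} and applying the $L^\infty$-estimate once more. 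Thus $\bigl|\int_\Omega(g_u-g_{\bar u})z^2\dx\bigr|\le\|g_u-g_{\bar u}\|_{L^\infty(\Omega)}\|z\|_{L^2(\Omega)}^2$ is made $\le\tfrac{\epsilon}{2}\|z\|_{L^2(\Omega)}^2$, and choosing $\rho$ small enough that the first term is likewise $\le\tfrac{\epsilon}{2}\|z\|_{L^2(\Omega)}^2$ completes the argument. The main obstacle is the first term: one must control the gap between the linearizations $S'(u)$ and $S'(\bar u)$ in direction $u-\bar u$ by $\|z_{u-\bar u}\|_{L^2(\Omega)}$ itself rather than by $\|u-\bar u\|_{L^2(\Omega)}$, which may be far larger; this is why the difference equation for $\tilde z-z$ must be written with $z$, not $u-\bar u$, on the right-hand side.
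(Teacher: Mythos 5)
Your proof is correct. It differs from the paper only in that the paper does not argue at all: its proof of Theorem \ref{T2.2} is a one-line appeal to \cite[Lemma 2.7]{Casas2012} with $\uad$ there replaced by $\bar B_1(\bar u)$, so your argument is essentially a self-contained reconstruction of what lies behind that citation. You isolate the two right ingredients. First, since \eqref{E2.6} evaluates the linearized states at the point where $F''$ is taken, the difference $(F''(u)-F''(\bar u))(u-\bar u)^2$ mixes $\tilde z=S'(u)(u-\bar u)$ with $z=S'(\bar u)(u-\bar u)$, and you correctly write the equation for $\tilde z-z$ with $z$ (not $u-\bar u$) as data, so that the $L^\infty$-smallness of $\frac{\partial a}{\partial y}(\cdot,y_u)-\frac{\partial a}{\partial y}(\cdot,\bar y)$ --- available because the Stampacchia estimate gives $\|y_u-\bar y\|_{L^\infty(\Omega)}\le C\|u-\bar u\|_{L^2(\Omega)}$, using $2>n/2$ for $n\le 3$ and $\frac{\partial a}{\partial y}\ge 0$ --- yields $\|\tilde z-z\|_{L^2(\Omega)}\le C\rho\|z\|_{L^2(\Omega)}$ rather than a bound by $\|u-\bar u\|_{L^2(\Omega)}$, which would be useless here. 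Second, the coefficient difference $g_u-g_{\bar u}$ is handled through the modulus-of-continuity clauses in (A2)--(A3) together with the $L^\infty$-stability of the adjoint state, so it is absorbed against $\|z\|_{L^2(\Omega)}^2$. Your explicit route buys independence from the external reference and makes visible exactly where each hypothesis (the bounds $C_{a,M}$, $C_{L,M}$, the uniform continuity of the second derivatives, and the restriction $n\le 3$) enters; the paper's citation buys brevity, since the quoted lemma already packages these uniform estimates over a bounded control set such as $\bar B_1(\bar u)$.
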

\begin{proof}
It is a consequence of \cite[Lemma 2.7]{Casas2012} by selecting there $\uad = \bar B_1(\bar u)$.
\end{proof}

\begin{theorem}[{\cite[Lemma 2.6, (2.16)]{Casas2012}}]
\label{T2.3} Given $\bar u \in \uad$, there exists a constant $C_z$ such that
 \[
  \|z_v\|_{L^2(\Omega)} \le C_z \|v\|_{L^1(\Omega)}\quad \forall v \in L^1(\Omega).
 \]
\end{theorem}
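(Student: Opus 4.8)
The plan is to establish this bound by a duality (transposition) argument resting on the $L^\infty$-regularity of the \emph{linearized adjoint} equation: I would prove the estimate first for $v\in L^2(\Omega)$, where $z_v=S'(\bar u)v$ is already defined, and then pass to general $v\in L^1(\Omega)$ by density.

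First I would fix $\bar u\in\uad$, write $\bar y=S(\bar u)\in H_0^1(\Omega)\cap C(\bar\Omega)$, and note that by (A2) the coefficient $d:=\frac{\partial a}{\partial y}(\cdot,\bar y)$ lies in $L^\infty(\Omega)$ with $d\ge 0$ a.e. For $g\in L^2(\Omega)$ I would introduce $w_g\in H_0^1(\Omega)$, the solution of the adjoint linearized problem $A^*w_g+d\,w_g=g$ in $\Omega$, $w_g=0$ on $\Gamma$. Since $1\le n\le 3$ gives $2>n/2$, Stampacchia's $L^\infty$-estimates — the same tool already used to obtain \eqref{E2.3} — apply with right-hand side in $L^2(\Omega)$ and furnish a constant $C_\infty>0$, independent of $g$, with $\|w_g\|_{L^\infty(\Omega)}\le C_\infty\|g\|_{L^2(\Omega)}$.

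Next, for $v\in L^2(\Omega)$ and $z_v=S'(\bar u)v$ the solution of \eqref{E2.1} with $u=\bar u$, I would test the equation for $z_v$ against $w_g$ and the equation for $w_g$ against $z_v$: the contributions of the principal part of $A$ and of the zeroth-order term $d$ coincide, so subtracting yields $\int_\Omega z_v\,g\dx=\int_\Omega v\,w_g\dx$. This gives $\bigl|\int_\Omega z_v\,g\dx\bigr|\le\|v\|_{L^1(\Omega)}\|w_g\|_{L^\infty(\Omega)}\le C_\infty\|v\|_{L^1(\Omega)}\|g\|_{L^2(\Omega)}$, and taking the supremum over $g$ in the unit ball of $L^2(\Omega)$ produces $\|z_v\|_{L^2(\Omega)}\le C_\infty\|v\|_{L^1(\Omega)}$ for all $v\in L^2(\Omega)$, so $C_z=C_\infty$ works. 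Finally, since $L^2(\Omega)$ is dense in $L^1(\Omega)$ and $v\mapsto z_v$ is Lipschitz from $(L^2(\Omega),\|\cdot\|_{L^1(\Omega)})$ into $L^2(\Omega)$ by this bound, I would extend it uniquely to a bounded linear operator $L^1(\Omega)\to L^2(\Omega)$ with the same constant; the extension coincides with the very-weak (transposition) solution of \eqref{E2.1}, and the inequality then holds for every $v\in L^1(\Omega)$.

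The main obstacle is the uniform $L^\infty$-estimate $\|w_g\|_{L^\infty(\Omega)}\le C_\infty\|g\|_{L^2(\Omega)}$: this is where the dimension restriction $n\le 3$ enters (so that $2>n/2$ and an $L^2$ right-hand side already suffices for boundedness) and where the sign and $L^\infty$-bound on $d$ are needed to keep the constant independent of $g$. The remaining steps — the weak-formulation identity and the density extension — are routine.
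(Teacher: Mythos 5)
Your argument is correct. Note that the paper itself gives no proof of this statement: it is quoted directly from \cite[Lemma~2.6, (2.16)]{Casas2012}, and the proof there is precisely the duality/transposition argument you describe — testing the linearized state equation against the solution $w_g$ of the adjoint linearized equation with datum $g\in L^2(\Omega)$, using Stampacchia's $L^\infty$-estimate (valid since $n\le 3$ gives $2>n/2$, with $d=\frac{\partial a}{\partial y}(\cdot,\bar y)\ge 0$ bounded) to get $\|w_g\|_{L^\infty(\Omega)}\le C\|g\|_{L^2(\Omega)}$, and then extending to $v\in L^1(\Omega)$ by density, where $z_v$ is understood as the transposition solution. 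So your proposal matches the intended proof; no gaps.
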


% \begin{proof}
% Since $L^1(\Omega)$ is a subspace of the space of real and regular Borel measures in $\Omega$, we can apply the well known results for measures to deduce that $z_v$ satisfies
% $ %\[
% \|y\|_{W^{1,s}(\Omega)} \le C_s\|v\|_{L^1(\Omega)}
% $ %\]
% for every $s \in [1,\frac{n}{n-1})$ and some constant $C_s$ independent of $v$; see, for instance, \cite{AR97,Casas93,MeyerPanizziSchiela2011}.
% \end{proof}

\section{Necessary optimality conditions}
\label{S3}

Let $\bar u$ be a local minimum of \Pb in the sense of $L^2(\Omega)$. Let us define $\bar y:=y_{\bar u}$.
We will investigate necessary optimality conditions. The first step is the well-known
maximum principle, which can be considered a first-order necessary optimality condition.

\subsection{Pontryagin's maximum principle}

Let us define the Hamiltonian $H:\Omega \times \R^3\to\R$  associated with \Pb by
\[
 H(x,y,u,\varphi):= L(x,y) + \varphi u +  \frac\alpha2 u^2 + \beta|u|_0.
\]

\begin{theorem}[{\cite[Theorem 2]{Casas1994}}]
\label{T3.1}
 Let $\bar u$  be locally optimal for \Pb. Then there exists a uniquely determined
 adjoint state $\bar\varphi:=\varphi_{\bar u}$ solving the adjoint equation \eqref{E2.2} such that
 for almost all $x\in \Omega$
\begin{equation}\label{E3.1}
  H(x, \bar y(x), \bar u(x) , \bar\varphi(x)) \le
  H(x, \bar y(x), u , \bar\varphi(x))
  \quad \forall u\in [-\gamma,+\gamma]
\end{equation}
 is satisfied.
\end{theorem}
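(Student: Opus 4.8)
The plan is to prove this pointwise minimum principle by a diffuse (needle) perturbation argument, using that the non-smooth and non-convex parts $\frac{\alpha}{2}u^{2}$ and $\beta|u|_{0}$ enter $J$ only through integral functionals of purely local type, while the composite term $F$ is of class $C^{2}$ and thus admits a first-order expansion along perturbations that are small in $L^{1}(\Omega)$. The uniqueness of the adjoint state is immediate: with $y=\bar y$ fixed, \eqref{E2.2} is linear in $\varphi$, its zeroth-order coefficient $\frac{\partial a}{\partial y}(\cdot,\bar y)$ is bounded and nonnegative, and its right-hand side $\frac{\partial L}{\partial y}(\cdot,\bar y)$ lies in $L^{\bar p}(\Omega)$, so there is exactly one solution $\bar\varphi=\varphi_{\bar u}\in H_{0}^{1}(\Omega)\cap C(\bar\Omega)$.

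For the minimum principle I would fix an arbitrary measurable set $E\subseteq\Omega$ and an arbitrary $v\in\uad$, and set $f_{1}=\bar\varphi(v-\bar u)$, $f_{2}=\frac{\alpha}{2}(v^{2}-\bar u^{2})$, $f_{3}=\beta(|v|_{0}-|\bar u|_{0})$ and $f_{4}=|v-\bar u|^{2}$; all of these lie in $L^{1}(\Omega)$ since $\bar\varphi,v\in L^{\infty}(\Omega)$ and $\bar u\in L^{2}(\Omega)$. By Lyapunov's convexity theorem applied to the nonatomic vector measure $A\mapsto(|A\cap E|,\int_{A\cap E}f_{1}\dx,\dots,\int_{A\cap E}f_{4}\dx)$, for every $\rho\in[0,1]$ there exists a measurable $E_{\rho}\subseteq E$ with $|E_{\rho}|=\rho|E|$ and $\int_{E_{\rho}}f_{i}\dx=\rho\int_{E}f_{i}\dx$ for $i=1,\dots,4$. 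Set $u_{\rho}:=\bar u+\chi_{E_{\rho}}(v-\bar u)$; then $u_{\rho}\in\uad$, $\|u_{\rho}-\bar u\|_{L^{2}(\Omega)}^{2}=\rho\int_{E}f_{4}\dx\to0$, and, by Cauchy--Schwarz, $\|u_{\rho}-\bar u\|_{L^{1}(\Omega)}\le|E_{\rho}|^{1/2}\|u_{\rho}-\bar u\|_{L^{2}(\Omega)}=O(\rho)$.

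Next I would expand $J(u_{\rho})-J(\bar u)$. The two local terms are exact: $\frac{\alpha}{2}\|u_{\rho}\|_{L^{2}(\Omega)}^{2}=\frac{\alpha}{2}\|\bar u\|_{L^{2}(\Omega)}^{2}+\rho\int_{E}f_{2}\dx$ and $\beta\|u_{\rho}\|_{0}=\beta\|\bar u\|_{0}+\rho\int_{E}f_{3}\dx$. For $F$ I write $F(u_{\rho})-F(\bar u)=\int_{\Omega}\bar\varphi(u_{\rho}-\bar u)\dx+R_{\rho}=\rho\int_{E}f_{1}\dx+R_{\rho}$ with $R_{\rho}=\int_{0}^{1}\big(F'(\bar u+t(u_{\rho}-\bar u))-F'(\bar u)\big)(u_{\rho}-\bar u)\,\mathrm{d}t$, and estimate, once $\rho$ is small enough that $u_{\rho}$ lies in the ball of Theorem \ref{T2.1}, $|R_{\rho}|\le\frac{C}{2}\|z_{u_{\rho}-\bar u}\|_{L^{2}(\Omega)}\|u_{\rho}-\bar u\|_{L^{2}(\Omega)}\le\frac{CC_{z}}{2}\|u_{\rho}-\bar u\|_{L^{1}(\Omega)}\|u_{\rho}-\bar u\|_{L^{2}(\Omega)}=O(\rho^{3/2})$, using Theorems \ref{T2.1} and \ref{T2.3} and the linearity of $v\mapsto z_{v}$. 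Since $\bar u$ is an $L^{2}$-local minimum and $u_{\rho}\to\bar u$ in $L^{2}(\Omega)$, for small $\rho$ we get $0\le J(u_{\rho})-J(\bar u)=\rho\int_{E}(f_{1}+f_{2}+f_{3})\dx+o(\rho)$; dividing by $\rho$ and letting $\rho\downarrow0$ gives $\int_{E}\big(H(x,\bar y,v,\bar\varphi)-H(x,\bar y,\bar u,\bar\varphi)\big)\dx\ge0$ (the $L(x,\bar y)$ contributions cancel). As $E$ is arbitrary, $H(x,\bar y(x),\bar u(x),\bar\varphi(x))\le H(x,\bar y(x),v(x),\bar\varphi(x))$ for a.e.\ $x$, for each fixed $v\in\uad$. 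Applying this with $v$ equal to each constant in a countable dense subset $D$ of $[-\gamma,\gamma]$ with $0\in D$, and discarding the countable union of exceptional null sets, yields \eqref{E3.1} for all $u\in D$ and a.e.\ $x$; finally, since $t\mapsto\bar\varphi(x)t+\frac{\alpha}{2}t^{2}+\beta|t|_{0}$ is continuous on $[-\gamma,\gamma]\setminus\{0\}$ and $u=0$ is already covered, \eqref{E3.1} extends to all $u\in[-\gamma,\gamma]$.

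The main obstacle is making the remainder $R_{\rho}$ coming from the non-local term $F$ genuinely $o(\rho)$: a plain Fréchet expansion of $F$ in $L^{2}(\Omega)$ only gives $o(\|u_{\rho}-\bar u\|_{L^{2}(\Omega)})=o(\rho^{1/2})$, which is too weak to survive division by $\rho$. The remedy is precisely to exploit the smoothing of the control-to-state map, i.e.\ Theorems \ref{T2.1} and \ref{T2.3}, which bound differences of $F'$ and the linearized states $z_{v}$ by $\|v\|_{L^{1}(\Omega)}$, together with the fact that the Lyapunov construction keeps $\|u_{\rho}-\bar u\|_{L^{1}(\Omega)}=O(\rho)$. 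Everything else — the exact behaviour of the local terms, the localization over $E$, and the passage from constant test controls to all of $[-\gamma,\gamma]$ — is routine. Alternatively, one may simply verify that the data (A1)--(A3) fit the framework of \cite[Theorem 2]{Casas1994} and invoke that result directly.
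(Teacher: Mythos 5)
Your proof is correct, but it takes a genuinely different route from the paper: the paper offers no argument at all for Theorem \ref{T3.1}, it simply invokes \cite[Theorem 2]{Casas1994} (the option you mention in your last sentence), whereas you give a self-contained diffuse-perturbation proof. Your key observation is the right one for this problem: the non-smooth terms $\frac\alpha2 u^2$ and $\beta|u|_0$ are purely local integrands, so the Lyapunov-type choice of $E_\rho$ makes their contribution, together with the first-order term $\int_\Omega\bphi(u_\rho-\bar u)\dx$, exactly linear in $\rho$, while the only genuinely nonlinear/nonlocal part $F$ is handled through the paper's own Theorems \ref{T2.1} and \ref{T2.3}, which turn the remainder into $O(\|u_\rho-\bar u\|_{L^1}\|u_\rho-\bar u\|_{L^2})=O(\rho^{3/2})=o(\rho)$ — this is precisely the step where a naive $L^2$ Taylor expansion would only give $o(\rho^{1/2})$ and fail, and you identify and repair that correctly. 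The passage from arbitrary $E$ to a pointwise inequality, the countable dense set $D\ni 0$ of constant test controls, and the continuity of $u\mapsto\bphi(x)u+\frac\alpha2 u^2+\beta|u|_0$ away from $u=0$ close the argument; note that constants are admissible even when $\gamma=+\infty$, so the case distinction in the definition of $\uad$ causes no trouble, and uniqueness of $\bar\varphi$ is indeed immediate from the linearity of \eqref{E2.2}. What each approach buys: the citation is shorter and rests on a result proved (via Ekeland's variational principle and spike/diffuse perturbations) in a more general setting, while your argument is elementary, avoids Ekeland's principle entirely because there are no state constraints, and has the aesthetic advantage of reusing the quantitative estimates (Theorems \ref{T2.1}, \ref{T2.3}) that the paper needs anyway for its second-order analysis.
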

% \begin{proof}
%  For a proof, we refer to \cite[Theorem 2]{Casas1994}.
% \end{proof}

Let us note that the maximum principle implies a certain sparsity structure of the optimal controls.
To this end, let us first study a scalar optimization problem.

\begin{lemma}\label{L3.2}
Let $\varphi\in \R$ be given. Let $u^*$ be a global minimum of
 \[
  \min_{|u| \le \gamma} \varphi \cdot u +  \frac\alpha2 u^2 + \beta|u|_0.
 \]
 If $\alpha =0$ then
 \[
  (u^*,\varphi) \in  \{-\gamma\} \times \left[+\frac{\beta}{\gamma},+\infty\right)
  \ \cup\ \{0\} \times \left[-\frac{\beta}{\gamma},+\frac{\beta}{\gamma}\right]
  \ \cup\ \{\gamma\} \times \left(-\infty,-\frac{\beta}{\gamma}\right].
\]
If $\alpha>0$ then
one of the conditions of the following conditions is satisfied:
% {\color{red}
\begin{enumerate}
 \item\label{lem32_1} If $|\varphi| > \frac{\alpha\gamma}{2} + \frac{\beta}{\gamma}$ and $|\varphi| \ge \alpha\gamma \Rightarrow u^* = -\text{sign}(\varphi)\gamma$,
 \item\label{lem32_2} If $|\varphi| = \frac{\alpha\gamma}{2} + \frac{\beta}{\gamma}$ and $|\varphi| \ge \alpha\gamma \Rightarrow u^* = -\text{sign}(\varphi)\gamma$ {or} $u^* = 0$,
 \item\label{lem32_3} If $|\varphi| < \frac{\alpha\gamma}{2} + \frac{\beta}{\gamma}$ and $|\varphi| \ge \alpha\gamma \Rightarrow u^* = 0$,
 \item\label{lem32_4} If $\sqrt{2\alpha\beta} < |\varphi| < \alpha\gamma \Rightarrow u^* = -\frac{\varphi}{\alpha}$,
\item\label{lem32_5} If $\sqrt{2\alpha\beta} = |\varphi| < \alpha\gamma \Rightarrow u^* = -\frac{\varphi}{\alpha}$ {or} $u^* = 0$,
 \item\label{lem32_6} If $|\varphi| < \sqrt{2\alpha\beta} \Rightarrow u^* = 0$.
 \end{enumerate}
%\begin{enumerate}
% \item\label{lem32_1} $u^*=-\gamma$ and $\varphi \ge \max(\alpha\gamma,\frac{\alpha\gamma}2+\frac\beta\gamma)$,
% \item\label{lem32_2} $u^*=\gamma$ and $\varphi \le -\max(\alpha\gamma,\frac{\alpha\gamma}2+\frac\beta\gamma)$,
% \item\label{lem32_3} $u^*=-\frac\varphi\alpha$ and $ \sqrt{2\alpha\beta} \le |\varphi| \le \gamma\alpha$,
% \item\label{lem32_4} $u^*=0$  and $|\varphi| \le \sqrt{2\alpha\beta}$,
% \item\label{lem32_5} $u^*=0$, $\gamma\le\sqrt{2\alpha\beta}$, and $\sqrt{2\alpha\beta}\le |\varphi| \le \frac{\alpha\gamma}2+\frac\beta\gamma$.
%\end{enumerate}
In particular, $u^*=0 $ if $|\varphi|<\sqrt{2\alpha\beta}$ for $\alpha>0$ or $|\varphi|< \frac\beta\gamma $ for $\alpha=0$.
%
%  Then we have the following relations:
%
%  \begin{enumerate}
%   \item If $|\varphi|< \sqrt{2\alpha\beta}$ then $u^*=0$.
%   \item $|\varphi| = \sqrt{2\alpha\beta}$ then $u=0$ or $u=-\operatorname{sign}(\varphi) \sqrt{\frac{2\beta}\alpha}$.
%   \item If $|\varphi| > \sqrt{2\alpha\beta}$ if and only if $|u|>\sqrt{\frac{2\beta}\alpha}$.
%  \end{enumerate}
\end{lemma}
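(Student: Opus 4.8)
The plan is to analyze the scalar function $g(u) := \varphi u + \frac{\alpha}{2}u^2 + \beta|u|_0$ on the interval $[-\gamma,\gamma]$ by splitting the domain into the two regions where $|\cdot|_0$ is constant, namely $\{0\}$ and $[-\gamma,\gamma]\setminus\{0\}$, and comparing the minimal values. On $\{0\}$ the value is $g(0) = 0$. On the punctured interval the function is $u \mapsto \varphi u + \frac{\alpha}{2}u^2 + \beta$, a strictly convex parabola (if $\alpha>0$) or an affine function (if $\alpha=0$), whose unconstrained minimizer over $\R$ is $u = -\varphi/\alpha$ when $\alpha>0$. The global minimum of $g$ over $[-\gamma,\gamma]$ is then the smaller of $g(0)=0$ and the minimum of the shifted parabola/line over $[-\gamma,\gamma]$; the tie cases produce the "or $u^*=0$" alternatives.

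For the case $\alpha = 0$, I would compute directly: on the punctured interval $g$ is affine with slope $\varphi$, so its infimum over $[-\gamma,\gamma]$ is attained at $u = -\gamma\,\mathrm{sign}(\varphi)$ (taking $u=-\gamma$ if $\varphi>0$, $u=\gamma$ if $\varphi<0$, either endpoint if $\varphi=0$) with value $-\gamma|\varphi| + \beta$. Comparing with $g(0)=0$: if $-\gamma|\varphi|+\beta < 0$, i.e.\ $|\varphi| > \beta/\gamma$, the minimizer is $\pm\gamma$; if $|\varphi| < \beta/\gamma$ it is $0$; if $|\varphi| = \beta/\gamma$ both are optimal. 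This gives exactly the stated three-region description, and in particular $u^*=0$ whenever $|\varphi|<\beta/\gamma$.

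For $\alpha > 0$, I would distinguish whether the unconstrained vertex $-\varphi/\alpha$ lies inside $[-\gamma,\gamma]$, i.e.\ whether $|\varphi| < \alpha\gamma$ or $|\varphi| \ge \alpha\gamma$. When $|\varphi| \ge \alpha\gamma$, the parabola is monotone on $[-\gamma,\gamma]$ so its minimum on the punctured interval is at $u = -\gamma\,\mathrm{sign}(\varphi)$ with value $-\gamma|\varphi| + \frac{\alpha}{2}\gamma^2 + \beta$; comparing with $0$ yields the threshold $|\varphi| = \frac{\alpha\gamma}{2} + \frac{\beta}{\gamma}$ and hence cases \eqref{lem32_1}--\eqref{lem32_3} (with the equality tie in \eqref{lem32_2}). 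When $|\varphi| < \alpha\gamma$, the constrained minimum of the parabola over $[-\gamma,\gamma]$ is attained at the vertex $u = -\varphi/\alpha$ with value $-\frac{\varphi^2}{2\alpha} + \beta$; comparing with $0$ gives the threshold $\varphi^2 = 2\alpha\beta$, hence cases \eqref{lem32_4}--\eqref{lem32_6} (with the equality tie in \eqref{lem32_5}). Finally, the "in particular" assertion follows by inspecting cases \eqref{lem32_6} and the $\alpha=0$ analysis.

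The computations here are elementary one-variable calculus, so there is no serious analytic obstacle; the main point requiring care is bookkeeping — making sure the boundary/tie cases ($|\varphi|$ exactly equal to a threshold, or $\varphi=0$, or $|\varphi|$ exactly equal to $\alpha\gamma$) are assigned consistently, and that the two threshold formulas $\frac{\alpha\gamma}{2}+\frac{\beta}{\gamma}$ and $\sqrt{2\alpha\beta}$ correctly glue together at $|\varphi| = \alpha\gamma$ (one checks that $\frac{\alpha\gamma}{2}+\frac{\beta}{\gamma} \ge \alpha\gamma \iff \alpha\gamma \le \sqrt{2\alpha\beta}\cdot\!\sqrt{2}$ type relations, so exactly one of the two regimes governs the comparison at each $\varphi$). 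I would organize the proof as a short case distinction following the two dichotomies above, writing out the value of $g$ at the candidate minimizer in each case and comparing it with $g(0)=0$.
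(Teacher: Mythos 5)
Your argument is correct and, at its core, it is the natural scalar analysis: compare $g(0)=0$ with the minimum of the smooth branch $u\mapsto \varphi u+\frac\alpha2 u^2+\beta$ over the punctured interval, with the vertex/endpoint dichotomy at $|\varphi|=\alpha\gamma$ and the tie cases at the thresholds. The paper's own proof is much shorter because it only carries out the $\alpha=0$ case explicitly (reducing to the candidates $\{-\gamma,0,+\gamma\}$, exactly as you do) and for $\alpha>0$ it simply cites Lemma~3.5 of the reference Wachsmuth~2019, whereas you reprove that case in a self-contained way; this is a legitimate and arguably preferable route, at the cost of the case bookkeeping you describe. One small correction to your side remark about how the two thresholds glue: the precise relations are $\frac{\alpha\gamma}2+\frac\beta\gamma\ge\alpha\gamma \iff \sqrt{2\alpha\beta}\ge\alpha\gamma$ (not a factor $\sqrt2$), and the inequality actually needed for the unconditional case \ref{lem32_6} (and hence for the ``in particular'' claim when $|\varphi|\ge\alpha\gamma$) is $\sqrt{2\alpha\beta}\le\frac{\alpha\gamma}2+\frac\beta\gamma$, which the paper establishes via the identity $\frac{\alpha\gamma}2+\frac\beta\gamma-\sqrt{2\alpha\beta}=\frac12\bigl(\sqrt{\alpha\gamma}-\sqrt{2\beta/\gamma}\bigr)^2\ge0$; you should include this explicitly, since without it your endpoint comparison does not by itself yield $u^*=0$ when $|\varphi|<\sqrt{2\alpha\beta}$ but $|\varphi|\ge\alpha\gamma$. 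This is a one-line fix, not a structural gap.
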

\begin{proof}
Let $\alpha=0$. Then only the points $\{-\gamma,0,+\gamma\}$ are candidates for solutions of $\displaystyle\min_{|u| \le \gamma} \varphi \cdot u + \beta|u|_0$.
The claim follows by elementary computations.
The case $\alpha>0$ can be deduced from \cite[Lemma 3.5]{Wachsmuth2019}.
%
% Let $\alpha>0$, $\varphi\ge0$, $u_a\le -\sqrt{\frac{2\beta}\alpha}$.
% Since $0\in [u_a,u_b]$, it follows
% \[
%  \phi \cdot u^* +  \frac\alpha2 (u^*)^2 + \beta|u^*|_0 \le 0,
% \]
% which is equivalent to
% \[
%  \frac\alpha2 \left( u + \frac1\alpha \varphi\right)^2   + \beta|u^*|_0 \le \frac1{2\alpha}\varphi^2.
% \]
% This implies $2\alpha\beta|u^*|_0 \le \varphi^2$. Hence, if $|\varphi|< \sqrt{2\alpha\beta}$ then necessarily $u^*=0$.
% If $|\varphi| = \sqrt{2\alpha\beta}$ and $u^*\ne 0$, then $2\alpha\beta|u^*|_0 = \varphi^2$ and $u^* = -\frac1\alpha \varphi =
% - \sqrt{\frac{2\beta}\alpha}\in [u_a,0]$.
% In addition, if $|\varphi| = \sqrt{2\alpha\beta}$ then $u^*=0$ is another global minimum.
% Assume
% $\varphi>\sqrt{2\alpha\beta}$. We claim that $u^*=\proj_{[u_a,u_b]}(-\frac1\alpha \varphi) < - \sqrt{\frac{2\beta}\alpha}$, which is the
% global minimum of the quadratic part of the function.
% Since $u^* < - \sqrt{\frac{2\beta}\alpha}$ and $\varphi>\sqrt{2\alpha\beta}$, we have
% \[
%   \phi \cdot u^* +  \frac\alpha2 (u^*)^2 + \beta|u^*|_0
%   <\phi \cdot (- \sqrt{\frac{2\beta}\alpha}) + \frac\alpha2\frac{2\beta}\alpha + \beta
%   < \sqrt{2\alpha\beta} \cdot (- \sqrt{\frac{2\beta}\alpha})  + 2\beta
%   =0.
% \]
%
To proof the last statement observe that
$\frac{\alpha\gamma}2+\frac\beta\gamma - \sqrt{2\alpha\beta} = \frac12(\sqrt{\alpha\gamma} - \sqrt{\frac{2\beta}\gamma})^2 \ge0$.
Hence, if $|\varphi|<\sqrt{2\alpha\beta}$ then only case \ref{lem32_4} applies, and $u^*=0$ follows.
\end{proof}

\begin{corollary} \label{C3.3}
 Let $\bar u$ be a local minimum of \Pb with associated adjoint state $\bar\varphi$.
 Then we have for almost all $x\in \Omega$
 \begin{enumerate}
  \item for $\alpha>0$
  \begin{enumerate}
   \item   if $|\bar\varphi(x)|<\sqrt{2\alpha\beta}$ then $\bar u(x)=0$,
  \item if $\bar u(x)\ne0$ then $|\bar u(x)|\ge \min(\sqrt{\frac{2\beta}\alpha}, \gamma)$,

  \end{enumerate}
  \item for $\alpha=0$
  \begin{enumerate}
  \item if $|\bar\varphi(x)|<\frac\beta\gamma$ then $\bar u(x)=0$,
  \item if $\bar u(x)\ne0$ then $|\bar u(x)|=\gamma$.
 \end{enumerate}
 \end{enumerate}
\end{corollary}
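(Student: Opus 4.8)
The plan is to read off the corollary directly from Pontryagin's principle (Theorem~\ref{T3.1}) combined with the pointwise scalar analysis of Lemma~\ref{L3.2}. First I would note that, since
\[
 H(x,y,u,\varphi) = L(x,y) + \varphi u + \tfrac\alpha2 u^2 + \beta|u|_0
\]
and the term $L(x,\bar y(x))$ does not depend on $u$, the variational inequality \eqref{E3.1} asserts precisely that, for almost every $x\in\Omega$, the number $\bar u(x)$ is a global minimizer of
\[
 \min_{|u|\le\gamma}\ \bar\varphi(x)\,u + \tfrac\alpha2 u^2 + \beta|u|_0.
\]
Thus Lemma~\ref{L3.2} applies pointwise, with $\varphi = \bar\varphi(x)$ and $u^* = \bar u(x)$, for a.e.\ $x$.

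For the statements about the zero set, I would simply invoke the last assertion of Lemma~\ref{L3.2}: if $\alpha>0$ and $|\bar\varphi(x)|<\sqrt{2\alpha\beta}$, or if $\alpha=0$ and $|\bar\varphi(x)|<\beta/\gamma$, then $\bar u(x)=0$. This yields items~1(a) and~2(a).

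For the lower bounds on $|\bar u(x)|$ on the set $\{\bar u\ne 0\}$, I would run through the cases of Lemma~\ref{L3.2}. If $\alpha=0$ the only candidates for $u^*$ are $\{-\gamma,0,\gamma\}$, so $\bar u(x)\ne0$ forces $|\bar u(x)|=\gamma$, which is item~2(b). If $\alpha>0$, a nonzero minimizer can occur only in cases~\ref{lem32_1}--\ref{lem32_2}, where $\bar u(x)=-\sign(\bar\varphi(x))\gamma$ and hence $|\bar u(x)|=\gamma$, or in cases~\ref{lem32_4}--\ref{lem32_5}, where $\bar u(x)=-\bar\varphi(x)/\alpha$ with $|\bar\varphi(x)|\ge\sqrt{2\alpha\beta}$ and hence $|\bar u(x)|=|\bar\varphi(x)|/\alpha\ge\sqrt{2\beta/\alpha}$. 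In either situation $|\bar u(x)|\ge\min(\sqrt{2\beta/\alpha},\gamma)$, which is item~1(b).

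The only delicate point — the nearest thing to an obstacle — is bookkeeping: the six cases in Lemma~\ref{L3.2} have overlapping hypotheses, so the argument must be phrased as ``for every value of $\bar\varphi(x)$ at least one case applies, and in every case in which the prescribed $u^*$ is nonzero the claimed bound holds,'' rather than attempting to assign a unique case to each $x$. Measurability of the exceptional set is not an issue, since \eqref{E3.1} is already a statement for almost every $x\in\Omega$ and all the thresholds appearing are constants.
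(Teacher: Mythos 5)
Your proof is correct and is essentially the paper's own argument: the paper likewise deduces the corollary directly by applying the maximum principle of Theorem~\ref{T3.1} pointwise and then reading off the cases of Lemma~\ref{L3.2}, exactly as you do. Your additional case bookkeeping only spells out what the paper leaves implicit.
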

\begin{proof}
 The claim is a direct consequence of %the maximum principle
 Theorem \ref{T3.1} and Lemma \ref{L3.2}.
\end{proof}

\begin{corollary}
Let $\gamma<+\infty$. Then there is $\beta^*\in(0,+\infty)$ such that $\bar u=0$ is the only stationary point of \Pb for every $\beta > \beta^ *$, and thus the only possible local (and global) solution of \Pb.
\end{corollary}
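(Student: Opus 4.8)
The plan is to read the result off the uniform $L^\infty$-bound on adjoint states together with the sparsity dichotomy already established. Recall that a stationary point of \Pb is a control $\bar u\in\uad$ for which the maximum principle \eqref{E3.1} of Theorem \ref{T3.1} holds with $\bar\varphi=\varphi_{\bar u}$. The crucial observation is that, since $\gamma<\infty$, estimate \eqref{E2.4} provides a constant $M_\gamma$ with $\|\varphi_u\|_{L^\infty(\Omega)}\le M_\gamma$ for \emph{every} $u\in\uad$, and $M_\gamma$ can be chosen independently of $\beta$: the parameter $\beta$ enters only the cost functional $J$, not the state equation \eqref{E1.1} nor the adjoint equation \eqref{E2.2}, and $\uad$ itself does not depend on $\beta$. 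Hence every stationary point $\bar u$ satisfies $|\bar\varphi(x)|\le M_\gamma$ for almost all $x\in\Omega$, with $M_\gamma$ fixed once and for all.

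First I would fix the threshold. I set $\beta^*:=(1+M_\gamma^2)/(2\alpha)$ if $\alpha>0$, and $\beta^*:=\gamma(1+M_\gamma)$ if $\alpha=0$; in either case $\beta^*\in(0,+\infty)$. Since $\beta\mapsto\sqrt{2\alpha\beta}$ and $\beta\mapsto\beta/\gamma$ are increasing, a trivial computation shows that for every $\beta>\beta^*$ the relevant threshold appearing in Corollary \ref{C3.3}, namely $\sqrt{2\alpha\beta}$ when $\alpha>0$ and $\beta/\gamma$ when $\alpha=0$, is strictly larger than $M_\gamma$.

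Next I would argue both inclusions. Fix $\beta>\beta^*$. If $\bar u$ is any stationary point of \Pb, then $|\bar\varphi(x)|\le M_\gamma<\sqrt{2\alpha\beta}$ (resp.\ $<\beta/\gamma$) for almost all $x$, so Corollary \ref{C3.3} — the case $|\bar\varphi(x)|<\sqrt{2\alpha\beta}$ for $\alpha>0$, resp.\ $|\bar\varphi(x)|<\beta/\gamma$ for $\alpha=0$ — forces $\bar u(x)=0$ a.e., i.e.\ $\bar u=0$. Conversely, $\bar u=0$ is a stationary point: its adjoint state $\varphi_0$ again satisfies $\|\varphi_0\|_{L^\infty(\Omega)}\le M_\gamma<\sqrt{2\alpha\beta}$ (resp.\ $<\beta/\gamma$), so the last assertion of Lemma \ref{L3.2} shows that $u=0$ minimizes $u\mapsto\varphi_0(x)u+\frac\alpha2u^2+\beta|u|_0$ over $[-\gamma,\gamma]$ for almost all $x$, which is exactly \eqref{E3.1} written for $\bar u=0$. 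Thus $\bar u=0$ is the unique stationary point of \Pb for every $\beta>\beta^*$. Since, by Theorem \ref{T3.1}, every local (and hence every global) minimizer of \Pb is a stationary point, $\bar u=0$ is the only possible local (and global) solution.

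I do not expect a genuine obstacle here: the statement is essentially a corollary of the $\beta$-independent bound \eqref{E2.4} combined with Lemma \ref{L3.2}/Corollary \ref{C3.3}. The only points that deserve a word of care are (i) that $M_\gamma$ may, and must, be taken independent of $\beta$, so that a single $\beta^*$ works simultaneously for all larger $\beta$, and (ii) that ``stationary point'' is understood to include feasibility $\bar u\in\uad$, which is precisely what makes \eqref{E2.4} applicable.
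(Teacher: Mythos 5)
Your proposal is correct and follows essentially the same route as the paper: combine the $\beta$-independent bound \eqref{E2.4} with Corollary \ref{C3.3} and pick $\beta^*$ so that $M_\gamma$ falls below the threshold $\sqrt{2\alpha\beta}$ (resp.\ $\beta/\gamma$); the paper takes the sharp values $\beta^*=M_\gamma^2/(2\alpha)$, resp.\ $\gamma M_\gamma$. Your additional verification that $\bar u=0$ is indeed stationary (via the last statement of Lemma \ref{L3.2}) is a small, correct supplement that the paper leaves implicit.
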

\begin{proof}
This is a consequence of \eqref{E2.4} and Corollary \ref{C3.3}. Actually, we can take
\[
\beta^* = \left\{\begin{array}{cl}\displaystyle \frac{M_\gamma^2}{2\alpha} & \text{if } \alpha > 0,\\\gamma M_\gamma  & \text{if } \alpha = 0\end{array}\right.
\]
with $M_\gamma$ as in \eqref{E2.4}.
\end{proof}

\begin{lemma}
 Suppose that $\gamma=+\infty$, $\alpha>0$. Then there is $\beta^*\in(0,+\infty)$ such that $\bar u=0$ is the only possible global solution of \Pb for every $\beta > \beta^ *$.
\end{lemma}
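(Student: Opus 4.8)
The plan is to exploit the fact that, for $\gamma=+\infty$, the zero control is admissible ($\uad=L^\infty(\Omega)$), so any global solution $\bar u$ of \Pb satisfies $J(\bar u)\le J(0)$, where $J(0)=\int_\Omega L(x,y_0)\dx$ is finite ($y_0=S(0)\in C(\bar\Omega)$ and $L(\cdot,y_0)\in L^1(\Omega)$ by (A3)). Dropping the nonnegative term $\beta\|\bar u\|_0$ and using the lower bound $L(x,y)\ge\psi(x)$ with $\psi\in L^1(\Omega)$ — precisely the extra hypothesis imposed in (A3) when $\gamma=+\infty$ — this gives
\[
 \frac\alpha2\|\bar u\|_{L^2(\Omega)}^2 \;\le\; J(0)-\int_\Omega\psi(x)\dx \;=:\;C_0 ,
\]
with $C_0\ge 0$ a constant depending only on the data, \emph{not} on $\beta$ nor on the particular global solution. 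Hence $\|\bar u\|_{L^2(\Omega)}\le\sqrt{2C_0/\alpha}$ uniformly.

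Next I would insert this bound into the a priori estimates \eqref{E2.3}. Since $M_{\bar u}=\bar C(\|a(\cdot,0)\|_{L^{\bar p}(\Omega)}+\|\bar u\|_{L^2(\Omega)})$ is then bounded by some $M_*$ independent of $\beta$, we obtain $\|y_{\bar u}\|_{L^\infty(\Omega)}\le M_*$ and, taking $\psi_M$ nondecreasing in $M$ (the harmless convention implicitly used in the derivation of \eqref{E2.4}), $\|\bar\varphi\|_{L^\infty(\Omega)}\le\bar C\|\psi_{M_*}\|_{L^{\bar p}(\Omega)}=:\Phi_*$, again with $\Phi_*$ depending only on the data.

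Finally I invoke the sparsity structure of the problem. By Corollary \ref{C3.3}(1) — equivalently, by the last assertion of Lemma \ref{L3.2} — for almost all $x\in\Omega$ the condition $\bar u(x)\ne 0$ forces $|\bar\varphi(x)|\ge\sqrt{2\alpha\beta}$, that is,
\[
 \{x\in\Omega:\bar u(x)\ne0\}\ \subseteq\ \{x\in\Omega:|\bar\varphi(x)|\ge\sqrt{2\alpha\beta}\}\quad\text{up to a null set.}
\]
Set $\beta^*:=\Phi_*^2/(2\alpha)$, which can be written out explicitly in terms of $\bar C$, $\|a(\cdot,0)\|_{L^{\bar p}(\Omega)}$, $\alpha$, $J(0)$ and $\int_\Omega\psi\dx$. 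For every $\beta>\beta^*$ we have $\sqrt{2\alpha\beta}>\Phi_*\ge\|\bar\varphi\|_{L^\infty(\Omega)}$, so the set on the right is null; hence $\bar u=0$ a.e., which proves the claim.

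All the estimates involved are elementary. The only substantive point — and the place where the argument differs from the preceding corollary treating $\gamma<+\infty$ — is that there a bound on $\|\varphi_u\|_{L^\infty(\Omega)}$ was available uniformly over \emph{all} admissible controls, via \eqref{E2.4}, whereas now such a uniform bound is obtained only \emph{along global solutions}, through the comparison $J(\bar u)\le J(0)$ together with the coercivity of the $L^2$-term. This is exactly why the statement is restricted to global (not merely local or stationary) solutions and needs both $\alpha>0$ and the lower bound on $L$; recognizing that this comparison, rather than a direct $\uad$-estimate, is what must be used is the main (minor) obstacle.
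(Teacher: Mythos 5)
Your argument is correct and is essentially the paper's own proof: compare with $J(0)$, use the lower bound $L(x,y)\ge\psi(x)$ and $\alpha>0$ to get an $L^2$ bound on any global solution, feed this into \eqref{E2.3} to bound $\|\bar\varphi\|_{L^\infty(\Omega)}$ uniformly in $\beta$, and conclude via the sparsity implication of Corollary \ref{C3.3} with $\beta^*=\frac{1}{2\alpha}\bar C^2\|\psi_{\bar M}\|^2_{L^{\bar p}(\Omega)}$. No substantive differences beyond your (harmless) explicit remark on monotonicity of $M\mapsto\psi_M$.
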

\begin{proof}
If $\bar u$ is a global solution of \Pb, then $J(\bar u) \le J(0)$. This implies with Assumption (A3)
$ %\[
\|\psi\|_{L^1(\Omega)} + \frac{\alpha}{2}\|\bar u\|^2_{L^2(\Omega)} \le J(\bar u) \le J(0),
$ %\]
therefore
\[
\|\bar u\|_{L^2(\Omega)} \le M_0 = \sqrt{\frac 2\alpha\Big(J(0) - \|\psi\|_{L^1(\Omega)}\Big)}.
\]
We infer from \eqref{E2.3} the inequalities
$ %\[
\|\bar y\|_{L^\infty(\Omega)} \le \bar M = \bar C(\|a(\cdot,0)\|_{L^{\bar p}(\Omega)} + M_0)
$ %\]
and $\|\bar\varphi\|_{L^\infty(\Omega)} \le \bar C\|\psi_{\bar M}\|_{L^{\bar p}(\Omega)}$. Then, it is enough to take
$ %\[
\beta^* = \frac{1}{2\alpha}\bar C^2\|\psi_{\bar M}\|^2_{L^{\bar p}(\Omega)}
$ %\]
to deduce from Corollary \ref{C3.3} that $\bar u = 0$ whenever $\beta > \beta^*$.
\end{proof}

Le us introduce the tangent cone of $\uad$ at $\bar u$,
which is given by
\[\begin{aligned}
 T_{\uad}(\bar u) = \{ v\in L^2(\Omega):
 &\ v(x) \ge 0\text{ if } \bar u(x) =-\gamma, \\
 &\ v(x) \le 0\text{ if } \bar u(x) =+\gamma \ \}.
\end{aligned}\]

\begin{lemma}\label{L3.5}
 Let $\bar u$ be satisfy the maximum principle \eqref{E3.1}. Then it holds
 \begin{equation}\label{E3.2}
  \int_\Omega (\bar \varphi + \alpha\bar u) v\dx \ge 0 \quad \forall v\in T_{\uad}(u): \ v(x) = 0 \text{ if } \bar u(x)=0.
 \end{equation}
 If $\alpha=0$ then for all such $v$
 \begin{equation}\label{E3.3}
  \int_\Omega \bar \varphi  v\dx \ge\frac\beta\gamma \|v\|_{L^1(\Omega)}
  \end{equation}
 holds.
\end{lemma}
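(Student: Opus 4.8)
The plan is to derive both inequalities directly from the pointwise maximum principle \eqref{E3.1} combined with the scalar analysis of Lemma \ref{L3.2}. For fixed $x$ the term $L(x,\bar y(x))$ is independent of the last argument of $H$, so subtracting it from both sides of \eqref{E3.1} shows that, for almost every $x\in\Omega$, the value $\bar u(x)$ is a global minimizer of $u\mapsto \bar\varphi(x)u+\frac\alpha2 u^2+\beta|u|_0$ on $[-\gamma,\gamma]$. Hence Lemma \ref{L3.2} applies pointwise with $\varphi=\bar\varphi(x)$, and the structure of the optimal $\bar u(x)$ described there is exactly what is needed.

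First I would split $\Omega$, up to a null set, into the measurable sets $\Omega_0=\{\bar u=0\}$, $\Omega_+=\{\bar u=\gamma\}$, $\Omega_-=\{\bar u=-\gamma\}$ and, when $\alpha>0$, $\Omega_\circ=\{0<|\bar u|<\gamma\}$; by Lemma \ref{L3.2} these cover $\Omega$ (for $\alpha=0$ only the first three occur, since then $\bar u(x)\in\{-\gamma,0,\gamma\}$). On $\Omega_0$ every admissible test function $v$ in \eqref{E3.2}--\eqref{E3.3} vanishes, so that set contributes nothing. On $\Omega_\circ$ (alternative \ref{lem32_4} or \ref{lem32_5} of Lemma \ref{L3.2}) one has $\bar u=-\bar\varphi/\alpha$, hence $\bar\varphi+\alpha\bar u=0$ a.e.\ there, so this set also contributes nothing.

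The essential part is on $\Omega_\pm$. On $\Omega_+$, alternative \ref{lem32_1} or \ref{lem32_2} (case $\alpha>0$) forces $\bar u=\gamma=-\sign(\bar\varphi)\gamma$ together with $|\bar\varphi|\ge\alpha\gamma$, hence $\bar\varphi\le-\alpha\gamma$ and $\bar\varphi+\alpha\bar u=\bar\varphi+\alpha\gamma\le0$; if $\alpha=0$ Lemma \ref{L3.2} gives $\bar\varphi\le-\beta/\gamma$. Since $v\le0$ on $\Omega_+$ by the definition of $T_{\uad}(\bar u)$, we obtain $(\bar\varphi+\alpha\bar u)v\ge0$ pointwise there, and when $\alpha=0$ even $\bar\varphi v\ge\frac\beta\gamma|v|$. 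Symmetrically, on $\Omega_-$ one gets $\bar\varphi\ge\alpha\gamma$ (resp.\ $\bar\varphi\ge\beta/\gamma$), $v\ge0$, and again $(\bar\varphi+\alpha\bar u)v\ge0$, resp.\ $\bar\varphi v\ge\frac\beta\gamma|v|$. Integrating these pointwise estimates over the partition and summing yields \eqref{E3.2}; in the case $\alpha=0$ the sharper bounds on $\Omega_\pm$ add up to $\int_\Omega\bar\varphi v\dx\ge\frac\beta\gamma\int_{\Omega_+\cup\Omega_-}|v|\dx=\frac\beta\gamma\|v\|_{L^1(\Omega)}$, because $v$ vanishes on $\Omega_0$, which is \eqref{E3.3}.

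I do not expect a serious obstacle here; the only points needing care are the bookkeeping of the sign information extracted from Lemma \ref{L3.2}—in particular that $\bar u(x)\ne0$ rules out the alternatives with $u^*=0$ and thus yields the stated inequalities for $|\bar\varphi(x)|$—and checking that $\Omega_0,\Omega_\pm,\Omega_\circ$ are measurable and exhaust $\Omega$ up to a null set, which follows from the measurability of $\bar u$ and $\bar\varphi$.
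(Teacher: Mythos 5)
Your argument is correct and follows essentially the same route as the paper's proof: a pointwise sign discussion of the integrand based on the scalar characterization in Lemma \ref{L3.2}, split according to the values $\bar u(x)=0$, $|\bar u(x)|=\gamma$, and (for $\alpha>0$) $0<|\bar u(x)|<\gamma$, followed by integration. Your explicit partition and the sharper pointwise bound $\bar\varphi v\ge\frac{\beta}{\gamma}|v|$ on $\{|\bar u|=\gamma\}$ for $\alpha=0$ are exactly the steps the paper carries out, only written out in slightly more detail.
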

\begin{proof}
First, suppose $\alpha>0$.
 Let us discuss the sign of the integrand in the claim pointwise. It suffices to investigate only points $x\in \Omega$ such that
 $\bar u(x)\ne0$. If $|\bar u(x)|<\gamma$ then $\alpha \bar u(x) + \bar\varphi(x)=0$ by the maximum principle and properties \ref{lem32_3} and \ref{lem32_4} of Lemma \ref{L3.2}.
 If $\bar u(x) = \gamma$, then $v(x)\le0$, and $\bar\varphi(x) \le -\alpha\gamma$ by {properties \ref{lem32_1} and \ref{lem32_2}}  of Lemma \ref{L3.2}.
 Hence, $(\bar \varphi(x) + \alpha\bar u(x)) v(x) \ge 0$ holds. Analogously, we argue for the case $\bar u(x) = -\gamma$.
Second, let $\alpha=0$. Then from the characterization in Lemma \ref{L3.2}, we have $|\varphi(x)|\ge \frac\beta\gamma$ for almost all
$x\in \Omega$ such that $\bar u(x)\ne0$. Moreover, $v(x)$ and $\varphi(x)$ have the same sign, hence the claim follows.
\end{proof}

\subsection{Second-order necessary optimality conditions}

In addition, we will prove second-order necessary conditions for \Pb.

\begin{theorem}\label{T3.6}
 Let $\bar u$ be locally optimal for \Pb. Then it holds
 \[
  F''(\bar u)(v,v) + \alpha \|v\|_{L^2(\Omega)}^2 \ge 0
 \]
for all $v \in C_{\bar u}$, where the critical cone $C_{\bar u}$ is given by
\[
 C_{\bar u}  = \{ v\in T_{\uad}(\bar u): \ v(x)=0 \text{ if } \bar u(x)=0 \text{ or } \bar \varphi(x) + \alpha\bar u(x)\ne0\}.
\]
\end{theorem}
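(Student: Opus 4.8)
The plan is to reduce the second-order analysis of \Pb to that of a smooth auxiliary functional by exploiting the sparsity structure established in Corollary \ref{C3.3}. The key observation is that for $v\in C_{\bar u}$ the support of $v$ is contained in the set $\Omega^+:=\{x:\bar u(x)\ne0\}$, and on this set the $L^0$-term $\beta\|u\|_0$ is locally constant under perturbations that do not create new zeros. First I would fix $v\in C_{\bar u}$ with $v\in L^\infty(\Omega)$ (the general case follows by a density argument using Theorem \ref{T2.3}, since $z_{u-\bar u}$ depends continuously on $u-\bar u$ in $L^1(\Omega)$) and consider the perturbed controls $u_t := \bar u + t v$ for small $t>0$. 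Because $|\bar u(x)|\ge\min(\sqrt{2\beta/\alpha},\gamma)$ resp. $|\bar u(x)|=\gamma$ on $\Omega^+$ by Corollary \ref{C3.3}, and $v$ is bounded, there is $t_0>0$ such that for $|t|\le t_0$ one has $u_t(x)\ne0$ exactly on $\Omega^+$, so $\|u_t\|_0=\|\bar u\|_0$; moreover $v\in T_{\uad}(\bar u)$ guarantees $u_t\in\uad$ for $t>0$ small. Hence along this family
\[
 J(u_t) = F(u_t) + \frac\alpha2\|u_t\|_{L^2(\Omega)}^2 + \beta\|\bar u\|_0,
\]
and the last term drops out of all difference quotients.

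Next I would perform a second-order Taylor expansion of the smooth part $g(t):=F(\bar u+tv)+\frac\alpha2\|\bar u+tv\|_{L^2(\Omega)}^2$. Using \eqref{E2.5}, \eqref{E2.6} and local optimality $J(u_t)\ge J(\bar u)$ for $t\in(0,t_0]$, we get
\[
 0 \le J(u_t)-J(\bar u) = g(t)-g(0)
   = t\Big(F'(\bar u)v + \alpha\!\int_\Omega\!\bar u v\dx\Big) + \frac{t^2}{2}\big(F''(\xi_t)(v,v)+\alpha\|v\|_{L^2(\Omega)}^2\big)
\]
for some $\xi_t$ on the segment between $\bar u$ and $u_t$, after a mean-value form of the remainder; alternatively I would expand with the exact second-order term at $\bar u$ plus the error controlled by Theorem \ref{T2.2}. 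The first-order term vanishes: by definition of $C_{\bar u}$ we have $v(x)=0$ wherever $\bar\varphi(x)+\alpha\bar u(x)\ne0$, so $\int_\Omega(\bar\varphi+\alpha\bar u)v\dx=0$, and $F'(\bar u)v=\int_\Omega\bar\varphi v\dx$ by the identification $F'(\bar u)=\bar\varphi$. Dividing by $t^2/2$ and letting $t\downarrow0$, using the continuity of $F''$ (or the estimate $|(F''(\xi_t)-F''(\bar u))(v,v)|\le\epsilon\|z_v\|_{L^2(\Omega)}^2$ from Theorem \ref{T2.2}, noting $z_v$ scales linearly so this is $o(1)$ in $t$), yields $F''(\bar u)(v,v)+\alpha\|v\|_{L^2(\Omega)}^2\ge0$ for all bounded $v\in C_{\bar u}$.

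Finally I would remove the boundedness assumption on $v$. Given arbitrary $v\in C_{\bar u}\subset L^2(\Omega)$, the truncations $v_k:=\proj_{[-k,k]}(v)$ still lie in $C_{\bar u}$ (truncation preserves the sign constraints in $T_{\uad}(\bar u)$ and the vanishing conditions), are bounded, and $v_k\to v$ in $L^2(\Omega)$; then $z_{v_k}\to z_v$ in $L^2(\Omega)$ by linearity and the a priori estimate for \eqref{E2.1}, so $F''(\bar u)(v_k,v_k)\to F''(\bar u)(v,v)$ by \eqref{E2.6} and $\|v_k\|_{L^2(\Omega)}\to\|v\|_{L^2(\Omega)}$, giving the inequality for $v$. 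The main obstacle is the bookkeeping in the second paragraph: one must make sure the $L^0$-term is genuinely frozen along the whole segment used in the Taylor remainder — this is exactly what the quantitative lower bound on $|\bar u|$ on its support (Corollary \ref{C3.3}) together with $v\in L^\infty$ buys us — and that the feasibility $u_t\in\uad$ holds for the one-sided perturbation, which is precisely why $C_{\bar u}$ is built on the tangent cone $T_{\uad}(\bar u)$ rather than on all of $L^2(\Omega)$.
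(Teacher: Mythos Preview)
Your overall strategy matches the paper's: freeze the $L^0$-term along a ray, Taylor-expand the smooth part, use the critical cone to kill the first-order term, then pass to the limit via truncation. However, there is a genuine gap in the feasibility step. You assert that ``$v\in T_{\uad}(\bar u)$ guarantees $u_t\in\uad$ for $t>0$ small'', but this is false in general even for bounded $v$: the tangent cone imposes a sign condition on $v(x)$ only where $\bar u(x)=\pm\gamma$, whereas $|\bar u(x)|$ may approach $\gamma$ on a set of positive measure without attaining it. At such points no constraint on $v$ is present, so $\bar u+tv$ can violate $|\cdot|\le\gamma$ for every $t>0$. Concretely, if $\alpha>0$ and $\sqrt{2\alpha\beta}<|\bar\varphi(x)|<\alpha\gamma$ with $|\bar\varphi(x)|$ arbitrarily close to $\alpha\gamma$, then $\bar u(x)=-\bar\varphi(x)/\alpha$ lies strictly inside $(-\gamma,\gamma)$ but arbitrarily close to the boundary, and $\bar\varphi(x)+\alpha\bar u(x)=0$ there, so the critical-cone condition does not force $v(x)=0$.

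The paper repairs exactly this point by building the approximation $v_k$ with an extra cutoff: one sets $v_k(x)=0$ whenever $\gamma-\tfrac1k<|\bar u(x)|<\gamma$, and $v_k(x)=\proj_{[-k,k]}(v(x))$ otherwise. This buffer zone makes $\bar u+tv_k$ genuinely feasible for $t>0$ small (depending on $k$); one then carries out your second-order argument for each $k$ and lets $k\to\infty$ at the end. Your truncation $v_k=\proj_{[-k,k]}(v)$ handles the $L^\infty$-reduction but not the feasibility near the box constraint; adding the buffer-zone cutoff closes the gap. Incidentally, the paper uses only the inequality $\|\bar u+tv_k\|_0\le\|\bar u\|_0$, which follows immediately from $v_k=0$ where $\bar u=0$ and does not require the quantitative lower bound from Corollary~\ref{C3.3}; your equality argument is correct but stronger than needed.
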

Observe that {Lemma \ref{L3.2} leads to} $C_{\bar u} =\{0\}$ in the case $\alpha=0$.
\begin{proof}
Let $v\in C_{\bar u}$ be given. For $k\in \mathbb N$ define
\[
 v_k(x):=\begin{cases}
 0 & \text{ if } \gamma -\frac1k< |\bar u(x)| < \gamma,\\
          \proj_{[-k,k]}(v(x)) & \text{ otherwise}.
         \end{cases}
\]
Then $v_k$ is a feasible direction at $\bar u$,
and it holds
$J(\bar u+ tv_k) \ge J(\bar u)$ for all $t>0$ sufficiently small.
In addition, $\|\bar u+tv_k\|_0 \le \|\bar u\|_0$ by definition of $v_k$ and $v$.
Using this fact and expanding the differentiable parts, we find
\[
 \begin{aligned}
  0 &\le J(\bar u+ tv_k) - J(\bar u)\\
  &\le F(\bar u+tv_k) - F(\bar u) + \frac\alpha2 \|\bar u+tv_k\|^2_{L^2(\Omega)} - \frac\alpha2 \|\bar u\|^2_{L^2(\Omega)}\\
  & = t \int_\Omega (\bar \varphi + \alpha\bar u) v_k\dx  + \frac{t^2}2 F''(\bar u +\theta_t tv_k)v_k^2 + \frac\alpha2 t^2 \|v_k\|_{L^2(\Omega)}^2
 \end{aligned}
\]
with some $\theta_t\in(0,1)$. By construction of $v_k$, we have $\int_\Omega (\bar \varphi + \alpha\bar u) v_k\dx=0$. Dividing the inequality by $t^2$ and
passing to the limit $t\searrow0$, it follows
\[
   F''(\bar u)(v_k,v_k) + \alpha \|v_k\|_{L^2(\Omega)}^2 \ge 0
\]
for all $k$. Since $v_k \to v$ in $L^2(\Omega)$ for $k\to\infty$, the claim is proven.
\end{proof}

\subsection{Study of a reduced problem}
If $\bar u$ is a local solution of \Pb,
then it is also a local solution of
\[
\Pbred: \quad  \min_{u \in \uad(\bar u)} \int_\Omega L(x,y_u(x))\dx + \frac{\alpha}{2}\|u\|^2_{L^2(\Omega)}
\]
subject to the state equation \eqref{E1.1}, where the set $\uad(\bar u)$ is given by
\[
 \uad(\bar u) := \{u\in \uad: \ u(x) = 0 \text{ if } \bar u(x) =0\}.
\]
Due to the absence of the $L^0$-term, the problem \Pbred is a smooth optimal control problem.
Its first and second-order necessary optimality conditions are identical to
Lemma \ref{L3.5}, \eqref{E3.2}, and Theorem \ref{T3.6} above.

The first-order inequality \eqref{E3.3} of Lemma \ref{L3.5} in the case $\alpha=0$ is in fact a first-order
sufficient condition for local optimality in \Pbred.

\begin{corollary}
 Let $\alpha=0$.
 Let $\bar u$ satisfy the maximum principle for \Pb.
 Then there exists $\rho>0$ such that
 \[
  F(\bar u) + \frac\beta{2\gamma} \|u-\bar u\|_{L^1(\Omega)} \le F(u)
 \]
 for all $u\in B_\rho(\bar u) \cap \uad(\bar u)$, where $B_\rho(\bar u)$ denotes the $L^2(\Omega)$-ball around $\bar u$.
\end{corollary}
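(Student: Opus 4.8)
The plan is to exploit the first-order inequality \eqref{E3.3} from Lemma \ref{L3.5} together with a quantitative control of the remainder in the Taylor expansion of $F$. First I would note that by Corollary \ref{C3.3}(2b), if $\alpha=0$ and $\bar u$ satisfies the maximum principle, then $|\bar u(x)| \in \{0,\gamma\}$ for a.a.\ $x$. Hence, for any $u \in \uad(\bar u)$ the difference $v := u - \bar u$ satisfies $v(x) = 0$ whenever $\bar u(x) = 0$, $v(x) \le 0$ whenever $\bar u(x) = \gamma$, and $v(x) \ge 0$ whenever $\bar u(x) = -\gamma$; in other words $v \in T_{\uad}(\bar u)$ with $v(x) = 0$ on $\{\bar u = 0\}$, which is exactly the class of directions for which \eqref{E3.3} applies. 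Therefore $\int_\Omega \bar\varphi\, v \dx \ge \frac\beta\gamma \|v\|_{L^1(\Omega)}$.

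Next I would write the Taylor expansion of $F$ around $\bar u$: for $u \in B_\rho(\bar u) \cap \uad(\bar u)$,
\[
 F(u) = F(\bar u) + \int_\Omega \bar\varphi\, (u-\bar u)\dx + \frac12 F''(\bar u + \theta(u-\bar u))(u-\bar u)^2
\]
for some $\theta \in (0,1)$, using $F'(\bar u) = \bar\varphi$ from \eqref{E2.5} and the $C^2$ regularity of $F$. The linear term is bounded below by $\frac\beta\gamma \|u-\bar u\|_{L^1(\Omega)}$ via \eqref{E3.3}. It remains to absorb the second-order term into half of this: I would bound $|F''(\bar u + \theta(u-\bar u))(u-\bar u)^2|$ by first writing it as $|F''(\bar u)(u-\bar u)^2| + |(F''(\bar u+\theta(u-\bar u)) - F''(\bar u))(u-\bar u)^2|$, controlling the second piece by Theorem \ref{T2.2} (for $\rho$ small, $\le \epsilon \|z_{u-\bar u}\|_{L^2(\Omega)}^2$) and the first piece by the boundedness of the kernel in \eqref{E2.6} together with $\|z_{u-\bar u}\|_{L^2(\Omega)}^2 \le C \|u-\bar u\|_{L^1(\Omega)}^2$ from Theorem \ref{T2.3}. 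This gives $|F''(\cdots)(u-\bar u)^2| \le C \|u-\bar u\|_{L^1(\Omega)}^2$. Finally, since $\|u-\bar u\|_{L^1(\Omega)} \le |\Omega|^{1/2}\|u-\bar u\|_{L^2(\Omega)} \le |\Omega|^{1/2}\rho$, shrinking $\rho$ so that $C \cdot |\Omega|^{1/2}\rho \le \frac{\beta}{2\gamma}$ yields $\frac12|F''(\cdots)(u-\bar u)^2| \le \frac{\beta}{2\gamma}\|u-\bar u\|_{L^1(\Omega)}$, and hence
\[
 F(u) \ge F(\bar u) + \frac\beta\gamma\|u-\bar u\|_{L^1(\Omega)} - \frac{\beta}{2\gamma}\|u-\bar u\|_{L^1(\Omega)} = F(\bar u) + \frac{\beta}{2\gamma}\|u-\bar u\|_{L^1(\Omega)},
\]
which is the claim.

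The main obstacle is making the second-order remainder genuinely lower order than the linear term: since the linear term scales like $\|u-\bar u\|_{L^1(\Omega)}$ while the remainder scales like $\|u-\bar u\|_{L^2(\Omega)}^2$ \emph{a priori}, one must use the $L^1$--$L^2$ smoothing estimate $\|z_{u-\bar u}\|_{L^2(\Omega)} \le C_z\|u-\bar u\|_{L^1(\Omega)}$ of Theorem \ref{T2.3} to get a quadratic-in-$L^1$ bound on $F''$, which then beats the linear $L^1$ term for $\|u-\bar u\|$ small. This is precisely the bang-bang mechanism: because $\bar u$ takes only the values $0$ and $\pm\gamma$, every admissible perturbation $v = u-\bar u$ in $\uad(\bar u)$ has the property that $|v|$ is, pointwise, comparable to $|v|_0$ on its support only up to the factor $\gamma$, but more importantly $z_v$ enjoys the improved $L^1 \to L^2$ estimate, so no $L^2$-coercivity of the Hessian is needed at all.
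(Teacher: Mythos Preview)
Your proof is correct but takes a slightly more elaborate route than the paper. The paper uses a \emph{first-order} mean-value expansion
\[
 F(u)-F(\bar u)=F'(\bar u)(u-\bar u)+\big(F'(\bar u+\theta(u-\bar u))-F'(\bar u)\big)(u-\bar u),
\]
and then invokes the $L^\infty$-continuity of $u\mapsto F'(u)=\varphi_u$ (Lemma~2.5 of \cite{Casas2012}) to get $\|F'(\bar u+\theta(u-\bar u))-F'(\bar u)\|_{L^\infty(\Omega)}\le\frac\beta{2\gamma}$ for $\rho$ small; the remainder is then immediately bounded by $\frac\beta{2\gamma}\|u-\bar u\|_{L^1(\Omega)}$ via the $L^\infty$--$L^1$ H\"older pairing, and \eqref{E3.3} finishes. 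Your argument instead expands to second order, bounds $|F''(\cdot)(u-\bar u)^2|$ through \eqref{E2.6} and Theorem~\ref{T2.3} as $C\|u-\bar u\|_{L^1(\Omega)}^2$, and then uses smallness of $\|u-\bar u\|_{L^1(\Omega)}$ to absorb one factor. Both reach the same conclusion; the paper's version is shorter because the $L^\infty$ bound on $\varphi_u-\bar\varphi$ produces the $L^1$ factor directly, while your route has the advantage of relying only on results stated within the paper (Theorems~\ref{T2.2}--\ref{T2.3}) rather than an external reference. A minor remark: the splitting of $F''(\bar u+\theta v)$ into $F''(\bar u)$ plus a difference is unnecessary---you could bound $|F''(w)(v,v)|\le M\|z_v^w\|_{L^2(\Omega)}^2\le M C_z^2\|v\|_{L^1(\Omega)}^2$ directly, uniformly for $w$ in a ball around $\bar u$.
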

\begin{proof}
 First, we have the expansion
 \[
  F(u) - F(\bar u) = F'(\bar u)(u-\bar u) + (F'(\bar u + \theta(u-\bar u))-F'(\bar u))(u-\bar u)
 \]
 with some $\theta\in (0,1)$.
 Using the properties of $F$,
 there is $\rho>0$ such that $ \|F'(\bar u + \theta(u-\bar u))-F'(\bar u)\|_{L^\infty(\Omega)} \le \frac\beta{2\gamma} $ for all $u\in B_\rho(\bar u)$,
 see \cite[Lemma 2.5]{Casas2012}.
 The claim follows from \eqref{E2.5} and Lemma \ref{L3.5}, \eqref{E3.3}.
\end{proof}

% \marginpar{deleted a corollary here}

Similarly, we can formulate a second-order sufficient condition for \Pbred in the case $\alpha>0$.

\begin{corollary}
 Let $\alpha>0$.
 Let $\bar u$ satisfy the maximum principle for \Pb.
 Assume that
 \[
  F''(\bar u)(v,v) + \alpha \|v\|_{L^2(\Omega)}^2 > 0 \quad \forall v\in C_{\bar u} \setminus\{0\},
 \]
 where $C_{\bar u}$ is as in Theorem \ref{T3.6}. Then $\bar u$ is locally optimal for \Pbred in the $L^2(\Omega)$-sense.
\end{corollary}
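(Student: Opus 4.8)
The plan is to argue by contradiction, using the linearization-plus-compactness technique of \cite{Casas2012}. Abbreviate the objective of \Pbred by $G(u):=F(u)+\frac\alpha2\|u\|^2_{L^2(\Omega)}$. If $\bar u$ were not locally optimal for \Pbred in the $L^2(\Omega)$-sense, there would exist $u_k\in\uad(\bar u)$ with $u_k\to\bar u$ in $L^2(\Omega)$ and $G(u_k)<G(\bar u)$; set $\rho_k:=\|u_k-\bar u\|_{L^2(\Omega)}>0$ and $v_k:=\rho_k^{-1}(u_k-\bar u)$, so that $\|v_k\|_{L^2(\Omega)}=1$, and pass to a subsequence with $v_k\rightharpoonup v$ in $L^2(\Omega)$. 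The starting point is the second-order Taylor expansion of $G$ at $\bar u$, using \eqref{E2.5}, the identification $F'(\bar u)=\bar\varphi$, and Theorem \ref{T2.2} for the remainder:
\[
0>G(u_k)-G(\bar u)=\rho_k\int_\Omega(\bar\varphi+\alpha\bar u)v_k\dx+\frac{\rho_k^2}{2}\big(F''(\bar u)(v_k,v_k)+\alpha\big)+R_k ,
\]
where, for any fixed $\epsilon>0$ and $k$ large, $|R_k|\le\frac\epsilon2\rho_k^2\|z_{v_k}\|^2_{L^2(\Omega)}$, and $\|z_{v_k}\|_{L^2(\Omega)}$ stays bounded by Theorem \ref{T2.3}.

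First I would identify $v$ as an element of $C_{\bar u}$. Since $T_{\uad}(\bar u)$ is a closed convex cone and each $v_k$ lies in it, the weak limit satisfies $v\in T_{\uad}(\bar u)$; since $v_k(x)=0$ wherever $\bar u(x)=0$ by definition of $\uad(\bar u)$, also $v(x)=0$ there. For the remaining constraint I would argue as in the proof of Lemma \ref{L3.5}: because $v_k$ is a feasible direction at $\bar u$ vanishing where $\bar u=0$, the maximum principle forces $(\bar\varphi+\alpha\bar u)v_k\ge0$ a.e., so that $\int_\Omega(\bar\varphi+\alpha\bar u)v_k\dx=\int_{\{\bar\varphi+\alpha\bar u\ne0\}}|\bar\varphi+\alpha\bar u|\,|v_k|\dx\ge0$. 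Dividing the Taylor inequality by $\rho_k$ and letting $k\to\infty$ (the surviving terms being $o(1)$ since $F''(\bar u)(v_k,v_k)$ is bounded) gives $\int_{\{\bar\varphi+\alpha\bar u\ne0\}}|\bar\varphi+\alpha\bar u|\,|v_k|\dx\to0$; hence, along a further subsequence, $|v_k|\to0$ a.e.\ on $\{\bar\varphi+\alpha\bar u\ne0\}$, so that $v=0$ there and $v\in C_{\bar u}$. I expect this identification to be the main obstacle, as it is the only step that exploits the pointwise sign structure coming from the maximum principle (Lemma \ref{L3.2}, Lemma \ref{L3.5}) and it rests on upgrading weak $L^2$-convergence to pointwise convergence of $|v_k|$ on the active set.

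It then remains to pass to the limit in the curvature term and derive the contradiction. Dividing the Taylor inequality by $\rho_k^2$, dropping the nonnegative first-order term, and using the bound on $R_k$ (with $\epsilon>0$ arbitrary) gives $\limsup_k\big(F''(\bar u)(v_k,v_k)+\alpha\big)\le0$. Since $S'(\bar u):L^2(\Omega)\to H^1_0(\Omega)$ is bounded and $H^1_0(\Omega)$ embeds compactly into $L^2(\Omega)$, we have $z_{v_k}=S'(\bar u)v_k\to z_v=S'(\bar u)v$ strongly in $L^2(\Omega)$; the coefficient in \eqref{E2.6} belongs to $L^\infty(\Omega)$ by (A2), (A3) and $\bar y,\bar\varphi\in L^\infty(\Omega)$, so $F''(\bar u)(v_k,v_k)\to F''(\bar u)(v,v)$, whence $F''(\bar u)(v,v)+\alpha\le0$. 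If $v\ne0$, then $v\in C_{\bar u}\setminus\{0\}$, and the hypothesis together with $\|v\|_{L^2(\Omega)}\le1$ yield $0<F''(\bar u)(v,v)+\alpha\|v\|^2_{L^2(\Omega)}\le F''(\bar u)(v,v)+\alpha\le0$, a contradiction; if $v=0$, then $F''(\bar u)(v,v)=0$ and $F''(\bar u)(v,v)+\alpha=\alpha>0$ contradicts $\le0$. Either way we reach a contradiction, so $\bar u$ is locally optimal for \Pbred.
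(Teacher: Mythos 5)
Your proof is correct, and it is worth noting how it relates to the paper: the paper disposes of this corollary in a single line, citing \cite[Theorem 2.2]{Casas2012} applied to \Pbred, so what you have written is essentially the standard argument behind that citation rather than a citation — a contradiction argument with normalized directions $v_k=(u_k-\bar u)/\rho_k$, weak compactness, identification of the weak limit as a critical direction, strong $L^2(\Omega)$-convergence of $z_{v_k}$ via the compact embedding $H_0^1(\Omega)\hookrightarrow L^2(\Omega)$, and the case $v=0$ ruled out by $\alpha>0$. This buys self-containedness at the price of length; the paper buys brevity at the price of sending the reader to \cite{Casas2012}. Two steps in your write-up deserve a little more care. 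First, your passage from $\int_\Omega|\bar\varphi+\alpha\bar u|\,|v_k|\dx\to0$ to $v=0$ on $\{\bar\varphi+\alpha\bar u\ne0\}$ via pointwise a.e.\ convergence implicitly uses the (standard but unstated) fact that an $L^2$-bounded sequence converging a.e.\ has its weak limit equal to its pointwise limit; a cleaner route is to pass to the limit directly in the first-order term using $v_k\rightharpoonup v$, which gives $\int_\Omega(\bar\varphi+\alpha\bar u)v\dx=0$, and then to apply the pointwise sign argument of Lemma \ref{L3.5} to $v$ itself, since $v\in T_{\uad}(\bar u)$ and $v=0$ where $\bar u=0$. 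Second, Theorem \ref{T2.2} is stated for the pair evaluation point $u$ and direction $u-\bar u$, whereas you invoke it at the intermediate point $\bar u+\theta(u_k-\bar u)$ with direction $u_k-\bar u$; this is legitimate by the $2$-homogeneity of both sides (and is exactly how the paper itself uses Theorem \ref{T2.2} in Section \ref{S4}), but it should be said. Finally, a cosmetic point: you reuse the letter $G$ for the reduced objective, which collides with the paper's convexified functional $G$; choose another symbol.
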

\begin{proof}
 This is \cite[Theorem 2.2]{Casas2012} applied to \Pbred.
\end{proof}

\section{Second-order sufficient optimality conditions}
\label{S4}

In this section, we will study sufficient optimality conditions of second order.
First, we will develop such a condition for a partially convexified problem,
where
the term
\[
 j(u) := \frac\alpha2 \|u\|_{L^2(\Omega)}^2 + \beta \|u\|_0
\]
is replaced by its convexification on the feasible intervall $[-\gamma,\gamma]$.

\subsection{Partially convexified problem}

The convexification of $j$ will be denoted by
\[
 G(u):=\int_\Omega g(u(x))\dx,
\]
where $g:\R\to\R$ is the convexification of the integrand of $j$.
Here, we have to distinguish two cases.
In case $\sqrt{\frac{2\beta}\alpha} < \gamma$, the function
$g$ is given by
\[
 g(u) = \begin{cases}
         \frac\alpha2 u^2 + \beta & \text{ if } |u| \ge \sqrt{\frac{2\beta}\alpha},\\
         \sqrt{2\alpha\beta} |u| & \text{ if } |u| < \sqrt{\frac{2\beta}\alpha},\\
        \end{cases}
\]
with its directional derivative at $u\in \R$ in direction $v$ given by
\begin{equation}\label{E4.1}
 g'(u;v) =\begin{cases}
         \alpha u v  & \text{ if } |u| \ge \sqrt{\frac{2\beta}\alpha},\\
         \sqrt{2\alpha\beta} \sign(u)v & \text{ if } 0<|u| < \sqrt{\frac{2\beta}\alpha},\\
         \sqrt{2\alpha\beta} |v| & \text{ if } u=0.
        \end{cases}
\end{equation}
In addition, we have the important equality
\begin{equation}\label{eq405}
 g(u) = \frac\alpha2 u^2 + \beta |u|_0 \ \Leftrightarrow \ u=0 \text{ or } |u|\ge \sqrt{\frac{2\beta}\alpha}.
\end{equation}
In the case $\sqrt{\frac{2\beta}\alpha} \ge \gamma$ with $\alpha \ge 0$ the integrand $g$ of the convex hull of $j$ is given
by
\[
 g(u) = \left( \frac{\alpha\gamma}2 + \frac\beta\gamma \right) |u|.
\]
Please compare also with the distinction of cases in Lemma \ref{L3.2} and Corollary \ref{C3.3}.
The function $g$ is continuously differentiable on $\R\setminus\{0\}$.
In addition, $G$ is weakly lower semicontinuous on $L^2(\Omega)$.

The partially convexified problem is defined as
\[
 \Pbpc: \ \min_{u\in \uad} F(u) + G(u).
\]
The objective functional is the sum of a smooth function $F$ and a convex function $G$.
This functional is weakly lower semicontinuous, hence \Pbpc is solvable.
Its first-order optimality conditions are as follows.

\begin{theorem}[First-order necessary conditions for \Pbpc]
\label{T4.1}
Let $\bar u\in \uad$ be locally optimal for \Pbpc.
Let $\bphi$ denote the associated adjoint state.
Then the variational inequality
\begin{equation}\label{eq41}
 \int_\Omega  \bphi(x)v(x) + g'(\bar u(x);v(x))\dx \ge 0
\end{equation}
is satisfied for all $v\in T_\uad(\bar u)$.
\end{theorem}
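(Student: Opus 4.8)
The plan is to derive the variational inequality \eqref{eq41} as a first-order condition for the composite problem $\min_{u\in\uad} F(u)+G(u)$, exploiting that $F$ is of class $C^2$ (hence Gâteaux differentiable) and $G$ is convex with well-defined directional derivatives. First I would fix an arbitrary $v\in T_\uad(\bar u)$ and exhibit a sequence of feasible directions $v_t$ converging to $v$ such that $\bar u+tv_t\in\uad$ for small $t>0$; this is the standard density argument for the tangent cone of $\uad$, e.g. one sets $v_t$ to be $v$ truncated on the sets where $\bar u$ is within distance $t\|v\|_\infty$ of $\pm\gamma$, so that $\bar u(x)+tv_t(x)\in[-\gamma,\gamma]$ pointwise a.e. and $v_t\to v$ in $L^2(\Omega)$. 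By local optimality of $\bar u$ for \Pbpc, $F(\bar u+tv_t)+G(\bar u+tv_t)\ge F(\bar u)+G(\bar u)$ for $t$ small.

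Next I would expand each term to first order in $t$. For the smooth part, $F(\bar u+tv_t)-F(\bar u)=t\,F'(\bar u)v_t+o(t)=t\int_\Omega\bphi\,v_t\dx+o(t)$ by \eqref{E2.5} and the identification $F'(\bar u)=\bphi$; here I would also use the continuity of $F'$ (Theorem \ref{T2.1}) and $v_t\to v$ to replace $v_t$ by $v$ in the limit. For the convex part, convexity of $g$ gives the one-sided bound
\[
 G(\bar u+tv_t)-G(\bar u)\le \int_\Omega \big(g(\bar u+tv_t)-g(\bar u)\big)\dx,
\]
and since $g$ is convex with $g'(\bar u(x);\cdot)$ its directional derivative, $g(\bar u(x)+tv_t(x))-g(\bar u(x))\ge t\,g'(\bar u(x);v_t(x))$; however for the \emph{upper} estimate needed here I would instead use the difference quotient $\frac{g(\bar u(x)+tv_t(x))-g(\bar u(x))}{t}$, which by convexity is nonincreasing as $t\searrow0$ and converges to $g'(\bar u(x);v(x))$ (after passing $v_t\to v$, using the explicit piecewise formula \eqref{E4.1} and dominated convergence with the Lipschitz bound on $g$ from $\uad$ boundedness). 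Dividing the optimality inequality by $t$, passing to $\limsup_{t\searrow0}$, and combining the two expansions yields $\int_\Omega\bphi\,v+g'(\bar u(x);v(x))\dx\ge0$.

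The main obstacle is the non-smoothness of $g$ at the origin: the term $\int_\Omega g'(\bar u(x);v(x))\dx$ is only sublinear (not linear) in $v$, and one must be careful that the difference-quotient bound for $G$ is an \emph{upper} bound so that it lands on the correct side of the inequality, and that the pointwise limit of the difference quotients is integrable and dominated — this is where the explicit case distinction for $g$ (either the piecewise-quadratic form or the pure $|u|$ form) and the uniform bound $\|\bar u\|_\infty\le\gamma$ on $\uad$ are used to invoke dominated convergence. A secondary technical point is justifying $v_t\to v$ passes through $g'(\bar u(x);\cdot)$; since $g'(\bar u(x);\cdot)$ is globally Lipschitz in its direction argument with constant $\max(\alpha\gamma,\sqrt{2\alpha\beta},\frac{\alpha\gamma}{2}+\frac\beta\gamma)$ uniformly in $x$, we get $|g'(\bar u(x);v_t(x))-g'(\bar u(x);v(x))|\le C|v_t(x)-v(x)|$, so the difference goes to $0$ in $L^1(\Omega)$, closing the argument.
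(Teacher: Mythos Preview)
The paper does not include a proof of Theorem \ref{T4.1}; the result is stated as standard and the discussion moves directly to the pointwise reformulation \eqref{eq42}. Your approach --- expanding $F$ to first order, using monotonicity of difference quotients for the convex part $G$, and approximating tangent directions by feasible ones --- is precisely the standard argument one would supply here, and it is essentially correct.

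Two small points deserve attention. First, your construction of $v_t$ invokes $\|v\|_\infty$, but $v\in T_\uad(\bar u)\subset L^2(\Omega)$ need not be bounded. The clean fix (which the paper itself employs in the proof of Theorem \ref{T3.6}) is to first replace $v$ by $v_k:=\proj_{[-k,k]}(v)\in T_\uad(\bar u)\cap L^\infty(\Omega)$, prove the inequality for each bounded $v_k$, and then pass to the limit $k\to\infty$ using the $L^2$-continuity of $v\mapsto\int_\Omega\bphi\,v+g'(\bar u;v)\dx$ that you correctly identify at the end of your proposal. Second, the displayed bound $G(\bar u+tv_t)-G(\bar u)\le\int_\Omega\big(g(\bar u+tv_t)-g(\bar u)\big)\dx$ is in fact an equality by definition of $G$; what you actually need (and subsequently use) is that the difference quotient of $G$ converges to $G'(\bar u;v)$, which for convex integrands follows from monotone or dominated convergence together with the uniform Lipschitz bound on $g$ on $[-\gamma,\gamma]$ that you mention. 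With these cosmetic repairs the argument goes through.
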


By standard arguments, inequality \eqref{eq41} is equivalent to the pointwise inequality
\begin{equation}\label{eq42}
 \bphi(x)v + g'(\bar u(x);v) \ge 0
 \quad \text{ for a.a.}\, x\in \Omega, \ \forall v\in T_{[-\gamma,\gamma]}(\bar u(x)),
\end{equation}
where
\[
T_{[-\gamma,\gamma]}(\bar u(x)) = \{v \in \mathbb{R} : v \left\{\begin{array}{cl}\ge 0 & \text{if } \bar u(x) = -\gamma,\\\le 0 & \text{if } \bar u(x) = +\gamma.\end{array}\right.\}
\]
In addition, this inequality is equivalent to the Pontryagin maximum principle for \Pbpc due to the
convexity of $g$.
In the case $\sqrt{\frac{2\beta}\alpha} \ge \gamma$, the function $G$ is
a multiple of the $L^1(\Omega)$-norm, which implies a certain sparsity structure of optimal
controls. A similar result is true for the $\sqrt{\frac{2\beta}\alpha} < \gamma$ as well.
Here, we have the following result, which is an analogue to Corollary \ref{C3.3}.

\begin{lemma}
\label{L4.2}
Suppose $\sqrt{\frac{2\beta}\alpha} < \gamma$.
Let $\bar u$ be stationary point of \Pbpc.
Then we have the implications
\[\begin{aligned}
  |\bphi(x)| < \sqrt{2\alpha\beta}  \ &\Rightarrow \ \bar u(x)=0,\\
  \bphi(x) = -\sqrt{2\alpha\beta}  \ &\Rightarrow \ \bar u(x) \in [0,+\sqrt{\frac{2\beta}{\alpha}}],\\
    \bphi(x) = +\sqrt{2\alpha\beta}  \ &\Rightarrow \ \bar u(x) \in [-\sqrt{\frac{2\beta}{\alpha}},0],\\
  |\bphi(x)| > \sqrt{2\alpha\beta}  \ &\Rightarrow \ \bar u(x) = \proj_{[-\gamma,\gamma]}(-\frac1\alpha \bphi(x)),
  \end{aligned}
\]
 for almost all $x\in \Omega$.
If
 \[
  \big|\{x\in \Omega: \  |\bphi(x)| = \sqrt{2\alpha\beta} \}\big| =0
 \]
 is satisfied
then
\[
 \bar u(x) \ne0 \ \Rightarrow \ |\bar u(x)| \ge \sqrt{\frac{2\beta}{\alpha}}
\]
holds for almost all $x\in \Omega$.
\end{lemma}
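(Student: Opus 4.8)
The plan is to use the convexity of $g$ to turn the pointwise optimality condition \eqref{eq42} into a scalar convex minimization problem, and then to run an elementary monotonicity argument. First I would observe that, $g$ being convex, the function $h_x(u):=\bphi(x)u+g(u)$ is convex on $[-\gamma,\gamma]$, and \eqref{eq42} says precisely that its directional derivative $h_x'(\bar u(x);v)=\bphi(x)v+g'(\bar u(x);v)$ is nonnegative for all $v\in T_{[-\gamma,\gamma]}(\bar u(x))$. Since this first-order condition is also sufficient for a convex function, $\bar u(x)$ is a global minimizer of $h_x$ over $[-\gamma,\gamma]$ for almost every $x$. It then suffices to determine, for fixed $\varphi\in\R$, the minimizer set of $u\mapsto\varphi u+g(u)$ on $[-\gamma,\gamma]$ as a function of $\varphi$; this is the analogue for the convex hull $g$ of the scalar analysis in Lemma \ref{L3.2}.

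Next I would read off from \eqref{E4.1} the one-sided derivatives of $h_x$: $h_x'(u)=\varphi+\alpha u$ on $\{|u|>\sqrt{2\beta/\alpha}\}$, $h_x'(u)=\varphi+\sqrt{2\alpha\beta}\sign(u)$ on $\{0<|u|<\sqrt{2\beta/\alpha}\}$, and the subdifferential of $h_x$ at $0$ equals $[\varphi-\sqrt{2\alpha\beta},\varphi+\sqrt{2\alpha\beta}]$. If $|\varphi|<\sqrt{2\alpha\beta}$ then $0$ lies in the interior of this subdifferential and $h_x$ is strictly increasing for $u>0$ and strictly decreasing for $u<0$, so $\bar u(x)=0$. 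If $\varphi=-\sqrt{2\alpha\beta}$ then $h_x$ is constant on $[0,\sqrt{2\beta/\alpha}]$, strictly decreasing on $[-\gamma,0]$, and nondecreasing on $[\sqrt{2\beta/\alpha},\gamma]$ (using $\sqrt{2\beta/\alpha}<\gamma$), so the minimizer set is exactly $[0,\sqrt{2\beta/\alpha}]$; the case $\varphi=+\sqrt{2\alpha\beta}$ is symmetric. Finally, if $\varphi<-\sqrt{2\alpha\beta}$ (and symmetrically for $\varphi>\sqrt{2\alpha\beta}$), then $h_x$ is strictly decreasing on $[-\sqrt{2\beta/\alpha},\sqrt{2\beta/\alpha}]$, while on $[\sqrt{2\beta/\alpha},\gamma]$ it is controlled by $h_x'(u)=\varphi+\alpha u$, whose unconstrained root $-\varphi/\alpha$ satisfies $-\varphi/\alpha>\sqrt{2\beta/\alpha}$; distinguishing $-\varphi/\alpha\le\gamma$ from $-\varphi/\alpha>\gamma$ gives in both cases $\bar u(x)=\proj_{[-\gamma,\gamma]}(-\frac1\alpha\varphi)$. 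This proves the four implications.

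For the final claim, assume $|\{x\in\Omega:\ |\bphi(x)|=\sqrt{2\alpha\beta}\}|=0$. Then almost every $x$ satisfies either $|\bphi(x)|<\sqrt{2\alpha\beta}$, where $\bar u(x)=0$, or $|\bphi(x)|>\sqrt{2\alpha\beta}$, where $\bar u(x)=\proj_{[-\gamma,\gamma]}(-\frac1\alpha\bphi(x))$ and $|\frac1\alpha\bphi(x)|>\sqrt{2\beta/\alpha}$. Because $\sqrt{2\beta/\alpha}<\gamma$, the projection onto $[-\gamma,\gamma]$ does not decrease the modulus below $\sqrt{2\beta/\alpha}$, so in the second alternative $|\bar u(x)|\ge\sqrt{2\beta/\alpha}$. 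Hence $\bar u(x)\ne0$ rules out the first alternative and forces $|\bar u(x)|\ge\sqrt{2\beta/\alpha}$, as claimed.

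I expect the points requiring care to be (i) the reduction from the integral inequality \eqref{eq41} to the pointwise and then scalar optimality statements, which is standard but uses the convexity of $g$ together with a measurable selection argument, and (ii) the bookkeeping in the degenerate cases $\varphi=\pm\sqrt{2\alpha\beta}$, where one must verify that the minimizer set is the whole interval asserted in the lemma and not a proper subset; the remaining cases amount to a routine sign discussion of $h_x'$.
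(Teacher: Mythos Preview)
Your proof is correct and follows essentially the same approach as the paper's: both reduce to the pointwise inequality \eqref{eq42} and perform a case analysis on $|\bphi(x)|$ relative to $\sqrt{2\alpha\beta}$ using the explicit form of $g'$. The only difference is presentational---you first recast \eqref{eq42} as the statement that $\bar u(x)$ minimizes the convex function $h_x(u)=\bphi(x)u+g(u)$ on $[-\gamma,\gamma]$ and then read off the minimizers via monotonicity of $h_x'$, whereas the paper stays with the variational inequality and argues each case by contradiction (choosing a specific test direction $v$ and deriving a sign violation); the underlying computations are the same.
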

\begin{proof}
Let $|\bphi(x)| < \sqrt{2\alpha\beta}$.
Suppose $\bar u(x)>0$. Choose $v<0$. Then, \eqref{eq42} implies that {$\bphi(x)v + g'(\bar u(x),v)  \ge 0$}. From the expression for $g'(\bar u(x),v)$ we get
\begin{align*}
& \bphi(x) + \alpha \bar u(x) \le 0 \text{ if } \bar u(x) \ge \sqrt{\frac{2\beta}{\alpha}} \Rightarrow \sqrt{2\alpha\beta} \le \alpha\bar u(x) \le -\bphi(x),\\
& \bphi(x) + \sqrt{2\alpha\beta} \le 0 \text{ if } \bar u(x) < \sqrt{\frac{2\beta}{\alpha}} \Rightarrow \sqrt{2\alpha\beta} \le -\bphi(x).
\end{align*}
In any of these case we get a contradiction with the fact that $|\bphi(x)| < \sqrt{2\alpha\beta}$. A similar contradiction is obtained for the case $\bar u(x) < 0$.
Now, we assume that $\bphi(x) = -\sqrt{2\alpha\beta}$ and we prove that $0 \le \bar u(x) \le \sqrt{\frac{2\beta}{\alpha}}$. We argue by contradiction and we assume that $\bar u(x) > \sqrt{\frac{2\beta}{\alpha}}$. Taking again $v < 0$ in \eqref{eq42} we deduce that
\[
0 \ge \bphi(x) + \alpha\bar u(x) > -\sqrt{2\alpha\beta} + \alpha\sqrt{\frac{2\beta}{\alpha}}  = 0
\]
and we get a contradiction. If $\bar u(x) < 0$, then selecting $v > 0$ it is easy to check that $g'(\bar u(x);v) < 0$ and, hence, $\bphi(x)v + g'(\bar u(x);v) < 0$, which contradicts \eqref{eq42}. Analogously we prove the case $\bphi(x) = +\sqrt{2\alpha\beta}$.

Finally, we analyze the case $|\bphi(x)| > \sqrt{2\alpha\beta}$. First we prove that $|\bar u(x)| > \sqrt{\frac{2\beta}{\alpha}}$. Indeed, in the contrary case \eqref{eq42} implies that
\begin{align*}
&(\bphi(x) + \sqrt{2\alpha\beta}\sign(\bar u(x)))v \ge 0 \text{ if } \bar u(x) \neq 0,\\
&\bphi(x)v + \sqrt{2\alpha\beta}|v| \ge 0 \text{ if } \bar u(x) = 0
\end{align*}
holds for every $v \in \mathbb{R}$. However, taking $v = -\sign(\bphi(x))$ we get a contradiction. Hence, we have that $|\bar u(x)| > \sqrt{\frac{2\alpha}{\beta}}$ and, consequently, \eqref{eq42} implies that $(\bphi(x) + \alpha\bar u(x))v \ge 0$ $\forall v \in T_{[-\gamma,\gamma]}(\bar u(x))$. Taking into account that $v - \bar u(x) \in T_{[-\gamma,\gamma]}(\bar u(x))$ for every $v \in [-\gamma,+\gamma]$, we have that $(\bphi(x) + \alpha\bar u(x))(v-\bar u(x)) \ge 0$ $\forall v \in [-\gamma,+\gamma]$, which is well known to be equivalent to $\bar u(x) = \proj_{[-\gamma,\gamma]}(-\frac1\alpha \bphi(x))$.

Finally, if $|\bphi(x)|= \sqrt{2\alpha\beta}$ is only true on a set of zero measure, then $0<|\bar u(x)| < \sqrt{\frac{2\beta}{\alpha}}$ is only true on a set of zero measure, which proves the second claim.
\end{proof}

\begin{corollary}
Suppose $\sqrt{\frac{2\beta}\alpha} < \gamma$.
 Let $\bar u$ be a local (global) solution of \Pbpc. Assume that
 \[
  \big|\{x\in \Omega: \  |\bphi(x)| = \sqrt{2\alpha\beta} \}\big| =0
 \]
 Then $\bar u$ is a local (global) solution of \Pb.
 \label{C4.3}
\end{corollary}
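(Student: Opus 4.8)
The plan is to exploit two facts. First, since $g$ is the convex hull of the integrand $r\mapsto\frac\alpha2 r^2+\beta|r|_0$ on $[-\gamma,\gamma]$, we have the pointwise minorization $g(r)\le \frac\alpha2 r^2+\beta|r|_0$ for all $r\in[-\gamma,\gamma]$, and hence $F(u)+G(u)\le J(u)$ for every $u\in\uad$. Second, at the point $\bar u$ in question this minorization becomes an equality, so that $F(\bar u)+G(\bar u)=J(\bar u)$. Once both facts are available, the transfer of local resp.\ global optimality follows by an immediate sandwich argument. (Note that the standing hypothesis $\sqrt{\tfrac{2\beta}\alpha}<\gamma$ forces $\alpha>0$, so the first branch of $g$ and Lemma \ref{L4.2} are the relevant ones.)

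For the second fact, I would first record that a local (or global) solution $\bar u$ of \Pbpc{} is in particular a stationary point, so that Theorem \ref{T4.1}, the pointwise inequality \eqref{eq42}, and therefore Lemma \ref{L4.2} all apply to $\bar u$ and its adjoint state $\bphi$. The hypothesis $|\{x\in\Omega:\ |\bphi(x)|=\sqrt{2\alpha\beta}\}|=0$ is exactly what the last assertion of Lemma \ref{L4.2} requires, and it yields
\[
 \bar u(x)\ne 0 \ \Rightarrow\ |\bar u(x)|\ge \sqrt{\tfrac{2\beta}\alpha}\quad\text{for a.a. }x\in\Omega.
\]
By the equivalence \eqref{eq405}, this means $g(\bar u(x))=\frac\alpha2\bar u(x)^2+\beta|\bar u(x)|_0$ for a.a.\ $x$, hence $G(\bar u)=\frac\alpha2\|\bar u\|_{L^2(\Omega)}^2+\beta\|\bar u\|_0$, and therefore $F(\bar u)+G(\bar u)=J(\bar u)$.

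It remains to combine the two facts. In the global case, if $\bar u$ minimizes \Pbpc{} over $\uad$, then for every $u\in\uad$ we get $J(\bar u)=F(\bar u)+G(\bar u)\le F(u)+G(u)\le J(u)$, so $\bar u$ is a global solution of \Pb. In the local case, local optimality of $\bar u$ for \Pbpc{} in the $L^2(\Omega)$-sense provides $\rho>0$ with $F(\bar u)+G(\bar u)\le F(u)+G(u)$ for all $u\in\uad\cap \bar B_\rho(\bar u)$; chaining this with the same two inequalities gives $J(\bar u)\le J(u)$ on that ball, i.e.\ $\bar u$ is a local solution of \Pb.

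I do not expect a genuine obstacle here; the only points requiring care are the explicit verification that a solution of \Pbpc{} satisfies the stationarity condition needed to invoke Lemma \ref{L4.2}, and the observation that the null-set hypothesis is precisely the one under which Lemma \ref{L4.2} delivers the sparsity property $|\bar u|\ge\sqrt{2\beta/\alpha}$ on $\{\bar u\ne0\}$, which in turn is exactly the condition appearing in \eqref{eq405}.
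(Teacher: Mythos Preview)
Your proposal is correct and follows essentially the same route as the paper: both arguments use Lemma~\ref{L4.2} together with the null-set hypothesis and \eqref{eq405} to obtain $G(\bar u)=j(\bar u)$, and then the sandwich $J(\bar u)=F(\bar u)+G(\bar u)\le F(u)+G(u)\le J(u)$ on the relevant set of competitors. Your additional remark that the standing hypothesis forces $\alpha>0$ is a useful clarification.
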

\begin{proof}
Let $\bar u$ be a minimum of $F + G$  on some neighborhood $U$ of $\bar u$, i.e., $F(\bar u) + G(\bar u) \le F(u) + G(u)$ for all $u\in U\cap \uad$.
 Then the conclusion of Theorem \ref{T4.1} is valid, and property \eqref{eq42} is satisfied.
 Using the result of Lemma \ref{L4.2} it follows that for almost all $x\in \Omega$ we have $\bar u(x)=0$
 or $|\bar u(x)| \ge\sqrt{\frac{2\beta}{\alpha}}$. By \eqref{eq405}, this implies $j(\bar u) = G(\bar u)$.

 Let now $u\in U\cap \uad$ be given. Then we have the chain of inequalities
 \[
  F(\bar u) + j(\bar u) = F(\bar u) + G(\bar u) \le F(u) + G(u) \le F(u) + j(u),
 \]
which proves the claim.
\end{proof}

Let us conclude this section with a remark on second-order optimality conditions of \Pbpc in the
case $\sqrt{\frac{2\beta}\alpha} \ge \gamma$.
Here, the function $G$ is a multiple of the $L^1(\Omega)$-norm.
Such problems are well studied in the literature.
A sufficient optimality condition is given by \cite[Theorem 3.6]{Casas2012}.
Hence, we will consider the case $\sqrt{\frac{2\beta}\alpha} < \gamma$ from now on.

\subsection{Second-order optimality conditions for the partially convexified problem}

Let us assume now  $\sqrt{\frac{2\beta}\alpha} < \gamma$. In this case, the directional derivative of $g$ at $u$ in direction $h$ is given by
\[
 g'(u;h) =\begin{cases}
         \alpha u h  & \text{ if } |u| \ge \sqrt{\frac{2\beta}\alpha},\\
         \sqrt{2\alpha\beta} \sign(u)h & \text{ if } 0<|u| < \sqrt{\frac{2\beta}\alpha},\\
         \sqrt{2\alpha\beta} |h| & \text{ if } u=0.
        \end{cases}
\]
Clearly $g$ is not differentiable at $u=0$, and it is not twice differentiable at $\pm \sqrt{\frac{2\beta}\alpha}$.
Still let us introduce some kind of second-order directional derivative defined by
\[
 g''(u;h^2) := \begin{cases}
 \alpha h^2  & \text{ if } |u| > \sqrt{\frac{2\beta}\alpha},\\
 \alpha h^2  & \text{ if } u = \sqrt{\frac{2\beta}\alpha}, \ h \ge 0,\\
 \alpha h^2  & \text{ if } u = -\sqrt{\frac{2\beta}\alpha}, \ h \le 0,\\
0 & \text{ otherwise,}
        \end{cases}
\]
and
\[
 G''(u,h^2) := \int_\Omega g''(u(x); h(x)^2 ) \dx.
\]
This choice of $g''$ is justified by the Taylor expansion provided by the next lemma.

\begin{lemma}
Let $u\in \R$ be given.
Then there is $\delta=\delta(u)>0$ such that
\[
 g(u+h) - g(u) - g'(u;h) - \frac12 g''(u;h^2) = 0
\]
for all $h$ with $|h|\le \delta$.
\end{lemma}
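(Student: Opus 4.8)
The statement is a purely local, one-dimensional fact about the convex function $g$ on $\R$, so the plan is to check it case by case according to the position of $u$ relative to the nonsmooth points $0$ and $\pm\sqrt{2\beta/\alpha}$. For each fixed $u$, I will pick $\delta=\delta(u)>0$ small enough that the interval $[u-\delta,u+\delta]$ avoids all nonsmooth points that are not equal to $u$ itself; then on that interval $g$ is either purely quadratic or purely linear, and I will verify that the second-order Taylor polynomial with the ``derivatives'' $g'(u;\cdot)$ and $g''(u;\cdot^2)$ reproduces $g(u+h)$ exactly.

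\textbf{The cases.} First, if $|u|>\sqrt{2\beta/\alpha}$, choose $\delta = \frac12(|u|-\sqrt{2\beta/\alpha})$; on $[u-\delta,u+\delta]$ we have $|u+h|>\sqrt{2\beta/\alpha}$, hence $g(v)=\frac\alpha2 v^2+\beta$ there, and the identity reads $\frac\alpha2(u+h)^2+\beta - \frac\alpha2 u^2 - \beta - \alpha u h - \frac12\alpha h^2 = 0$, which holds. Second, if $0<|u|<\sqrt{2\beta/\alpha}$, choose $\delta=\frac12\min(|u|,\sqrt{2\beta/\alpha}-|u|)$; then $\sign(u+h)=\sign(u)$ and $|u+h|<\sqrt{2\beta/\alpha}$ on the interval, so $g(v)=\sqrt{2\alpha\beta}\,|v|=\sqrt{2\alpha\beta}\,\sign(u)v$ there, $g''(u;h^2)=0$, and the identity is the linearity of $v\mapsto\sqrt{2\alpha\beta}\,\sign(u)v$. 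Third, if $u=0$, choose $\delta=\frac12\sqrt{2\beta/\alpha}$; then $|h|<\sqrt{2\beta/\alpha}$, so $g(h)=\sqrt{2\alpha\beta}\,|h|$, while $g(0)=0$, $g'(0;h)=\sqrt{2\alpha\beta}\,|h|$, $g''(0;h^2)=0$, and the identity holds. Fourth, if $u=\sqrt{2\beta/\alpha}$ (the case $u=-\sqrt{2\beta/\alpha}$ being symmetric), choose $\delta=\frac12\sqrt{2\beta/\alpha}$; for $h\ge0$ one has $u+h\ge\sqrt{2\beta/\alpha}$ so $g(u+h)=\frac\alpha2(u+h)^2+\beta$, and since $g(u)=\frac\alpha2 u^2+\beta=\beta+\beta=\sqrt{2\alpha\beta}\cdot\sqrt{2\beta/\alpha}$ one checks $g(u+h)-g(u)=\alpha u h + \frac\alpha2 h^2 = g'(u;h)+\frac12 g''(u;h^2)$; for $h<0$ one has $0<u+h<\sqrt{2\beta/\alpha}$ so $g(u+h)=\sqrt{2\alpha\beta}(u+h)$, and using $g(u)=\sqrt{2\alpha\beta}\,u$, $g'(u;h)=\alpha u h=\sqrt{2\alpha\beta}\,h$ (since $\alpha u=\alpha\sqrt{2\beta/\alpha}=\sqrt{2\alpha\beta}$), $g''(u;h^2)=0$, the identity again holds.

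\textbf{Main point.} There is no genuine obstacle here; the only thing to be careful about is the junction points $u=\pm\sqrt{2\beta/\alpha}$, where the asymmetric definition of $g''$ (taking the value $\alpha h^2$ only for $h$ on the ``quadratic side'' and $0$ on the ``linear side'') is precisely what is needed to make the Taylor identity exact, and where one must use the continuity relations $g(\pm\sqrt{2\beta/\alpha})=\beta\cdot(\text{both formulas agree})$ and $\alpha\sqrt{2\beta/\alpha}=\sqrt{2\alpha\beta}$ that glue the quadratic and linear branches together in a $C^1$ fashion. Assembling the four cases, one sets $\delta(u)$ to be the value chosen in the applicable case, and the proof is complete.
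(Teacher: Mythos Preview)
Your proof is correct and follows essentially the same approach as the paper: a case analysis according to the position of $u$ relative to the nonsmooth points $0,\pm\sqrt{2\beta/\alpha}$, with direct verification on each piece. The paper merely compresses your first two cases into the single observation that away from the three exceptional points $g$ is locally a polynomial of degree at most two; your version is more explicit about the choice of $\delta$ and the arithmetic at the junction $u=\pm\sqrt{2\beta/\alpha}$, but the content is identical.
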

\begin{proof}
Obviously, the claim is true if $u\not\in \{-\sqrt{\frac{2\beta}\alpha},0,\sqrt{\frac{2\beta}\alpha}\}$,
because $g$ is a polynomial of degree at most two near such values of $u$ with second-derivative given by the expression for $g''(u,\cdot)$ above.
First, let us consider the case $u=0$. Let $|h|<\sqrt{\frac{2\beta}\alpha}$. Then
clearly $g(u+h) - g(u) - g'(u;h) - \frac12 g''(u;h^2)=0$.
Second, let $u= \sqrt{\frac{2\beta}\alpha}$  and $h>0$. Then both $u$ and $u+h$ lie on the quadratic branch of $g$, which means
that the remainder is zero. If $-\sqrt{\frac{2\beta}\alpha} < h <0$,
 \[
 g(u+h) - g(u) - g'(u;h) - \frac12 g''(u;h^2) = \sqrt{2\alpha\beta}\big( (u+h) - u - h  \big)- 0 =0.
 \]
 The case $u=- \sqrt{\frac{2\beta}\alpha}$ follows analogously.
\end{proof}

For second-order optimality conditions, only lower bounds of this remainder term are of importance.
Here, we have the following result.

\begin{lemma}\label{L4.6}
Let $u,h\in  \R$ be given.
Then it holds
\[
  g(u+h) - g(u) - g'(u;h) - \frac12 g''(u;h^2) \ge
  \begin{cases} 0 & \text{ if } |u| \le \sqrt{\frac{2\beta}\alpha}\\
   -\frac\alpha2 \left[\Big(\sqrt{\frac{2\beta}\alpha}-|u+h|\Big)_+\right]^2 & \text{ if } |u| > \sqrt{\frac{2\beta}\alpha}.
  \end{cases}
  \]
\end{lemma}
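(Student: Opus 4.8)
The inequality is trivial whenever the exact Taylor remainder vanishes, so the work is to identify the (few) values of $u$ and $h$ where it does not, and to obtain the stated quadratic lower bound there. I would split into the two cases dictated by the right-hand side: $|u| \le \sqrt{\frac{2\beta}\alpha}$ and $|u| > \sqrt{\frac{2\beta}\alpha}$.

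\emph{Case $|u| \le \sqrt{\frac{2\beta}\alpha}$.} Here I claim the remainder is in fact $\ge 0$ (often $=0$). If $u \notin \{-\sqrt{\frac{2\beta}\alpha}, 0, \sqrt{\frac{2\beta}\alpha}\}$, then $g$ is affine (with slope $\pm\sqrt{2\alpha\beta}$) in a neighbourhood of $u$, $g''(u;\cdot)=0$, and $g'(u;h)$ is linear in $h$; since $g$ is convex, $g(u+h) \ge g(u) + g'(u;h)$, so the remainder is $\ge 0$ (it is $=0$ for small $h$). For $u=0$: $g'(0;h)=\sqrt{2\alpha\beta}|h|$, $g''(0;h^2)=0$, and $g(h) - \sqrt{2\alpha\beta}|h| \ge 0$ for all $h$ by definition of $g$ (either $g(h)=\frac\alpha2 h^2+\beta \ge \sqrt{2\alpha\beta}|h|$ by AM--GM, or $g(h)=\sqrt{2\alpha\beta}|h|$ exactly). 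For $u = \pm\sqrt{\frac{2\beta}\alpha}$: on the side where $u+h$ stays on the quadratic branch the remainder is $0$ as in the previous lemma, and on the other side one uses convexity ($g''(u;h^2)=0$ there by definition) to get $g(u+h) \ge g(u)+g'(u;h)$, hence remainder $\ge 0$.

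\emph{Case $|u| > \sqrt{\frac{2\beta}\alpha}$.} By symmetry assume $u > \sqrt{\frac{2\beta}\alpha}$; then $g'(u;h)=\alpha u h$ and $g''(u;h^2)=\alpha h^2$ for all $h$, so the remainder equals $g(u+h) - \frac\alpha2 u^2 - \beta - \alpha u h - \frac\alpha2 h^2 = g(u+h) - \big(\frac\alpha2(u+h)^2 + \beta\big)$. If $|u+h|\ge\sqrt{\frac{2\beta}\alpha}$ this is exactly $0$; if $|u+h|<\sqrt{\frac{2\beta}\alpha}$ it equals $\sqrt{2\alpha\beta}\,|u+h| - \frac\alpha2(u+h)^2 - \beta = -\frac\alpha2\big(\sqrt{\tfrac{2\beta}\alpha} - |u+h|\big)^2$, which (since then $\big(\sqrt{\tfrac{2\beta}\alpha}-|u+h|\big)_+ = \sqrt{\tfrac{2\beta}\alpha}-|u+h|$) is precisely the claimed bound. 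When $|u+h|\ge\sqrt{\frac{2\beta}\alpha}$ the bracket $\big(\sqrt{\tfrac{2\beta}\alpha}-|u+h|\big)_+$ vanishes and the bound reads $0$, consistent with the remainder being $0$. This closes the case, and hence the proof.

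\emph{Main obstacle.} There is no serious analytic difficulty; the only thing to be careful about is the bookkeeping of sub-cases at the three non-smooth points $u \in \{-\sqrt{\frac{2\beta}\alpha},0,\sqrt{\frac{2\beta}\alpha}\}$ and the sign of $h$, together with a clean use of convexity of $g$ (which gives $g(u+h)\ge g(u)+g'(u;h)$ whenever $g''(u;\cdot)\equiv 0$, i.e. exactly in the regime $|u|\le\sqrt{\frac{2\beta}\alpha}$) versus the explicit completion-of-the-square computation on the linear branch in the regime $|u|>\sqrt{\frac{2\beta}\alpha}$. One should also double-check the borderline $|u+h| = \sqrt{\frac{2\beta}\alpha}$, where both formulas for $g$ agree, so there is no ambiguity.
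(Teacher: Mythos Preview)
Your proof is correct and follows essentially the same approach as the paper: use convexity of $g$ (together with $g''(u;\cdot)=0$) for $|u|\le\sqrt{2\beta/\alpha}$, handle the boundary $u=\pm\sqrt{2\beta/\alpha}$ by splitting according to the sign of $h$, and carry out the explicit completion-of-the-square computation for $|u|>\sqrt{2\beta/\alpha}$. Your compact rewriting of the remainder as $g(u+h)-\bigl(\tfrac\alpha2(u+h)^2+\beta\bigr)$ in the last case is a nice touch, and your separate treatment of $u=0$ is correct but unnecessary, since convexity already covers it.
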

\begin{proof}
 If $|u|<\sqrt{\frac{2\beta}\alpha}$ then the claim follows from the convexity of $g$.
 Let now $u=\sqrt{\frac{2\beta}\alpha}$. Then for $h<0$ the claim follows again from the convexity of $g$,
 while for $h>0$ the claim follows from the quadratic nature of $g$ on $[\sqrt{\frac{2\beta}\alpha}, +\infty)$.

 Consider now the case $|u|>\sqrt{\frac{2\beta}\alpha}$. If $|u+h|\ge\sqrt{\frac{2\beta}\alpha}$ then $g(u+h) - g(u) - g'(u;h) - \frac12 g''(u;h^2)=0$.
Suppose $|u+h|<\sqrt{\frac{2\beta}\alpha}$. Then we find
\[\begin{aligned}
 g(u+h) - g(u) - g'(u;h) - \frac12 g''(u;h^2) &=  \sqrt{2\alpha\beta}|u+h| - \frac\alpha2 (u+h)^2 - \beta\\
& =-\frac\alpha2 \left(\sqrt{\frac{2\beta}\alpha} - |u+h|\right)^2,
\end{aligned}\]
which finishes the proof.
 \end{proof}

Using this pointwise inequality, we can prove a lower bound of a Taylor expansion of the
integral functional $G$.

\begin{lemma}\label{L4.7}
Let $p>2$.
Let $u\in L^p(\Omega)$, then
 \[
  G(u+h) - G(u) - G'(u;h) - \frac12G''(u;h^2) \ge o(\|h\|_{L^p(\Omega_u)}^2)
 \]
for $h\to0$ in $L^p(\Omega)$ with
 \[
  \Omega_{u} := \left\{ x\in\Omega:\ |u(x)| > \sqrt{\frac{2\beta}\alpha}\right\}.
 \]
\end{lemma}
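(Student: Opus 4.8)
The plan is to integrate the pointwise inequality from Lemma~\ref{L4.6} over $\Omega$ and show that the only nonzero contribution, which comes from the set $\Omega_u$, is of the claimed order. First I would observe that by Lemma~\ref{L4.6}, for almost every $x\in\Omega$,
\[
 g(u(x)+h(x)) - g(u(x)) - g'(u(x);h(x)) - \tfrac12 g''(u(x);h(x)^2) \ge 0
\]
whenever $|u(x)|\le\sqrt{\tfrac{2\beta}\alpha}$, while on $\Omega_u$ the same quantity is bounded below by $-\tfrac\alpha2\big[(\sqrt{\tfrac{2\beta}\alpha}-|u(x)+h(x)|)_+\big]^2$. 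Integrating and using $\int_\Omega|\cdot|_0$-notation appropriately, the left-hand side of the asserted inequality is therefore at least
\[
 -\frac\alpha2 \int_{\Omega_u} \left[\Big(\sqrt{\tfrac{2\beta}\alpha}-|u(x)+h(x)|\Big)_+\right]^2 \dx.
\]
It remains to show that this integral is $o(\|h\|_{L^p(\Omega_u)}^2)$ as $h\to0$ in $L^p(\Omega)$.

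The key point is that on $\Omega_u$ we have $|u(x)|>\sqrt{\tfrac{2\beta}\alpha}$ strictly, so the integrand $[(\sqrt{\tfrac{2\beta}\alpha}-|u(x)+h(x)|)_+]^2$ is nonzero only where $|h(x)|$ is large enough to push $|u(x)+h(x)|$ below the threshold, i.e. where $|h(x)| \ge |u(x)| - \sqrt{\tfrac{2\beta}\alpha} =: d(x) > 0$. On that subset one also has the crude bound $[(\sqrt{\tfrac{2\beta}\alpha}-|u(x)+h(x)|)_+]^2 \le |h(x)|^2$, since $|\sqrt{\tfrac{2\beta}\alpha}-|u(x)+h(x)|| \le |u(x) - |u(x)+h(x)|| \le |h(x)|$ in the region where the positive part is active (there $\sqrt{\tfrac{2\beta}\alpha}$ lies between $|u(x)+h(x)|$ and $|u(x)|$). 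Hence
\[
 \int_{\Omega_u} \left[\Big(\sqrt{\tfrac{2\beta}\alpha}-|u(x)+h(x)|\Big)_+\right]^2 \dx
 \le \int_{\{x\in\Omega_u:\, |h(x)|\ge d(x)\}} |h(x)|^2 \dx.
\]

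The final step is to argue that $\int_{\{x\in\Omega_u:\,|h(x)|\ge d(x)\}} |h(x)|^2\dx = o(\|h\|_{L^p(\Omega_u)}^2)$ for $h\to0$ in $L^p$. Writing $A_h := \{x\in\Omega_u : |h(x)|\ge d(x)\}$, I would apply H\"older's inequality with exponents $p/2$ and $p/(p-2)$ to get
\[
 \int_{A_h} |h|^2 \dx \le \|h\|_{L^p(\Omega_u)}^2 \, |A_h|^{(p-2)/p},
\]
so it suffices to show $|A_h|\to0$ as $h\to0$ in $L^p(\Omega)$. This follows because convergence in $L^p$ implies convergence in measure: for any $\epsilon>0$, decompose $\Omega_u = \{d \ge \epsilon\} \cup \{d<\epsilon\}$; on $\{d\ge\epsilon\}$ the set where $|h|\ge d\ge\epsilon$ has measure tending to $0$ by Chebyshev's inequality applied to $\|h\|_{L^p}\to0$, and $|\{x\in\Omega_u: d(x)<\epsilon\}|$ can be made arbitrarily small by choosing $\epsilon$ small since $d>0$ a.e.\ on $\Omega_u$ (so $\{d<\epsilon\}\downarrow\emptyset$ as $\epsilon\downarrow0$). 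The main obstacle is precisely this last measure-theoretic argument: one must exploit that $d=|u|-\sqrt{2\beta/\alpha}$ is strictly positive on $\Omega_u$ to trap the ``bad'' set, rather than merely bounding the integrand crudely — a uniform lower bound on $d$ is unavailable, so the $\epsilon$-splitting is essential. Combining the three displays yields the claim with the explicit $o$-term $-\tfrac\alpha2 \|h\|_{L^p(\Omega_u)}^2 |A_h|^{(p-2)/p}$.
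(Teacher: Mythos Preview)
Your proof is correct and follows essentially the same route as the paper: integrate the pointwise estimate of Lemma~\ref{L4.6}, bound the integrand by $|h|^2$ on the set where the positive part is active, and apply H\"older's inequality together with the fact that this set has measure tending to zero. The paper's proof is terser (it passes to a subsequence converging pointwise and simply asserts the measure convergence), while your $\epsilon$-splitting argument via convergence in measure makes this step explicit; otherwise the two arguments coincide.
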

\begin{proof}
Let $h_k$ be given such that $h_k \to 0$ in $L^p(\Omega)$ and pointwise, $p>2$.
Due to Lemma \ref{L4.6}, we have the lower bound
\[
   G(u+h_k) - G(u) - G'(u;h_k) - \frac12G''(u;h_k^2)
   \ge
   -\frac\alpha2 \int_{\Omega_{u,u+h_k}}\left(\sqrt{\frac{2\beta}\alpha} - |u+h_k|\right)^2\dx,
\]
where
\[
 \Omega_{u,u+h_k}:=\left\{x\in\Omega:\ |u(x)|>\sqrt{\frac{2\beta}\alpha}>|u(x)+h_k(x)| \right\}.
\]
Clearly the measure of $\Omega_{u,u+h_k}$ tends to zero for $k\to\infty$.
In addition $(|u(x)+h_k(x)|-\sqrt{\frac{2\beta}\alpha}) ^2 \le h_k(x)^2$ holds on this set.
Using H\"older's inequality, we thus find
\[
   G(u+h_k) - G(u) - G'(u;h_k) - \frac12G''(u;h_k^2) \ge -\frac\alpha2 |\Omega_{u,u+h_k}|^{1-\frac2p} \|h_k\|_{L^p(\Omega)}^2,
\]
which proves the claim.
\end{proof}

\begin{remark}\label{R4.8}
Let us comment that the previous result is not true in general for $p=2$.
To this end, let $\Omega=(0,1)$, $\alpha=\beta=1$, $u(x) = 2>\sqrt{\frac{2\beta}\alpha}=\sqrt2$, $h_k(x):=-\chi_{(0,\frac1k)}(x)$.
Then $\|h_k\|_{L^2(\Omega)}^2 = \frac1k$. In addition, we have
\begin{multline*}
 G(u+h_k) - G(u) - G'(u;h_k) - \frac12 G''(u;h_k^2)
 = -\int_\Omega \frac12 \left[\Big(\sqrt2-|u(x)+h_k(x)|\Big)_+\right]^2\dx\\
 = - \frac12 \frac1k (\sqrt2-1)^2 = - \frac12(\sqrt2-1)^2 \|h_k\|_{L^2(\Omega)}^2.
\end{multline*}
\end{remark}

Let us introduce the critical cone for \Pbpc by
\[
C_{\textrm{pc,}\bar u} := \{ v\in T_\uad(\bar u): \ \bar\varphi(x)v(x) + g'(\bar u(x);v(x)) = 0 \}.
\]
We have the  following characterization of $ C_{\textrm{pc,}\bar u}$.

\begin{lemma}
Let $\bar u$ be a stationary point of \Pbpc.
Then, $v\in  C_{\textrm{pc,}\bar u}$ if and only if $v \in T_\uad(\bar u)$ and the following conditions hold for almost all $x\in \Omega$:
\begin{enumerate}
 \item If $|\bar\varphi(x)|< \sqrt{2\alpha\beta}$ then $v(x)=0$,
 \item If $\bar\varphi(x)= +\sqrt{2\alpha\beta}$ and $\bar u(x) = 0$ then $v(x)\le0$,
 \item If $\bar\varphi(x)= -\sqrt{2\alpha\beta}$ and $\bar u(x) = 0$ then $v(x)\ge0$,
 \item If $|\bar\varphi(x)|>\alpha\gamma$  then $v(x)=0$.
\end{enumerate}
\end{lemma}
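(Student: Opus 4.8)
The plan is to verify both directions of the equivalence by reducing the integral condition $\int_\Omega \bar\varphi v + g'(\bar u;v)\,\mathrm dx = 0$ to a pointwise condition and then case-splitting on the value of $\bar u(x)$. The key observation is that, since $\bar u$ is stationary, the variational inequality \eqref{eq42} gives $\bar\varphi(x)v + g'(\bar u(x);v) \ge 0$ for all $v\in T_{[-\gamma,\gamma]}(\bar u(x))$ and a.a.\ $x$. Hence for $v\in T_\uad(\bar u)$ the integrand $\bar\varphi(x)v(x) + g'(\bar u(x);v(x))$ is nonnegative a.e., so its integral vanishes if and only if it vanishes pointwise a.e. Thus $v\in C_{\textrm{pc,}\bar u}$ iff $v\in T_\uad(\bar u)$ and $\bar\varphi(x)v(x) + g'(\bar u(x);v(x)) = 0$ for a.a.\ $x\in\Omega$, and everything reduces to analyzing, for fixed $x$, when the scalar quantity $\bar\varphi(x)v + g'(\bar u(x);v)$ is zero given that it is $\ge0$ on the relevant tangent set.

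First I would dispatch the "easy" cases where $g$ is differentiable at $\bar u(x)$, i.e.\ $\bar u(x)\ne 0$. If $|\bar u(x)| \ge \sqrt{2\beta/\alpha}$ then $g'(\bar u(x);v) = \alpha \bar u(x) v$, so the integrand is $(\bar\varphi(x) + \alpha\bar u(x))v$. By Lemma~\ref{L4.2}, when $|\bar\varphi(x)|>\sqrt{2\alpha\beta}$ we have $\bar u(x) = \proj_{[-\gamma,\gamma]}(-\bar\varphi(x)/\alpha)$; if moreover $|\bar\varphi(x)|\le\alpha\gamma$ this gives $\bar\varphi(x)+\alpha\bar u(x)=0$, so the condition holds for every admissible $v$ and imposes nothing, consistent with items~1--4 not restricting such $x$. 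If $|\bar\varphi(x)|>\alpha\gamma$ then $\bar u(x) = \pm\gamma$ with sign opposite to $\bar\varphi(x)$, so $v(x)$ must have that opposite sign (from $T_\uad$), while $\bar\varphi(x) + \alpha\bar u(x)$ has the sign of $\bar\varphi(x)$; the product is then nonzero unless $v(x)=0$, giving item~4. The boundary case $|\bar\varphi(x)|=\alpha\gamma$ lies on the quadratic branch with $\bar u(x) = -\gamma\,\sign\bar\varphi(x)$ again forcing $\bar\varphi(x)+\alpha\bar u(x)=0$, so no constraint. For $0<|\bar u(x)|<\sqrt{2\beta/\alpha}$, Lemma~\ref{L4.2} forces $|\bar\varphi(x)| = \sqrt{2\alpha\beta}$ with $\sign\bar\varphi(x) = -\sign\bar u(x)$; then $g'(\bar u(x);v) = \sqrt{2\alpha\beta}\sign(\bar u(x))v = -\bar\varphi(x)v$, so the integrand is identically zero and again nothing is imposed, consistent with items~1--4.

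Next I would handle $\bar u(x)=0$, where $g'(0;v) = \sqrt{2\alpha\beta}\,|v|$, so the integrand is $\bar\varphi(x)v + \sqrt{2\alpha\beta}|v|$ and the tangent set is all of $\R$. If $|\bar\varphi(x)|<\sqrt{2\alpha\beta}$ this is $\ge (\sqrt{2\alpha\beta}-|\bar\varphi(x)|)|v| > 0$ unless $v=0$, giving item~1. If $\bar\varphi(x) = \sqrt{2\alpha\beta}$, the integrand is $\sqrt{2\alpha\beta}(v+|v|)$, which is zero exactly when $v\le 0$, giving item~2; symmetrically $\bar\varphi(x) = -\sqrt{2\alpha\beta}$ forces $v\ge0$, item~3. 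If $|\bar\varphi(x)|>\sqrt{2\alpha\beta}$ then Lemma~\ref{L4.2} (the first implication, contrapositive) does not directly apply, but one checks $\bar\varphi(x)v+\sqrt{2\alpha\beta}|v|$ takes negative values for suitable $v$, contradicting stationarity; so in fact this subcase with $\bar u(x)=0$ has positive integrand for $v\ne0$ in one direction — but here I should note that $|\bar\varphi(x)|>\sqrt{2\alpha\beta}$ combined with $\bar u(x)=0$ is incompatible with \eqref{eq42} unless we are in the boundary situation, so this case is vacuous. Assembling the pointwise characterizations over all $x$ yields exactly the four stated conditions, and conversely any $v\in T_\uad(\bar u)$ satisfying items~1--4 makes the integrand vanish a.e.\ hence lies in $C_{\textrm{pc,}\bar u}$.

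The main obstacle is bookkeeping rather than depth: one must be careful that the case distinctions in Lemma~\ref{L4.2} exactly tile the possible pairs $(\bar\varphi(x),\bar u(x))$ arising at a stationary point, in particular correctly treating the measure-zero ambiguity when $|\bar\varphi(x)| = \sqrt{2\alpha\beta}$ and making sure the borderline $|\bar\varphi(x)|=\alpha\gamma$ is absorbed without a separate listed case. A clean way to organize this is to first reduce to the pointwise problem "$\min$ over $v$ in the tangent set of $\bar\varphi(x)v + g'(\bar u(x);v)$ equals $0$, and characterize the argmin," which is a one-dimensional convex problem solvable by inspection of the three branches of $g'$; the four items then read off directly as the cases in which the argmin set is larger than $\{0\}$ or, for items~1 and~4, is exactly $\{0\}$.
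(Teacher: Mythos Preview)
Your proof is correct and takes exactly the approach the paper indicates: reduce to a pointwise condition via nonnegativity of the integrand from \eqref{eq42}, then invoke Lemma~\ref{L4.2} together with the explicit form of $g'$ in \eqref{E4.1} to analyze each case. The paper's own proof is the single sentence ``This is a direct consequence of Lemma~\ref{L4.2} and the form of $g'$, see \eqref{E4.1},'' so you have simply supplied the bookkeeping that the authors left to the reader; the one passage that reads a bit hesitantly (the subcase $\bar u(x)=0$ with $|\bar\varphi(x)|>\sqrt{2\alpha\beta}$) is indeed vacuous by the fourth implication of Lemma~\ref{L4.2}, as you conclude.
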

\begin{proof}
This is a direct consequence of Lemma \ref{L4.2} and the form of $g'$, see \eqref{E4.1}.
%  Suppose $\bar u(x)=0$, which implies $\bar\varphi(x) + g'(\bar u(x);v(x)) = \bar\varphi(x) + \sqrt{2\alpha\beta} |v(x)|$.
%  This expression is zero under conditions (1), (2), and (3) of the statement.
%
%  If $0<|\bar u|<\gamma$ then directional derivative is zero for all $v$.
%  If $\bar u| = \gamma$, then direcitonal derivative is $(\bar\varphi + \alpha\bar u)v$
\end{proof}

\begin{remark}
The conditions on $v(x)$ in case $\bar\varphi(x)= \sqrt{2\alpha\beta}$ also appear in the
critical cone associated to $L^1(\Omega)$-optimal control problems, see \cite[Proposition 3.3]{Casas2012}.
\end{remark}

\begin{remark}
 Let us compare the critical cones $C_{\textrm{pc,}\bar u}$ and $C_{\bar u}$, where the latter was defined in Theorem \ref{T3.6}.
 Clearly it holds  $C_{\bar u} \subset  C_{\textrm{pc,}\bar u}$ for any feasible control $\bar u$.
\end{remark}

\begin{theorem}
 Let $\bar u$ be locally optimal for \Pbpc.
 Then it holds
 \[
  F''(\bar u)v^2 + G''(\bar u; v^2 ) \ge0 \quad\forall v\in
 C_{\textrm{pc,}\bar u}.
 \]
\end{theorem}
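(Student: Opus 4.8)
The plan is to prove this second-order necessary condition for \Pbpc by a standard variational argument, mirroring the proof of Theorem \ref{T3.6} but exploiting the one-sided directional differentiability of $G$ established in the preceding lemmas. First I would fix $v\in C_{\textrm{pc,}\bar u}$ and, to handle the active control constraints, introduce a truncated/regularized sequence of feasible directions $v_k\in T_\uad(\bar u)$ with $v_k\to v$ in $L^2(\Omega)$ — for instance $v_k$ equal to $\proj_{[-k,k]}(v)$ away from a thin neighborhood of the set where $|\bar u|=\gamma$, and zero there — so that $\bar u+tv_k$ is feasible for all small $t>0$. A subtlety not present in Theorem \ref{T3.6} is the $L^p$ (with $p>2$) Taylor remainder in Lemma \ref{L4.7}: the $v_k$ should be taken in $L^\infty(\Omega)$ (bounded, which the truncation gives) so that $\|tv_k\|_{L^p(\Omega_{\bar u})}^2 = o(t^2)$ fails to be automatic — rather, $\|tv_k\|_{L^p}^2 = t^2\|v_k\|_{L^p}^2$, so Lemma \ref{L4.7} gives remainder $o(t^2)$ as $t\to 0$ for each fixed $k$. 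That is the key point that makes the argument close.

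Next I would write, for fixed $k$ and small $t>0$, the expansion of $J_{\textrm{pc}}(\bar u+tv_k) := F(\bar u+tv_k)+G(\bar u+tv_k)$ around $\bar u$. For $F$ use the $C^2$ Taylor expansion: $F(\bar u+tv_k)-F(\bar u) = t\int_\Omega\bar\varphi v_k\dx + \frac{t^2}{2}F''(\bar u)v_k^2 + o(t^2)$, using \eqref{E2.5} and continuity of $F''$. For $G$ use Lemma \ref{L4.7} together with the definition of $G'$ and $G''$: $G(\bar u+tv_k)-G(\bar u) \ge t\,G'(\bar u;v_k) + \frac{t^2}{2}G''(\bar u;v_k^2) + o(t^2)$, noting that $G'(\bar u;\cdot)$ is positively homogeneous so $G'(\bar u;tv_k)=t\,G'(\bar u;v_k)$ and likewise $g''(u;(th)^2)=t^2 g''(u;h^2)$. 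Local optimality gives $0\le J_{\textrm{pc}}(\bar u+tv_k)-J_{\textrm{pc}}(\bar u)$. The first-order terms combine to $t\int_\Omega(\bar\varphi v_k + g'(\bar u;v_k))\dx$, which is $\ge 0$ by the variational inequality \eqref{eq41}, but I need it to vanish on the critical cone — this requires checking that the truncation $v_k$ stays in $C_{\textrm{pc,}\bar u}$ (or that $\int_\Omega(\bar\varphi v_k+g'(\bar u;v_k))\dx\to 0$). Here one uses the pointwise characterization of $C_{\textrm{pc,}\bar u}$: on the set where $v=0$ forced, $v_k=0$; on the sign-constrained sets the truncation preserves the sign; and where $|\bar\varphi+\alpha\bar u|=0$ the integrand is already zero pointwise. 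So $\int_\Omega(\bar\varphi v_k+g'(\bar u;v_k))\dx = 0$ (or $\to 0$), exactly as in Theorem \ref{T3.6}.

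Then, dividing by $t^2$ and letting $t\searrow 0$, the $o(t^2)$ terms drop and I obtain $F''(\bar u)v_k^2 + G''(\bar u;v_k^2)\ge 0$ for every $k$. Finally I pass to the limit $k\to\infty$: $v_k\to v$ in $L^2(\Omega)$, so $F''(\bar u)v_k^2\to F''(\bar u)v^2$ by continuity of the bilinear form $F''(\bar u)$ on $L^2(\Omega)$ (via the state-equation estimates of Section \ref{S2}); and for $G''$ one needs $\liminf_k G''(\bar u;v_k^2)\le$ the limit, or better, that $G''(\bar u;v_k^2)\to G''(\bar u;v^2)$ — the integrand $g''(\bar u(x);\cdot)$ is, on each of the relevant sets, either $\alpha(\cdot)^2$ or $0$, i.e.\ $g''(\bar u(x);h^2)=\alpha\chi_{E}(x)h^2$ for a fixed measurable set $E$ depending only on $\bar u$ (on the measure-zero-ambiguity sets $u=\pm\sqrt{2\beta/\alpha}$ the one-sided dependence on the sign of $h$ can be controlled since the truncation does not change signs), so $G''(\bar u;v_k^2)=\alpha\int_E v_k^2\dx\to\alpha\int_E v^2\dx = G''(\bar u;v^2)$ by $L^2$-convergence. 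This yields the claimed inequality.

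The main obstacle, and the step I would be most careful about, is the interplay between the $L^p$-with-$p>2$ nature of the remainder estimate in Lemma \ref{L4.7} and the need to take the limit in $k$ only in $L^2(\Omega)$: the truncation must simultaneously (i) make $v_k$ bounded so that $v_k\in L^p(\Omega)$ and the $o(t^2)$ remainder is legitimate for each fixed $k$, (ii) keep $v_k$ feasible and in the critical cone so the first-order term vanishes, and (iii) converge to $v$ in $L^2$ so the final limit passes. A secondary point is justifying that $g''(\bar u(\cdot);v_k(\cdot)^2)$ really does converge — this is where the sign conditions in the definition of $g''$ at $u=\pm\sqrt{2\beta/\alpha}$ must be handled, using that on those sets $v$ (and hence $v_k$) already has a definite sign forced by membership in $C_{\textrm{pc,}\bar u}$, or simply bounding $g''$ between $0$ and $\alpha h^2$ and using a dominated-convergence argument.
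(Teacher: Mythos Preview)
Your approach has a direction-of-inequality error that breaks the argument. Lemma~\ref{L4.7} provides only a \emph{lower} bound for the second-order Taylor remainder of $G$,
\[
G(\bar u+tv_k)-G(\bar u)-tG'(\bar u;v_k)-\tfrac{t^2}{2}G''(\bar u;v_k^2)\ \ge\ o(t^2),
\]
which you state correctly. But a second-order \emph{necessary} condition requires the opposite inequality: starting from $0\le (F+G)(\bar u+tv_k)-(F+G)(\bar u)$ you must bound the right-hand side from \emph{above} by the second-order Taylor polynomial (plus $o(t^2)$) in order to conclude, after dividing by $t^2$, that $F''(\bar u)v_k^2+G''(\bar u;v_k^2)\ge0$. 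From $0\le A+B$ and $B\ge C$ one cannot deduce $0\le A+C$. Lemma~\ref{L4.7} is tailored to the \emph{sufficiency} argument (Theorem~\ref{T4.15}), where a lower bound is precisely what is needed; here it is of no use.

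The paper does not appeal to Lemma~\ref{L4.7} at all. Instead it secures the needed upper bound by an additional truncation: besides zeroing $v_k$ near $|\bar u|=\gamma$, it also sets $v_k=0$ on the thin layer $\{\sqrt{2\beta/\alpha}<|\bar u|<\sqrt{2\beta/\alpha}+1/k\}$. With this extra cut, for each fixed $k$ and all $t>0$ small enough the perturbation $\bar u+tv_k$ does not leave the quadratic branch of $g$ from above, so the pointwise second-order expansion of $g$ at $\bar u(x)$ holds as an equality (cf.\ the lemma immediately preceding Lemma~\ref{L4.6}), and one obtains $G(\bar u+tv_k)-G(\bar u)\le tG'(\bar u;v_k)+\tfrac{t^2}{2}G''(\bar u;v_k^2)$. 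Your truncation, which only handles the box constraints at $\pm\gamma$, omits this essential ingredient; consequently neither the $L^p$ remainder estimate nor your subsequent limit argument can rescue the step.
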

\begin{proof}
Let $v\in C_{\textrm{pc,}\bar u}$ be given. For $k\in \mathbb N$ define
\[
 v_k(x):=\begin{cases}
 0 & \text{ if } \gamma - \frac1k < |\bar u(x)| < \gamma,\\
 0 & \text{ if } \sqrt{\frac{2\beta}\alpha} <  |\bar u(x)| < \sqrt{\frac{2\beta}\alpha} + \frac1k,\\
          \proj_{[-k,k]}(v(x)) & \text{ otherwise}.
         \end{cases}
\]
Then $v_k\in C_{\textrm{pc,}\bar u}\cap L^\infty(\Omega)$ is a feasible direction at $\bar u$, which implies
$F(\bar u + tv_k) + G(\bar u + tv_k) - F(\bar u) - G(\bar u)\ge0$ for all $t>0$ small enough.
In addition, Lemma \ref{L4.6} and the construction of $v_k$ implies that
$ %\[
 G(\bar u + tv_k) - G(\bar u) - t G'(\bar u; v_k) \ge \frac{t^2}2 G''(\bar u; v_k^2)
$ %\]
for all $t>0$ small enough. For such {a} small $t$ we have
\[
\begin{aligned}
 0 & \le F(\bar u + tv_k) + G(\bar u + tv_k) - F(\bar u) - G(\bar u) \\
 & \le t F'(\bar u )v_k + \frac{t^2}2 F''(\bar u + \theta_t t v_k)v_k^2 + t G'(\bar u; v_k) + \frac{t^2}2 G''(\bar u; v_k^2)\\
 & =  \frac{t^2}2F''(\bar u + \theta_t t v_k)v_k^2 + \frac{t^2}2 G''(\bar u; v_k^2)
\end{aligned}
\]
with some $\theta_t\in (0,1)$. Dividing by $t^2$ and passing to the limit yields $F''(\bar u)v_k^2 + G''(\bar u; v_k^2 ) \ge0$
for all $k$. Passing to the limit $k\to \infty$ proves the claim.
\end{proof}

For second-order sufficient optimality conditions, we will work with the following extensions of the critical cone
$C_{\textrm{pc,}\bar u}$.
Similarly to \cite{CasasMateos2019}, we define for $\tau>0$
\begin{equation}\label{eqdtau}
 \begin{aligned}
 D^\tau_{\bar u}:=\{ v\in T_\uad(\bar u):
  \ & v(x) \ge 0 \text{ if } \bar u(x)=0 \text{ and } \bar\varphi(x)= -\sqrt{2\alpha\beta},\\
  & v(x) \le 0 \text{ if }  \bar u(x)=0 \text{ and } \bar\varphi(x)= \sqrt{2\alpha\beta},\\
  & v(x) = 0 \text{ if } |\bphi(x)| \le \sqrt{2\alpha\beta} -\tau \text{ or }|\bphi(x)| \ge \alpha\gamma + \tau &\},
 \end{aligned}
\end{equation}
\begin{equation}\label{eqetau}
 E^\tau_{\bar u}:=\{ v\in T_\uad(\bar u): F'(\bar u)v + G'(\bar u;v) \le \tau \|z_v\|_{L^2(\Omega)}  \},
\end{equation}
and
\begin{equation}\label{eqctau}
 C^\tau_{\bar u} := D^\tau_{\bar u} \cap E^\tau_{\bar u}.
\end{equation}
Directions not contained in $D^\tau_{\bar u}$ give rise to positive lower bounds from first-order
derivatives. Precisely, we have the following
\begin{lemma}\label{L4.13}
 Let $\bar u$ satisfy the necessary optimality conditions of \Pbpc and assume that $\tau < \sqrt{2\alpha\beta}$.
 Let $w\in T_{\uad}(\bar u)$.
 Define the set
 \[\begin{aligned}
  \Omega_{\bar u,w}:=\{ x\in \Omega:
  \ & w(x) < 0 \text{ if } \bar u(x)=0 \text{ and } \bar\varphi(x)= -\sqrt{2\alpha\beta},\\
  & w(x)  > 0 \text{ if }  \bar u(x)=0 \text{ and } \bar\varphi(x)= \sqrt{2\alpha\beta},\\
  & w(x) \ne 0 \text{ if } |\bphi(x)| \le \sqrt{2\alpha\beta} -\tau \text{ or }|\bphi(x)| \ge \alpha\gamma + \tau \}.
 \end{aligned}
 \]
Then we have
\[
 F'(\bar u)w + G'(\bar u;w) \ge \tau\|w\|_{L^1(\Omega_{\bar u,w})}.
\]
\end{lemma}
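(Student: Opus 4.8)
The plan is to derive the integral inequality from a pointwise estimate valid on $\Omega_{\bar u,w}$. Since $\bar u$ satisfies the first-order necessary conditions of \Pbpc, the pointwise inequality \eqref{eq42} holds; as $w\in T_\uad(\bar u)$ we have $w(x)\in T_{[-\gamma,\gamma]}(\bar u(x))$ for a.a.\ $x$, so
\[
 \bphi(x)w(x) + g'(\bar u(x);w(x)) \ge 0 \qquad\text{for a.a.\ } x\in\Omega .
\]
Because $F'(\bar u)w=\int_\Omega\bphi w\dx$ and $G'(\bar u;w)=\int_\Omega g'(\bar u;w)\dx$, with both integrands in $L^1(\Omega)$ (here $\bphi\in L^\infty$, $\bar u\in L^\infty$, $w\in L^2\subset L^1$), it suffices to prove the sharper bound $\bphi(x)w(x)+g'(\bar u(x);w(x))\ge\tau|w(x)|$ for a.a.\ $x\in\Omega_{\bar u,w}$: splitting $\int_\Omega=\int_{\Omega_{\bar u,w}}+\int_{\Omega\setminus\Omega_{\bar u,w}}$ and discarding the (nonnegative) second integral then yields $F'(\bar u)w+G'(\bar u;w)\ge\tau\|w\|_{L^1(\Omega_{\bar u,w})}$.

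The pointwise bound is obtained by a case distinction according to which of the three defining conditions of $\Omega_{\bar u,w}$ applies at $x$, using Lemma \ref{L4.2} to locate $\bar u(x)$ and thereby pick the correct branch of $g'(\bar u(x);\cdot)$ from \eqref{E4.1}. First, if $|\bphi(x)|\le\sqrt{2\alpha\beta}-\tau<\sqrt{2\alpha\beta}$, then $\bar u(x)=0$ by Lemma \ref{L4.2}, so $g'(\bar u(x);w(x))=\sqrt{2\alpha\beta}|w(x)|$ and $\bphi(x)w(x)+g'(\bar u(x);w(x))\ge(\sqrt{2\alpha\beta}-|\bphi(x)|)|w(x)|\ge\tau|w(x)|$. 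Second, if $|\bphi(x)|\ge\alpha\gamma+\tau$, then the standing assumption $\sqrt{2\beta/\alpha}<\gamma$ (equivalently $\sqrt{2\alpha\beta}<\alpha\gamma$) together with Lemma \ref{L4.2} gives $\bar u(x)=\proj_{[-\gamma,\gamma]}(-\frac1\alpha\bphi(x))=-\sign(\bphi(x))\gamma$, hence $|\bar u(x)|=\gamma$ and $g'(\bar u(x);w(x))=\alpha\bar u(x)w(x)$; since $|\bar u(x)|=\gamma$, the constraint $w(x)\in T_{[-\gamma,\gamma]}(\bar u(x))$ forces $w(x)$ to have the same sign as $\bphi(x)$, and thus as $\bphi(x)+\alpha\bar u(x)=\sign(\bphi(x))(|\bphi(x)|-\alpha\gamma)$, so that $\bphi(x)w(x)+g'(\bar u(x);w(x))=(|\bphi(x)|-\alpha\gamma)|w(x)|\ge\tau|w(x)|$. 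Finally, in the two boundary cases, $\bar u(x)=0$ with either $\bphi(x)=-\sqrt{2\alpha\beta}$, $w(x)<0$, or $\bphi(x)=+\sqrt{2\alpha\beta}$, $w(x)>0$; in both $g'(\bar u(x);w(x))=\sqrt{2\alpha\beta}|w(x)|=\bphi(x)w(x)$, so the integrand equals $2\sqrt{2\alpha\beta}|w(x)|\ge\tau|w(x)|$ since $\tau<\sqrt{2\alpha\beta}$.

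I do not anticipate a real obstacle: the argument is a finite case check driven by Lemma \ref{L4.2}. The one point requiring a little care is the elementary inequality $\sqrt{2\alpha\beta}<\alpha\gamma$, which is what allows one to conclude $|\bar u(x)|=\gamma$ (not merely $|\bar u(x)|>\sqrt{2\beta/\alpha}$) when $|\bphi(x)|\ge\alpha\gamma+\tau$, so that the linear branch $g'(\bar u(x);\cdot)=\alpha\bar u(x)(\cdot)$ may legitimately be used; beyond that the work is purely the bookkeeping of signs in each branch, for which the sign constraints encoded in $T_\uad(\bar u)$ and in the definition of $\Omega_{\bar u,w}$ turn out to match exactly those needed.
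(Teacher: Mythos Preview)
Your proposal is correct and follows essentially the same approach as the paper's proof: a pointwise case distinction on $\Omega_{\bar u,w}$ using Lemma~\ref{L4.2} to locate $\bar u(x)$ and select the branch of $g'$, combined with the nonnegativity from \eqref{eq42} on the complement. If anything, you are slightly more explicit than the paper in justifying why $|\bar u(x)|=\gamma$ in the case $|\bphi(x)|\ge\alpha\gamma+\tau$ (invoking the standing assumption $\sqrt{2\beta/\alpha}<\gamma$ so that the projection saturates).
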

\begin{proof}
Take $x\in \Omega$ such that $w(x)  < 0$, $\bar u(x)=0$, and $\bar\varphi(x)= -\sqrt{2\alpha\beta}$.
Then it holds $\bphi(x) w(x) + g'(\bar u(x);w(x)) = 2\sqrt{2\alpha\beta}|w|$. A similar argument leads to the same equality when $w(x) > 0$ and $\bar\varphi(x)= \sqrt{2\alpha\beta}$.
Let now $x\in \Omega$ such that $w(x) \ne 0$ and $|\bphi(x)| \le \sqrt{2\alpha\beta} -\tau$, implying $\bar u(x)=0$ by Lemma \ref{L4.2}. Then
we find $\bphi(x) w(x)  + g'(\bar u(x);w(x)) \ge (\tau - \sqrt{2\alpha\beta} + \sqrt{2\alpha\beta})|w| = \tau|w|$.
 Finally, if $|\bar\varphi(x)| > \alpha\gamma + \tau$ and $w(x) \neq 0$ we infer from Lemma \ref{L4.2} and the fact that $w \in T_{\uad}(\bar u)$ that
\[
\bphi(x) w(x)  + g'(\bar u(x);w(x)) = |\bphi(x)| |w(x)| - \alpha\gamma |w(x)| \ge \tau|w(x)|.
\]
Using \eqref{eq42}, we obtain
\[%\begin{aligned}
 F'(\bar u)w + G'(\bar u;w)\ge \int_{\Omega_{\bar u,w}}\bphi(x) w(x)  + g'(\bar u(x);w(x))\dx
\ge \tau\|w\|_{L^1(\Omega_{\bar u,w})},
%\end{aligned}
\]
which is the claim.
\end{proof}

Unfortunately, there are no remainder term estimates of $G$ of the type
\[
G(u+h) - G(u) - G'(u;h) - \frac12G''(u;h^2) \ge o(\|h\|_{L^2(\Omega_u)}^2)
\]
available, cf., Lemma \ref{L4.7} and Remark \ref{R4.8}. To overcome this difficulty, we will replace $G''$ by
\begin{equation}\label{eqgtilde}
 \tilde G(\bar u; v^2) :=
 \alpha
 \int_{\{ x\in \Omega:\ |\bar u(x)|\ge \sqrt{\frac{2\beta}\alpha}, \ \sign(v(x))=\sign(\bar u(x))\}} v^2 \dx.
\end{equation}
in the second-order condition. Clearly, $G''(\bar u;v^2)\ge \tilde G(\bar u; v^2)\ge0$ holds.
In addition, we have
\begin{lemma}\label{L4.14}
Let $u,v,h\in L^2(\Omega)$, then  the inequalities
 \[\begin{gathered}
  G(u+h) - G(u) - G'(u;h) - \frac12 \tilde G(u;h^2) \ge 0, \\
%  \]
% and
%  \[
  G'(v;u-v) \le - G'(u;v-u) - \tilde G(u; (v-u)^2)  \le - G'(u;v-u)
  \end{gathered}
 \]
 are satisfied.
\end{lemma}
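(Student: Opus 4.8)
The plan is to reduce both inequalities to pointwise statements in $x\in\Omega$ and then integrate, using the explicit integrand $\tilde g$ behind $\tilde G$: throughout we are in the case $\sqrt{\frac{2\beta}\alpha}<\gamma$, so $\alpha>0$, and $\tilde g(u;h^2)=\alpha h^2$ when $|u|\ge\sqrt{\frac{2\beta}\alpha}$ and $\sign(h)=\sign(u)$, while $\tilde g(u;h^2)=0$ otherwise. The elementary observation driving everything is that $|u|\ge\sqrt{\frac{2\beta}\alpha}$ together with $\sign(h)=\sign(u)$ forces $|u+h|\ge\sqrt{\frac{2\beta}\alpha}$ with $\sign(u+h)=\sign(u)$; hence, in that situation, both $u$ and $u+h$ lie on the quadratic branch of $g$, on which $g(s)=\frac\alpha2 s^2+\beta$ and $g'(s;w)=\alpha s w$.

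For the first inequality I would argue pointwise. By convexity of $g$, the subgradient inequality $g(u+h)-g(u)-g'(u;h)\ge0$ holds for all $u,h\in\R$, which already establishes the claim at every point where $\tilde g(u;h^2)=0$. At a point with $\tilde g(u;h^2)=\alpha h^2\ne0$ the observation above places $u$ and $u+h$ on the quadratic branch, and a one-line computation gives $g(u+h)-g(u)-g'(u;h)=\frac\alpha2 h^2=\frac12\tilde g(u;h^2)$. Integrating over $\Omega$ yields the first inequality. (Alternatively one can combine Lemma \ref{L4.6} with the pointwise bound $g''(u;h^2)\ge\tilde g(u;h^2)$, but this requires more case distinctions and the direct argument is shorter.)

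For the second inequality, set $h:=v-u$. The right-hand inequality is immediate from $\tilde G(u;(v-u)^2)\ge0$. For the left-hand inequality it suffices to prove the pointwise estimate $g'(u;h)+g'(u+h;-h)\le-\tilde g(u;h^2)$ and integrate, since after the substitution $u+h=v$ this reads exactly $G'(u;v-u)+G'(v;u-v)\le-\tilde G(u;(v-u)^2)$. If $\tilde g(u;h^2)=0$, adding the two subgradient inequalities $g(u+h)\ge g(u)+g'(u;h)$ and $g(u)\ge g(u+h)+g'(u+h;-h)$ yields $g'(u;h)+g'(u+h;-h)\le0=-\tilde g(u;h^2)$. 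If $\tilde g(u;h^2)=\alpha h^2\ne0$, then $u$ and $u+h$ lie on the quadratic branch, so $g'(u;h)=\alpha uh$ and $g'(u+h;-h)=-\alpha(u+h)h$, whence $g'(u;h)+g'(u+h;-h)=-\alpha h^2=-\tilde g(u;h^2)$. Integrating completes the proof of the second inequality.

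The only genuine work is the bookkeeping of the boundary values $|u|\in\{0,\sqrt{\frac{2\beta}\alpha}\}$ and checking that the sign condition in the definition of $\tilde g$ matches the quadratic branch of $g$ precisely; one also records that the integrals defining $G'$ and $\tilde G$ are finite for $L^2$ arguments by the Cauchy--Schwarz inequality. Once this is in place there is no real obstacle: all non-trivial cases collapse to the quadratic computation above, and everything else is convexity of $g$.
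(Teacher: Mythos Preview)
Your proof is correct and follows essentially the same route as the paper: both reduce to pointwise inequalities, use convexity of $g$ where $\tilde g=0$, and use the exact quadratic identity on the set where $|u|\ge\sqrt{\frac{2\beta}\alpha}$ and $\sign(h)=\sign(u)$ (the paper makes the same observation that this forces $|u+h|\ge\sqrt{\frac{2\beta}\alpha}$). The only cosmetic difference is that for the first inequality the paper simply cites Lemma~\ref{L4.6}, whereas you give the short direct argument and mention Lemma~\ref{L4.6} as an alternative.
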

\begin{proof}
 The first claim is a consequence of Lemma \ref{L4.6}.  Let us prove the second claim.
 From the convexity of $g$ we get that
\begin{multline*}
g'(v(x);u(x) - v(x)) + g'(u(x);v(x) - u(x))\\
 \le [g(u(x)) - g(v(x))] + [g(v(x)) - g(u(x))] = 0
\end{multline*}
for a.a.~$x\in \Omega$. In addition, if $|u(x)|\ge \sqrt{\frac{2\beta}\alpha}$ and $\sign(v(x)- u(x))=\sign(u(x))$,
which implies $|v(x)|\ge |u(x)| \ge \sqrt{\frac{2\beta}\alpha}$, then from \eqref{E4.1} we get
\[
g'(v(x);u(x)-v(x)) + g'(u(x);v(x)-u(x))  =- \alpha (v(x)-u(x))^2.
\]
Integrating the above inequalities we infer
\[
 G'(v;u-v) + G'(u;v-u) \le - \tilde G(u; (v - u)^2),
\]
which is the second claim.
\end{proof}

\begin{theorem}\label{T4.15}
 Let $\bar u$ satisfy the necessary optimality conditions of \Pbpc.
 Assume there exists $\delta>0$ and $\tau>0$ such that
 \[
  F''(\bar u)v^2 + \tilde G(\bar u; v^2)  \ge \delta \|z_v\|_{L^2(\Omega)}^2\quad \forall v\in C^\tau_{\bar u}.
 \]
Then there is $\rho>0$ and $\kappa>0$ such that
\[
 F(\bar u) + G(\bar u) + \kappa \|z_{u-\bar u}\|_{L^2(\Omega)}^2\le F(u) + G(u)
\]
for all $u\in B_\rho(u) \cap \uad$.
\end{theorem}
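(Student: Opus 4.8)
The plan is to argue by contradiction following the standard pattern for second-order sufficient conditions in the bang-bang/sparse setting (cf.~\cite{Casas2012}, \cite{CasasMateos2019}). Suppose the conclusion fails. Then for every $k\in\mathbb N$ there is $u_k\in \uad$ with $\|u_k-\bar u\|_{L^2(\Omega)}\to 0$ and
\[
 F(u_k) + G(u_k) < F(\bar u) + G(\bar u) + \tfrac1k \|z_{u_k-\bar u}\|_{L^2(\Omega)}^2.
\]
Set $h_k := u_k-\bar u$, $\rho_k := \|z_{h_k}\|_{L^2(\Omega)}$, and $v_k := h_k/\rho_k$ (if some $\rho_k=0$ one checks directly that $h_k=0$, so we may assume $\rho_k>0$). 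Since $\|v_k\|_{L^2(\Omega)}$ need not be bounded, we instead work with the normalized states: $\|z_{v_k}\|_{L^2(\Omega)}=1$, and after extracting a subsequence we obtain $z_{v_k}\rightharpoonup z$ weakly in $H_0^1(\Omega)$ and strongly in $L^2(\Omega)$, with $z=z_v$ for some $v\in L^2(\Omega)$ (here one uses Theorem \ref{T2.3} together with the compactness of $S'(\bar u)$); moreover $v_k\rightharpoonup v$ in $L^2(\Omega)$ after a further subsequence if $\|v_k\|_{L^2}$ stays bounded, and one shows $v\in T_\uad(\bar u)$ from the sign constraints.

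The next block of steps identifies $v$ as an element of the extended critical cone $C^\tau_{\bar u}$. First, expanding the smooth part, $F(u_k)-F(\bar u) = \rho_k F'(\bar u)v_k + \tfrac12\rho_k^2 F''(\bar u+\theta_k\rho_k v_k)v_k^2$, and estimating $G(u_k)-G(\bar u)\ge \rho_k G'(\bar u;v_k)$ by convexity, the contradiction inequality gives
\[
 \rho_k\big(F'(\bar u)v_k + G'(\bar u;v_k)\big) + \tfrac12\rho_k^2 F''(\bar u+\theta_k\rho_k v_k)v_k^2 < \tfrac1k\rho_k^2.
\]
Since $F'(\bar u)v_k+G'(\bar u;v_k)\ge 0$ by \eqref{eq42} and $F''$ is bounded near $\bar u$ on the relevant directions, dividing by $\rho_k$ shows $F'(\bar u)v_k+G'(\bar u;v_k)\to 0$; in particular $F'(\bar u)v+G'(\bar u;v)\le 0$ by weak lower semicontinuity of the convex term, so $v\in E^\tau_{\bar u}$ (indeed $v\in C_{\textrm{pc,}\bar u}$). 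To place $v$ in $D^\tau_{\bar u}$, I would invoke Lemma \ref{L4.13}: writing $w=v_k$, the quantity $F'(\bar u)v_k+G'(\bar u;v_k)\ge \tau\|v_k\|_{L^1(\Omega_{\bar u,v_k})}$, so $\|v_k\|_{L^1(\Omega_{\bar u,v_k})}\to 0$; passing to the limit (using $v_k\to v$ in measure on the subsequence and Fatou/lower semicontinuity of the $L^1$ norm restricted to the sign-constraint sets) forces $v$ to satisfy the defining conditions of $D^\tau_{\bar u}$. Hence $v\in C^\tau_{\bar u}$.

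The final step derives the contradiction using the hypothesis. The key is to get a lower bound on the second-order remainder that survives the limit. Using Lemma \ref{L4.14} (first inequality), $G(u_k)-G(\bar u)-G'(\bar u;h_k)\ge \tfrac12\tilde G(\bar u;h_k^2)$, so
\[
 0 > F(u_k)+G(u_k)-F(\bar u)-G(\bar u)-\tfrac1k\rho_k^2
 \ge \rho_k\big(F'(\bar u)v_k+G'(\bar u;v_k)\big)+\tfrac{\rho_k^2}2\Big(F''(\bar u+\theta_k\rho_k v_k)v_k^2+\tilde G(\bar u;v_k^2)\Big)-\tfrac1k\rho_k^2.
\]
Dropping the nonnegative first-order term, dividing by $\tfrac12\rho_k^2$, and rearranging yields $F''(\bar u+\theta_k\rho_k v_k)v_k^2+\tilde G(\bar u;v_k^2)<\tfrac2k$. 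Now I would split $F''(\bar u+\theta_k\rho_k v_k)v_k^2$ into $F''(\bar u)v_k^2$ plus a remainder controlled by Theorem \ref{T2.2} (with $u=\bar u+\theta_k\rho_k v_k$, $u-\bar u=\theta_k\rho_k v_k$), which is $o(\|z_{v_k}\|_{L^2}^2)=o(1)$; then pass to the liminf. The main obstacle is exactly here: $\tilde G(\bar u;\cdot)$ and $F''(\bar u)$ are only weakly lower semicontinuous, not continuous, along $v_k\rightharpoonup v$, and $\|v_k\|_{L^2(\Omega)}$ need not converge. One shows $\liminf_k \big(F''(\bar u)v_k^2+\tilde G(\bar u;v_k^2)\big)\ge F''(\bar u)v^2+\tilde G(\bar u;v^2)$ — for $F''$ via \eqref{E2.6} and the strong $L^2$ convergence $z_{v_k}\to z_v$, and for $\tilde G$ via the weak lower semicontinuity of $v\mapsto \int v^2$ on the (limit of the) sign sets, taking care that the defining set of $\tilde G$ depends measurably on $\bar u$ but not on $v$ through an open condition, so a Fatou-type argument applies. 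This gives $0\ge F''(\bar u)v^2+\tilde G(\bar u;v^2)$. If $v\ne 0$, i.e.\ $z_v\ne 0$, then $\|z_v\|_{L^2(\Omega)}>0$ and the assumed coercivity on $C^\tau_{\bar u}$ gives $F''(\bar u)v^2+\tilde G(\bar u;v^2)\ge \delta\|z_v\|_{L^2}^2>0$, a contradiction. If $v=0$ (hence $z_{v_k}\to 0$ strongly, contradicting $\|z_{v_k}\|_{L^2}=1$) one also reaches a contradiction; more carefully, when $z=0$ one re-examines the normalization to see $1=\|z_{v_k}\|_{L^2}\to 0$, which is absurd. In all cases the assumption that the conclusion fails is untenable, and choosing any $\kappa\in(0,\delta/4)$ with the corresponding $\rho$ proves the theorem.
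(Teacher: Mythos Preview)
Your contradiction argument has a genuine gap at the outset: after normalizing $v_k=h_k/\rho_k$ with $\rho_k=\|z_{h_k}\|_{L^2(\Omega)}$, there is no reason for $(v_k)$ to be bounded in any $L^p(\Omega)$. The map $v\mapsto z_v$ is compact, so there is no reverse estimate $\|z_h\|_{L^2}\ge c\|h\|_{L^p}$; for oscillatory $h_k$ one typically has $\|h_k\|_{L^2}/\|z_{h_k}\|_{L^2}\to\infty$. Consequently no weak limit $v$ can be extracted, and the assertion ``$z=z_v$ for some $v\in L^2(\Omega)$'' fails as well (the range of the compact operator $S'(\bar u)$ is not closed). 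Without boundedness of $v_k$ you also lose the $H^1_0$-bound on $z_{v_k}$, and hence the claimed strong $L^2$-convergence $z_{v_k}\to z$. Everything downstream --- placing $v$ in $C^\tau_{\bar u}$, passing to the liminf in $F''(\bar u)v_k^2+\tilde G(\bar u;v_k^2)$, and the dichotomy $v=0$ versus $v\ne0$ --- rests on this unavailable limit. Your parenthetical ``if $\|v_k\|_{L^2}$ stays bounded'' flags the issue without resolving it. This obstruction is exactly why the coercivity hypothesis is formulated with $\|z_v\|_{L^2}^2$ rather than $\|v\|_{L^2}^2$: the standard contradiction scheme from the $L^2$-coercive setting does not transfer.

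The paper's proof is direct, following \cite{CasasMateos2019}. For fixed $u\in B_\rho(\bar u)\cap\uad$ one distinguishes three cases. If $u-\bar u\notin E^\tau_{\bar u}$, the first-order expansion together with Theorem~\ref{T2.1} already gives growth $\ge\tfrac\tau2\|z_{u-\bar u}\|_{L^2}$. If $u-\bar u\in C^\tau_{\bar u}$, Lemma~\ref{L4.14} plus the hypothesis and Theorem~\ref{T2.2} give $\ge\tfrac\delta4\|z_{u-\bar u}\|_{L^2}^2$. In the remaining case $u-\bar u\in E^\tau_{\bar u}\setminus D^\tau_{\bar u}$ one \emph{splits} $u-\bar u=v+w$ with $w=\chi_{\Omega_{\bar u,u-\bar u}}(u-\bar u)$ supported on the violation set of Lemma~\ref{L4.13} and $v\in D^\tau_{\bar u}$; Lemma~\ref{L4.13} yields first-order growth $\ge c_\tau\|z_w\|_{L^2}$ from the $w$-part, one checks (after possibly shrinking $\tau$ to some $\tau'$) that $v\in E^{\tau}_{\bar u}$ so the second-order hypothesis applies to $v$, and the cross terms $F''(\bar u)(v,w)$ are absorbed. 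This finite, pointwise splitting of the increment is the device that replaces your passage to a limit direction and is the essential idea your proposal is missing.
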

\begin{proof}
Without loss of generality we can assume that $\tau < \sqrt{2\alpha\beta}$. We follow the proof of \cite[Theorem 3.1]{CasasMateos2019}.
 The positive number $\rho$ will be determined in the course of the proof.
 Take $u\in B_\rho(\bar u) \cap \uad$.
 Let us distinguish the following cases.

\paragraph{Case 1: $u-\bar u \not \in E^\tau_{\bar u}$}
Then we can expand and use the property of $E^\tau_{\bar u}$ to estimate
with some $\theta\in(0,1)$
 \begin{multline*}
  F(u) + G(u) - (F(\bar u) + G(\bar u))\\
  \ge F'(\bar u)(u-\bar u) + G'(\bar u;u-\bar u) +
  (F'(\bar u + \theta(u-\bar u)) -F'(\bar u))(u-\bar u)\\
  > \tau \|z_{u-\bar u}\|_{L^2(\Omega)} -\rho \|F'(\bar u + \theta(u-\bar u)) -F'(\bar u)\|_{L^2(\Omega)}.
 \end{multline*}
According to  Theorem \ref{T2.1}, there is $\rho'>0$  and $C>0$ such that
\[
\|F'(\bar u + \theta(u-\bar u)) -F'(\bar u)\|_{L^2(\Omega)} \le C \|z_{u-\bar u}\|_{L^2(\Omega)}
\]
if $u \in B_{\rho'}(\bar u)$.
Let $\rho_1: = \min(\rho', \ \frac{\tau}{2C})$.
Then for $\rho\in (0,\rho_1)$, we obtain
\[
  \rho \|F'(\bar u + \theta(u-\bar u)) -F'(\bar u)\|_{L^2(\Omega)} \le  \frac\tau2 \|z_{u-\bar u}\|_{L^2(\Omega)},
\]
which proves
\[
   F(u) + G(u) - (F(\bar u) + G(\bar u)) \ge \frac\tau2 \|z_{u-\bar u}\|_{L^2(\Omega)} .
\]
By Theorem \ref{T2.3}, there is $c>0$ such that
\begin{equation}\label{eq403}
 \|z_{u-\bar u}\|_{L^2(\Omega)}^2 \le c \rho  \|z_{u-\bar u}\|_{L^2(\Omega)},
 \end{equation}
which finishes the proof of this case.

\paragraph{Case 2: $u-\bar u  \in C^\tau_{\bar u}$}
Using Lemma \ref{L4.14}, we can expand with $\theta\in (0,1)$
 \begin{multline*}
   F(u) + G(u) - (F(\bar u) + G(\bar u))\\
  \ge \frac12 F''(\bar u)(u-\bar u)^2 + \frac12 \tilde G(\bar u; (u-\bar u)^2)
  +\frac12(F''(\bar u + \theta(u-\bar u)) -F''(\bar u))(u-\bar u)^2\\
  \ge \frac\delta2 \|z_{u-\bar u}\|_{L^2(\Omega)}^2
  +\frac12(F''(\bar u + \theta(u-\bar u)) -F''(\bar u))(u-\bar u)^2
\end{multline*}
By Theorem \ref{T2.2}, there is $\rho_2>0$ such that
\[
 |F''(\bar u + \theta(u-\bar u)) -F''(\bar u))(u-\bar u)^2| \le \frac\delta2 \|z_{u-\bar u}\|_{L^2(\Omega)}^2
\]
holds for all $u\in B_{\rho_2}(\bar u)\cap \uad$.
This implies
\[
  F(u) + G(u) - (F(\bar u) + G(\bar u)) \ge  \frac\delta4 \|z_{u-\bar u}\|_{L^2(\Omega)}^2.
\]

\paragraph{Case 3: $u-\bar u\in E^\tau_{\bar u} \setminus D^\tau_{\bar u}$}
Let  $\Omega_{\bar u,u-\bar u}$ be as in Lemma \ref{L4.13}, i.e., it is the set of points, where $u-\bar u$ violates the
pointwise conditions in the definition of $D^\tau_{\bar u}$. Then
we split $u-\bar u= v + w$ with $v:= (1-\chi_{\Omega_{\bar u,u-\bar u}}) (u-\bar u)\in D^\tau_{\bar u}$ and
$w:=  \chi_{\Omega_{\bar u,u-\bar u}} (u-\bar u)$.
Then Lemma \ref{L4.13} implies
\begin{equation}\label{eq404}
 F'(\bar u)w + G'(\bar u;w) \ge \tau\|w\|_{L^1(\Omega)}.
\end{equation}
In the next step, we show that there exists $\tau' \in (0,\tau]$ such that
if $u-\bar u \in E^{\tau'}_{\bar u}$, then  $v\in E^{\tau'}_{\bar u} \subset E^\tau_{\bar u}$.
Using Theorem \ref{T2.3}, we deduce with \eqref{eq404}
\[
  F'(\bar u)w + G'(\bar u;w) \ge c_\tau \|z_w\|_{L^2(\Omega)}
\]
for $c_\tau>0$. Take $\tau' :=\min(\tau,c_\tau)$.
Suppose $u-\bar u \in E^{\tau'}_{\bar u}$.
Since $v$ and $w$ have disjoint support, it holds
\[
 G'(\bar u; u-\bar u) = G'(\bar u; v+w) = G'(\bar u;v) + G'(\bar u;w).
\]
Then we obtain
\[\begin{aligned}
 F'(\bar u)v + G'(\bar u;v) & = F'(\bar u;u-\bar u) + G'(\bar u; u-\bar u)
 - F'(\bar u)w - G'(\bar u; w) \\
 & \le \tau' \|z_{u-\bar u}\|_{L^2(\Omega)} - c_\tau  \|z_w\|_{L^2(\Omega)}\\
 & \le \tau' \|z_v\|_{L^2(\Omega)} + (\tau'- c_\tau) \|z_{w}\|_{L^2(\Omega)} \le \tau' \|z_v\|_{L^2(\Omega)},
\end{aligned}
\]
and $v\in E^{\tau'}_{\bar u}$ follows.
We now study the two cases $u-\bar u \in E^{\tau'}_{\bar u}$ and $u-\bar u \not\in E^{\tau'}_{\bar u}$.
\paragraph{Case 3a: $u-\bar u\in E^\tau_{\bar u} \setminus D^\tau_{\bar u}$ and $u-\bar u \in E^{\tau'}_{\bar u}$}
As argued above, this implies $v\in E^{\tau'}_{\bar u}\subset E^\tau_{\bar u}$
and $v\in D^\tau_{\bar u} \cap E^\tau_{\bar u}$. Hence the second-order condition applies to $v$.

Using Lemma \ref{L4.14}, \eqref{eq41}, and \eqref{eq404} above, we find
\begin{multline*}
F(u) + G(u) - (F(\bar u) + G(\bar u)) \\
\begin{aligned}
  &\ge F'(\bar u)(v+w) + \frac12 F''(\bar u)(v+w)^2
  + \frac12 (F''(u_\theta)-F''(\bar u))(v+w)^2\\
 &\qquad + G'(\bar u;v+w)
  + \frac12 \tilde G(\bar u; v^2) \\
 & \ge c_\tau \|z_w\|_{L^2(\Omega)} + \frac\delta2 \|z_v\|_{L^2(\Omega)}^2 \\
 &\qquad + F''(\bar u)(v,w) +\frac12 F''(\bar u)w^2+  \frac12 (F''(u_\theta)-F''(\bar u))(v+w)^2.
\end{aligned}
\end{multline*}
Due to \eqref{E2.6}, there is $M>0$ such that
\[
 |F''(\bar u)(v,w)| \le M \|z_v\|_{L^2(\Omega)} \|z_w\|_{L^2(\Omega)}
 \le \frac\delta8 \|z_v\|_{L^2(\Omega)}^2 + \frac{2M^2}\delta \|z_w\|_{L^2(\Omega)}^2
\]
By Theorem \ref{T2.2}, there is $\rho_{3\textrm a}>0$ such that
\[
 |(F''(u_\theta)-F''(\bar u))(v+w)^2| \le \frac\delta4 (\|z_v\|_{L^2(\Omega)}^2+ \|z_w\|_{L^2(\Omega)}^2)
\]
for all $u\in B_{\rho_{3\textrm a}}(\bar u)\cap \uad$.
Collecting these estimates yields with some $K>0$
\[
F(u) + G(u) - (F(\bar u) + G(\bar u))
\ge (c_\tau - K\|z_w\|_{L^2(\Omega)} )  \|z_w\|_{L^2(\Omega)} + \frac{\delta}{8} \|z_v\|_{L^2(\Omega)}^2.
\]
Decreasing $\rho_{3\textrm a}$ if necessary, we can achieve
 $c_\tau - K\|z_w\|_{L^2(\Omega)}\ge \frac{\delta}8\|z_w\|_{L^2(\Omega)}$.
Using \eqref{eq403} and $\|z_w\|_{L^2(\Omega)}^2 + \|z_v\|_{L^2(\Omega)}^2 \ge \frac12\|z_{u-\bar u}\|_{L^2(\Omega)}^2$
concludes this case.

\paragraph{Case 3b: $u-\bar u\in E^\tau_{\bar u} \setminus D^\tau_{\bar u}$ and $u-\bar u \not\in E^{\tau'}_{\bar u}$}
This case was already studied (with different parameter) in Case 1, proving optimality in a ball $B_{\rho_{3\textrm b}}(\bar u)$.

Taking the $\rho:=\min(\rho_1,\rho_2,\rho_{3\textrm a},\rho_{3\textrm b})$ proves the claim.
\end{proof}

\begin{corollary}
 There is $\beta^*$ such that for all $\beta>\beta^*$ the control $\bar u=0$ is locally optimal.
\end{corollary}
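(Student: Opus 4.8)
We stay within the standing assumption $\sqrt{2\beta/\alpha}<\gamma$ of this subsection, so in particular $\alpha>0$. The plan is to apply Theorem \ref{T4.15} and Corollary \ref{C4.3} to the candidate $\bar u=0$. Let $\bar y=y_0=S(0)$ and $\bar\varphi=\varphi_0$ be the state and adjoint state associated with $\bar u=0$. The decisive observation is that $\bar\varphi$ does not depend on $\beta$ and, by \eqref{E2.3}, is bounded; set $\Phi_0:=\|\bar\varphi\|_{L^\infty(\Omega)}<\infty$. I claim that $\beta^*:=\Phi_0^2/(2\alpha)$ works, so fix $\beta>\beta^*$, which gives $\sqrt{2\alpha\beta}>\Phi_0$.

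First I would verify that $\bar u=0$ satisfies the first-order necessary conditions of \Pbpc. Since $T_{[-\gamma,\gamma]}(0)=\R$ and $g'(0;v)=\sqrt{2\alpha\beta}\,|v|$, the pointwise inequality \eqref{eq42} at $\bar u=0$ reads $\bar\varphi(x)v+\sqrt{2\alpha\beta}\,|v|\ge0$ for all $v\in\R$, which is equivalent to $|\bar\varphi(x)|\le\sqrt{2\alpha\beta}$; this holds almost everywhere because $|\bar\varphi(x)|\le\Phi_0<\sqrt{2\alpha\beta}$.

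Next I would show that the critical cone $C^\tau_{\bar u}$ from \eqref{eqctau} collapses to $\{0\}$ for a suitable $\tau$. Put $\tau:=\tfrac12(\sqrt{2\alpha\beta}-\Phi_0)>0$, so that $\sqrt{2\alpha\beta}-\tau=\tfrac12(\sqrt{2\alpha\beta}+\Phi_0)>\Phi_0\ge|\bar\varphi(x)|$ for almost all $x\in\Omega$. Then the last line in the definition \eqref{eqdtau} forces $v(x)=0$ a.e.\ for every $v\in D^\tau_{\bar u}$, hence $D^\tau_{\bar u}=\{0\}$ and $C^\tau_{\bar u}=D^\tau_{\bar u}\cap E^\tau_{\bar u}=\{0\}$. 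Consequently the hypothesis of Theorem \ref{T4.15} is satisfied trivially: for $v=0$ one has $z_v=0$ and $F''(\bar u)v^2+\tilde G(\bar u;v^2)=0\ge\delta\,\|z_v\|_{L^2(\Omega)}^2$ with, say, $\delta=1$. Theorem \ref{T4.15} then yields $\rho,\kappa>0$ with $F(\bar u)+G(\bar u)+\kappa\,\|z_{u-\bar u}\|_{L^2(\Omega)}^2\le F(u)+G(u)$ for all $u\in B_\rho(\bar u)\cap\uad$, so $\bar u=0$ is locally optimal for \Pbpc. Finally, since $|\bar\varphi(x)|\le\Phi_0<\sqrt{2\alpha\beta}$ a.e., the set $\{x\in\Omega:\ |\bar\varphi(x)|=\sqrt{2\alpha\beta}\}$ is empty, so Corollary \ref{C4.3} upgrades this to local optimality of $\bar u=0$ for \Pb as well.

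There is no genuine obstacle here once Theorem \ref{T4.15} and Corollary \ref{C4.3} are available: the argument rests entirely on the elementary fact that for large $\beta$ the $\beta$-independent adjoint state $\bar\varphi$ is uniformly and strictly dominated by $\sqrt{2\alpha\beta}$, which simultaneously makes $\bar u=0$ stationary and reduces the relevant critical cone to $\{0\}$. The only mild point to watch is that the threshold parameter $\tau$ entering \eqref{eqdtau}--\eqref{eqctau} is chosen $\beta$-dependent (here $\tau\sim\sqrt{2\alpha\beta}-\Phi_0$), which is harmless; and that the complementary regime $\sqrt{2\beta/\alpha}\ge\gamma$, excluded by the standing assumption, is covered by the analogous $L^1$-type theory, cf.\ \cite[Theorem 3.6]{Casas2012}.
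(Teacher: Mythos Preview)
Your proof is correct and follows essentially the same route as the paper: since the adjoint state $\varphi_0$ at $\bar u=0$ is $\beta$-independent and bounded, for large $\beta$ one gets $|\varphi_0|<\sqrt{2\alpha\beta}-\tau$ almost everywhere, which forces $D^\tau_{\bar u}=\{0\}$ so that Theorem~\ref{T4.15} applies trivially. The paper's own proof concludes only local optimality for \Pbpc; your explicit verification of stationarity and your additional application of Corollary~\ref{C4.3} to pass to \Pb are both valid and yield a slightly stronger conclusion.
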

\begin{proof}
Let us denote by $\varphi_0$ the adjoint state associated to $\bar u:=0$.
Take $\tau>0$, and set $\beta^*$ such that $\|\varphi_0\|_{L^\infty(\Omega)} \le \sqrt{2\alpha\beta^*} - \tau$.
Then for $\beta>\beta^*$, it holds $D^\tau_{\bar u}=\{0\}$, and the second-order condition is trivially fulfilled.
Therefore, $\bar u=0$ is a local solution of \Pbpc.
\end{proof}

\begin{remark}
 In the proof of Theorem \ref{T4.15}, we only used the following condition:
 \[
    F''(\bar u)(u-\bar u)^2 + \int_{\{x: |\bar u|\ge\sqrt{\frac{2\beta}\alpha}, |u|\ge \sqrt{\frac{2\beta}\alpha}\}}
    (u-\bar u)^2 \dx  \ge \delta \|z_{u-\bar u}\|_{L^2(\Omega)}^2\quad \forall u-\bar u\in C^\tau_{\bar u}
 \]
Of course, the expression on the left-hand side is not a bilinear form.
\end{remark}

\begin{remark}
Let us remark that a condition of the type
 \[
  F''(\bar u)v^2 + \tilde G(\bar u; v^2)  \ge \delta \|v\|_{L^2(\Omega)}^2
 \]
for test functions $v$ in some cone cannot be expected to holds, as
$v\mapsto\tilde G(\bar u; v^2)$ is not coercive on $L^2(\Omega)$.
Hence, we have to resort to the weaker condition, which is also used in \cite{Casas2012} for bang-bang control
problems.
\end{remark}

\begin{theorem}\label{T4.18}
 Let $\bar u$ satisfy the necessary optimality conditions of \Pbpc.
 Assume there exists $\delta>0$ and $\tau>0$ such that
 \[
  F''(\bar u)v^2 + \tilde G(\bar u; v^2)  \ge \delta \|z_v\|_{L^2(\Omega)}^2\quad \forall v\in C^\tau_{\bar u}.
 \]
Then $\bar u$ is an isolated stationary point of  \Pbpc.
\end{theorem}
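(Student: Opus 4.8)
The plan is to argue by contradiction: suppose $\bar u$ is \emph{not} an isolated stationary point of \Pbpc, so there exists a sequence $(u_k)$ of stationary points of \Pbpc with $u_k \ne \bar u$ and $u_k \to \bar u$ in $L^2(\Omega)$. I would first normalize the differences by setting $t_k := \|u_k - \bar u\|_{L^2(\Omega)} \to 0$ and $v_k := (u_k - \bar u)/t_k$, so $\|v_k\|_{L^2(\Omega)} = 1$. Passing to a subsequence, $v_k \rightharpoonup v$ weakly in $L^2(\Omega)$ for some $v$, and since $v_k \in T_\uad(\bar u)$ — a convex, closed, hence weakly closed cone — we get $v \in T_\uad(\bar u)$. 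By the compactness of the linear solution operator $v \mapsto z_v$ from $L^2(\Omega)$ into $L^2(\Omega)$ (which follows from the differentiability and regularity statements of Section \ref{S2}, cf.\ \eqref{E2.1}), we have $z_{v_k} \to z_v$ strongly in $L^2(\Omega)$.

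\textbf{Key steps.} The core of the argument is to show, on the one hand, that $v$ lies in the critical-type cone $C^\tau_{\bar u}$ for the relevant $\tau$, so that the coercivity hypothesis applies to $v$; and on the other hand, that the coercivity forces $z_v \ne 0$ while a separate argument forces $z_v = 0$, giving the contradiction. To get $v \in D^\tau_{\bar u}$: since $u_k$ is stationary for \Pbpc, the variational inequality \eqref{eq41} holds at each $u_k$, and subtracting the one at $\bar u$ from the one at $u_k$ (using $\pm(u_k - \bar u)$ as admissible directions where the sign constraints permit, together with the pointwise characterization of Lemma \ref{L4.2}) yields that $v_k$, and hence $v$ in the limit, must vanish on the sets where $|\bphi| \le \sqrt{2\alpha\beta} - \tau$ or $|\bphi| \ge \alpha\gamma + \tau$ and must respect the sign conditions where $\bar u(x) = 0$, $\bphi(x) = \mp\sqrt{2\alpha\beta}$; this is exactly membership in $D^\tau_{\bar u}$ — I would argue this directly from Lemma \ref{L4.2} applied at $u_k$ for $k$ large, since $u_k \to \bar u$ and the relevant strict inequalities on $\bphi$ are stable. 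To get $v \in E^\tau_{\bar u}$: expand $F + G$ at $\bar u$ and at $u_k$ using stationarity of $u_k$ together with the Taylor lower bound of Lemma \ref{L4.14}; comparing $F(\bar u)+G(\bar u)$ with $F(u_k)+G(u_k)$ via the first-order conditions at both points and dividing by $t_k$, one finds $F'(\bar u)v_k + G'(\bar u; v_k) \le o(1)\|z_{v_k}\|_{L^2(\Omega)}$ up to a remainder controlled by Theorems \ref{T2.1}–\ref{T2.2}, which passes to the limit to give $F'(\bar u)v + G'(\bar u;v) \le 0 \le \tau\|z_v\|_{L^2(\Omega)}$, i.e.\ $v \in E^\tau_{\bar u}$. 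Hence $v \in C^\tau_{\bar u}$.

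\textbf{Closing the contradiction.} With $v \in C^\tau_{\bar u}$, the hypothesis gives $F''(\bar u)v^2 + \tilde G(\bar u; v^2) \ge \delta\|z_v\|_{L^2(\Omega)}^2$. Separately, using stationarity of both $u_k$ and $\bar u$ one can derive a two-sided estimate: adding the first-order optimality inequality at $u_k$ (tested with $\bar u - u_k$) to the one at $\bar u$ (tested with $u_k - \bar u$), and invoking the second inequality of Lemma \ref{L4.14} to handle the $G$-part, produces $F'(u_k)(\bar u - u_k) + F'(\bar u)(u_k - \bar u) \ge \tilde G(\bar u;(u_k-\bar u)^2) \ge 0$, i.e.\ $-(F'(u_k) - F'(\bar u))(u_k - \bar u) \ge \tilde G(\bar u;(u_k - \bar u)^2)$. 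Dividing by $t_k^2$, using the mean value theorem on $F'$ and Theorem \ref{T2.2} to pass to the limit, together with $z_{v_k} \to z_v$, yields $-F''(\bar u)v^2 \ge \limsup_k t_k^{-2}\tilde G(\bar u;(u_k-\bar u)^2) \ge \tilde G(\bar u; v^2)$ (the last by weak lower semicontinuity of the relevant quadratic functional, restricted to a fixed measurable set depending only on $\bar u$), so $F''(\bar u)v^2 + \tilde G(\bar u;v^2) \le 0$. Combined with the coercivity inequality this forces $z_v = 0$. Finally I would show $z_v = 0$ leads to $\|z_{v_k}\|_{L^2(\Omega)} \to 0$, and then — arguing as in Case 1 of the proof of Theorem \ref{T4.15}, or by showing $v = 0$ follows from the structure of the constraints and $\alpha > 0$ making $\|v_k\|_{L^2}$ controllable — contradict $\|v_k\|_{L^2(\Omega)} = 1$. \textbf{The main obstacle} I expect is precisely this last step: $z_v = 0$ does not immediately give $v = 0$ (the map $v\mapsto z_v$ is not injective in general), so one must exploit the stationarity equations more carefully — essentially re-running the quantitative growth estimate of Theorem \ref{T4.15} to show that $\|z_{u_k - \bar u}\|_{L^2(\Omega)}^2$ cannot be $o(\|u_k - \bar u\|_{L^2(\Omega)}^2)$ unless $u_k = \bar u$, which contradicts $u_k \ne \bar u$; this may require a separate bound relating $\|u_k - \bar u\|_{L^2(\Omega)}$ to $\|z_{u_k-\bar u}\|_{L^2(\Omega)}$ valid for stationary points, using the projection-type formula for $\bar u$ on the set $\{|\bphi| > \sqrt{2\alpha\beta}\}$ from Lemma \ref{L4.2}.
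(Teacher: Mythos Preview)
Your approach via a contradiction sequence and weak limits is genuinely different from the paper's direct argument, but as you yourself anticipate, the final step does not close. Once you obtain $z_v=0$ (hence $v=0$, since the linearized equation \eqref{E2.1} forces the right-hand side to vanish when $z=0$), you have only $v_k\rightharpoonup 0$ weakly in $L^2(\Omega)$, which is perfectly consistent with $\|v_k\|_{L^2(\Omega)}=1$; no contradiction follows. Your suggested fixes do not resolve this: Theorem~\ref{T4.15} yields growth only in $\|z_{u-\bar u}\|_{L^2(\Omega)}^2$, which can be $o(\|u-\bar u\|_{L^2(\Omega)}^2)$, and the projection formula from Lemma~\ref{L4.2} controls $u_k-\bar u$ only on the set $\{|\bphi|>\sqrt{2\alpha\beta}\}$, not globally.

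The paper avoids this entirely by working directly with an arbitrary $u\in B_\rho(\bar u)\cap\uad$, $u\ne\bar u$, and showing via a case distinction (mirroring the proof of Theorem~\ref{T4.15}) that $F'(u)(\bar u-u)+G'(u;\bar u-u)<0$, so $u$ cannot be stationary. In fact your own intermediate steps already contain the key to a correct proof: your argument via Lemma~\ref{L4.2} applied at $u_k$ (using $\|\varphi_{u_k}-\bphi\|_{L^\infty(\Omega)}\to0$) shows $u_k-\bar u\in D^\tau_{\bar u}$ for large $k$, and your $E^\tau$ estimate actually gives $F'(\bar u)(u_k-\bar u)+G'(\bar u;u_k-\bar u)\le C\|u_k-\bar u\|_{L^2(\Omega)}\|z_{u_k-\bar u}\|_{L^2(\Omega)}$, hence $u_k-\bar u\in E^\tau_{\bar u}$ for large $k$. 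Thus $u_k-\bar u\in C^\tau_{\bar u}$ itself, and you can apply the second-order hypothesis directly to $u_k-\bar u$ (not to the weak limit $v$). Combining this with your monotonicity inequality $-(F'(u_k)-F'(\bar u))(u_k-\bar u)\ge\tilde G(\bar u;(u_k-\bar u)^2)$ and Theorem~\ref{T2.2} yields $\delta\|z_{u_k-\bar u}\|_{L^2(\Omega)}^2\le o(\|z_{u_k-\bar u}\|_{L^2(\Omega)}^2)$, forcing $z_{u_k-\bar u}=0$ and hence $u_k=\bar u$ --- the desired contradiction. This is exactly the paper's Case~2; the remaining cases in the paper handle non-stationary $u$ and are not needed in the sequence formulation. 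In short: drop the normalization and the passage to the weak limit, and argue at the level of $u_k-\bar u$.
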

\begin{proof}
We follow the proof of Theorem \ref{T4.15} above.
We will show that there is $\rho>0$ such that
$B_\rho(\bar u) \cap \uad$ does not contain a stationary point of \Pbpc different from $\bar u$.
The positive number $\rho$ will be determined in the course of the proof. Take $u\in B_\rho(\bar u) \cap \uad$, $u\ne\bar u$.
We will show that if $\rho$ is small enough then the inequality
$ %\[
 F'(u)(\bar u-u) +  G'(u;\bar u-u) <0
$ %\]
holds, and $u$ cannot be a stationary point.
Let us {note} that $u\ne\bar u$ implies $z_{u-\bar u}\ne0$.
Again, we will distinguish the following cases.

\paragraph{Case 1: $u-\bar u \not \in E^\tau_{\bar u}$}

From the convexity of $G$ we get that
\[
G'(u;\bar u-u) + G'(\bar u;u-\bar u) \le [G(\bar u) - G(u)] + [G(u) - G(\bar u)] = 0.
\]
We use this inequality, the property of $E^\tau_{\bar u}$,  and Theorem \ref{T2.1} to estimate
\begin{multline*}
F'(u)(\bar u-u) +  G'(u;\bar u-u) \le -( F'(\bar u)(u-\bar u) + G'(\bar u;u-\bar u) ) +  (F'(\bar u)-F'(u))(u-\bar u) \\
\le (C \|u-\bar u\|_{L^2(\Omega)} - \tau )\|z_{u-\bar u}\|_{L^2(\Omega)} .
\end{multline*}
Clearly, this expression is negative if $\rho<\rho_1:= \frac{\tau}{2C}$.

\paragraph{Case 2: $u-\bar u  \in C^\tau_{\bar u}$}
By Lemma \ref{L4.14}, we can expand with $u_\theta := \bar u + \theta (u-\bar u)$, $\theta\in (0,1)$,
\begin{multline*}
F'(u)(\bar u-u) +  G'(u;\bar u-u)\\
\begin{aligned}
& \le -( F'(\bar u)(u-\bar u) + G'(\bar u;u-\bar u) ) - \tilde G(\bar u;(u-\bar u)^2)
+  (F'(\bar u)-F'(u))(u-\bar u) \\
&\le -( F''(\bar u)(u-\bar u)^2 +  \tilde G(\bar u;(u-\bar u)^2)) + (F''(\bar u) - F''(u_\theta))(u-\bar u)^2\\
&\le -\delta \|z_{u-\bar u}\|_{L^2(\Omega)}^2  + (F''(\bar u) - F''(u_\theta))(u-\bar u)^2.
\end{aligned}
\end{multline*}
By Theorem \ref{T2.2}, there is $\rho_2>0$ such that $F'(u)(\bar u-u) +  G'(u;\bar u-u) <0$ holds if $\rho<\rho_2$.

Let us split $u-\bar u = v+w$ as in the proof of Theorem \ref{T4.15}.
 Let  $\tau'$ be as in that proof.
Then it remains to consider the following two cases.

\paragraph{Case 3a: $u-\bar u\in E^\tau_{\bar u} \setminus D^\tau_{\bar u}$ and $u-\bar u \in E^{\tau'}_{\bar u}$}

We obtain using Lemmas \ref{L4.13} and \ref{L4.14} with $u_\theta := \bar u + \theta (u-\bar u)$, $\theta\in(0,1)$,
\begin{multline*}
F'(u)(\bar u-u) +  G'(u;\bar u-u)\\
\begin{aligned}
 & \le -( F'(\bar u)(u-\bar u) + G'(\bar u;u-\bar u) ) - \tilde G(\bar u;(u-\bar u)^2)
+  (F'(\bar u)-F'(u))(u-\bar u) \\
&\le -c_\tau \|z_w\|_{L^2(\Omega)}  -( F''(\bar u)(u-\bar u)^2 +  \tilde G(\bar u;(u-\bar u)^2)) + (F''(\bar u) - F''(u_\theta))(u-\bar u)^2.
\end{aligned}
\end{multline*}
Arguing as in the proof  of Theorem \ref{T4.15}, we find
\[
F'(u)(\bar u-u) +  G'(u;\bar u-u) \le
- (c_\tau - K\|z_w\|_{L^2(\Omega)} )  \|z_w\|_{L^2(\Omega)} - \frac{\delta}{8} \|z_v\|_{L^2(\Omega)}^2.
\]
where the right-hand side is negative for $\rho < \rho_{3\textrm a}$,
since $z_{u-\bar u} = z_v+z_w \ne0$.

\paragraph{Case 3b: $u-\bar u\in E^\tau_{\bar u} \setminus D^\tau_{\bar u}$ and $u-\bar u \not\in E^{\tau'}_{\bar u}$}
This case is Case 1 with different parameters, proving the claim in a ball $B_{\rho_{3\textrm b}}(\bar u)$.

Taking the $\rho:=\min(\rho_1,\rho_2,\rho_{3\textrm a},\rho_{3\textrm b})$ proves the claim.
\end{proof}

\subsection{Second-order sufficient optimality condition for the original problem}

We will use the sufficient conditions for \Pbpc to obtain sufficient optimality conditions for \Pb. First, let us observe that stationary points of \Pb are stationary points of \Pbpc as well.

\begin{lemma}[{\cite[Lemma 3.25]{Wachsmuth2019}}]
\label{L4.19}
 Let $\bar u$ satisfy the PMP for \Pb. Then it is a stationary point of \Pbpc.
\end{lemma}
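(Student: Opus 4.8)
The plan is to show that the Pontryagin maximum principle \eqref{E3.1} for \Pb, satisfied at $\bar u$, implies the pointwise variational inequality \eqref{eq42} characterizing stationary points of \Pbpc. Since \eqref{eq42} is equivalent to \eqref{eq41} by standard arguments, this suffices. The adjoint state $\bar\varphi$ is the same for both problems, because \eqref{E2.2} does not involve the cost term $j$ or $G$. So the whole argument is pointwise: for almost every $x\in\Omega$, knowing that $\bar u(x)$ minimizes $\varphi\mapsto \bar\varphi(x)u + \frac\alpha2 u^2 + \beta|u|_0$ over $[-\gamma,\gamma]$ (with $\varphi=\bar\varphi(x)$), I must deduce $\bar\varphi(x)v + g'(\bar u(x);v)\ge0$ for all $v\in T_{[-\gamma,\gamma]}(\bar u(x))$.

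First I would invoke Lemma \ref{L3.2}, which gives the complete list of $(\bar u(x),\bar\varphi(x))$ pairs compatible with the scalar minimum principle (in the present case $\sqrt{2\beta/\alpha}<\gamma$, so $\sqrt{2\alpha\beta}<\alpha\gamma$, and cases \ref{lem32_1}--\ref{lem32_6} apply). Then for each of these configurations I would check \eqref{eq42} directly using the explicit formula \eqref{E4.1} for $g'(\bar u(x);v)$. Concretely: if $|\bar u(x)|\ge\sqrt{2\beta/\alpha}$ then $\bar u(x)$ is an interior-or-boundary minimizer of the smooth convex function $\frac\alpha2u^2 + \bar\varphi(x)u$ on $[-\gamma,\gamma]$ (this is cases \ref{lem32_1}, \ref{lem32_2}, \ref{lem32_4}, \ref{lem32_5}), which gives exactly $(\bar\varphi(x)+\alpha\bar u(x))v\ge0$ for admissible $v$, i.e. \eqref{eq42} with $g'(\bar u(x);v)=\alpha\bar u(x)v$. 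If $\bar u(x)=0$ (cases \ref{lem32_3}, \ref{lem32_5}, \ref{lem32_6}), then Lemma \ref{L3.2} forces $|\bar\varphi(x)|\le\sqrt{2\alpha\beta}$ (one must combine case \ref{lem32_3} with the ``$|\varphi|\ge\alpha\gamma$'' branch and case \ref{lem32_6}), and then $\bar\varphi(x)v + \sqrt{2\alpha\beta}|v|\ge 0$ for all $v\in\R$, which is \eqref{eq42} with $g'(0;v)=\sqrt{2\alpha\beta}|v|$. The remaining subcase $0<|\bar u(x)|<\sqrt{2\beta/\alpha}$ occurs only in boundary cases of Lemma \ref{L3.2} and forces $|\bar\varphi(x)|=\sqrt{2\alpha\beta}$ with $\sign(\bar u(x))=-\sign(\bar\varphi(x))$, so $\bar\varphi(x)v + \sqrt{2\alpha\beta}\sign(\bar u(x))v = 0$, verifying \eqref{eq42} trivially.

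Finally I would remark that, since $\bar u$ is feasible for \Pbpc and satisfies the first-order necessary condition \eqref{eq41}, it is by definition a stationary point of \Pbpc. Of course, the cleanest route is simply to cite \cite[Lemma 3.25]{Wachsmuth2019}, since the statement is quoted verbatim from there; the above is the self-contained argument behind it.

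\textbf{Main obstacle.} The only delicate point is the bookkeeping of which $(\bar u(x),\bar\varphi(x))$ configurations from Lemma \ref{L3.2} can actually occur under the standing assumption $\sqrt{2\beta/\alpha}<\gamma$, and matching each against the correct branch of $g'$; in particular one must be careful that when $\bar u(x)=0$ the maximum principle genuinely yields $|\bar\varphi(x)|\le\sqrt{2\alpha\beta}$ and not merely $|\bar\varphi(x)|\le\frac{\alpha\gamma}2+\frac\beta\gamma$ (the last statement of Lemma \ref{L3.2}, together with the convexity comparison $\frac{\alpha\gamma}2+\frac\beta\gamma\ge\sqrt{2\alpha\beta}$, handles exactly this). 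Everything else is a routine case check, so I would present it compactly.
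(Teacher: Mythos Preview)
The paper supplies no proof for this lemma beyond the citation to \cite[Lemma 3.25]{Wachsmuth2019}; your self-contained pointwise verification via Lemma~\ref{L3.2} is correct and is exactly the argument behind that citation. Two minor clean-ups: under the standing assumption $\sqrt{2\beta/\alpha}<\gamma$ one has $\frac{\alpha\gamma}{2}+\frac{\beta}{\gamma}<\alpha\gamma$, so cases~\ref{lem32_2} and~\ref{lem32_3} of Lemma~\ref{L3.2} are vacuous, and hence $\bar u(x)=0$ forces $|\bar\varphi(x)|\le\sqrt{2\alpha\beta}$ directly from cases~\ref{lem32_5}--\ref{lem32_6}; and the subcase $0<|\bar u(x)|<\sqrt{2\beta/\alpha}$ never occurs at all (Corollary~\ref{C3.3}(1)(b)), so that paragraph can be dropped. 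Your remark in the ``main obstacle'' about invoking the last statement of Lemma~\ref{L3.2} is slightly off --- that statement gives the converse implication --- but the conclusion you need follows from the emptiness of cases~\ref{lem32_2}--\ref{lem32_3} just noted.
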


\begin{lemma}\label{L4.20}
 Let $\bar u$ satisfy the PMP for \Pb. Then
 \[
  g(\bar u(x)) = \frac\alpha2 |\bar u(x)|^2 + \beta|\bar u(x)|_0
 \]
for almost all $x\in \Omega$.
\end{lemma}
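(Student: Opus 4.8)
The plan is to reduce the claimed pointwise identity to the sparsity dichotomy already established for local minimizers of \Pb. Recall that throughout this subsection we are in the case $\sqrt{\frac{2\beta}\alpha} < \gamma$, so in particular $\alpha > 0$ and the function $g$ has the two-branch form used above. By the equivalence \eqref{eq405}, the equality $g(\bar u(x)) = \frac\alpha2 |\bar u(x)|^2 + \beta |\bar u(x)|_0$ holds if and only if $\bar u(x) = 0$ or $|\bar u(x)| \ge \sqrt{\frac{2\beta}\alpha}$. Hence it suffices to show that this alternative holds for almost all $x\in\Omega$.

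For this, I would invoke the first-order theory for \Pb. Since $\bar u$ satisfies the Pontryagin maximum principle, Corollary \ref{C3.3} applies and yields, for almost all $x\in\Omega$, the implication: if $\bar u(x) \ne 0$ then $|\bar u(x)| \ge \min\big(\sqrt{\frac{2\beta}\alpha},\,\gamma\big)$. In the present case $\sqrt{\frac{2\beta}\alpha} < \gamma$ this minimum equals $\sqrt{\frac{2\beta}\alpha}$, so for almost all $x$ we indeed have $\bar u(x) = 0$ or $|\bar u(x)| \ge \sqrt{\frac{2\beta}\alpha}$, and the desired identity follows from \eqref{eq405} applied pointwise.

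The only place where the argument genuinely uses the structure of \Pb (rather than of \Pbpc) is the appeal to Corollary \ref{C3.3}: it rests on the fact that, for a.a.~$x$, the value $\bar u(x)$ globally minimizes the scalar function $u\mapsto \bar\varphi(x)u + \frac\alpha2 u^2 + \beta|u|_0$ over $[-\gamma,\gamma]$, and Lemma \ref{L3.2} shows every such minimizer is either $0$ or has modulus at least $\sqrt{\frac{2\beta}\alpha}$. In particular, the intermediate values in $(0,\sqrt{\frac{2\beta}\alpha})$ that Lemma \ref{L4.2} permits for a mere stationary point of the convexified problem (on the set where $|\bar\varphi(x)| = \sqrt{2\alpha\beta}$) are excluded here, precisely because the maximum principle for \Pb is a genuinely nonconvex, global pointwise condition. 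Consequently no real obstacle arises: the lemma is essentially a reformulation of Corollary \ref{C3.3} together with the pointwise identity \eqref{eq405}, and the write-up is a short two-step deduction.
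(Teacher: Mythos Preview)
Your proposal is correct and takes essentially the same approach as the paper: invoke Corollary~\ref{C3.3} to obtain the dichotomy $\bar u(x)=0$ or $|\bar u(x)|\ge\sqrt{2\beta/\alpha}$, then apply \eqref{eq405}. The paper's proof additionally treats the case $\alpha=0$ (where $g(u)=(\frac{\alpha\gamma}{2}+\frac{\beta}{\gamma})|u|$ and Corollary~\ref{C3.3} gives $\bar u(x)\in\{-\gamma,0,\gamma\}$), which you reasonably omit under the standing assumption $\sqrt{2\beta/\alpha}<\gamma$ introduced at the end of Section~4.1.
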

\begin{proof}
 Consider first the case $\alpha>0$. Then by Corollary \ref{C3.3}, we have $\bar u(x)=0$ or $|\bar u(x)|\ge\min(\sqrt{\frac{2\beta}\alpha}, \gamma)  = \sqrt{\frac{2\beta}\alpha}$.
 The claim follows using the implication \eqref{eq405}.
 In the case $\alpha=0$, we have $g(\bar u(x)) =\beta|\bar u(x)|_0$ if and only if $\bar u(x)\in \{-\gamma,0,\gamma\}$.
 The latter inclusion is valid for almost all $x$ due to Corollary \ref{C3.3}.
\end{proof}

\begin{theorem}\label{T4.21}
 Let $\bar u$ satisfy the PMP for \Pb.
 Assume there exist $\delta>0$ and $\tau>0$ such that
 \[
  F''(\bar u)v^2 + \tilde G(\bar u; v^2)  \ge \delta \|z_v\|_{L^2(\Omega)}^2\quad \forall v\in C^\tau_{\bar u}.
 \]
Then there are $\rho>0$ and $\kappa>0$ such that
\[
 J(\bar u) + \kappa \|z_{u-\bar u}\|_{L^2(\Omega)}^2\le J(u)
\]
for all $u\in B_\rho(u) \cap \uad$.
In addition, $(B_\rho(u) \cap \uad) \setminus\{\bar u\}$ does not contain a control satisfying the  PMP for \Pb.
\end{theorem}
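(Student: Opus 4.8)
The plan is to deduce Theorem~\ref{T4.21} from the already-established optimality results for the partially convexified problem \Pbpc, using the fact that \Pbpc is a relaxation of \Pb that is exact at controls satisfying the PMP. First I would invoke Lemma~\ref{L4.19} to conclude that $\bar u$ is a stationary point of \Pbpc, so that the necessary optimality conditions of \Pbpc hold at $\bar u$ and Theorem~\ref{T4.15} (together with Theorem~\ref{T4.18}) is applicable. The hypothesis of Theorem~\ref{T4.21} is literally the hypothesis of Theorem~\ref{T4.15}, so there exist $\rho>0$ and $\kappa>0$ with
\[
 F(\bar u) + G(\bar u) + \kappa \|z_{u-\bar u}\|_{L^2(\Omega)}^2 \le F(u) + G(u)
\]
for all $u \in B_\rho(\bar u)\cap\uad$.

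Next I would transfer this inequality from $F+G$ to $J = F + j$. The key is Lemma~\ref{L4.20}, which gives $g(\bar u(x)) = \frac\alpha2|\bar u(x)|^2 + \beta|\bar u(x)|_0$ a.e., hence $G(\bar u) = j(\bar u)$; that is, the convexification is exact at $\bar u$. For arbitrary feasible $u$ we always have $G(u) \le j(u)$ by construction of $g$ as the convex hull of the integrand of $j$ on $[-\gamma,\gamma]$. Combining these facts,
\[
 J(\bar u) + \kappa\|z_{u-\bar u}\|_{L^2(\Omega)}^2 = F(\bar u) + G(\bar u) + \kappa\|z_{u-\bar u}\|_{L^2(\Omega)}^2 \le F(u) + G(u) \le F(u) + j(u) = J(u)
\]
for all $u \in B_\rho(\bar u)\cap\uad$, which is the first assertion.

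For the second assertion, I would argue by contradiction: suppose $u \in (B_\rho(\bar u)\cap\uad)\setminus\{\bar u\}$ satisfies the PMP for \Pb. By Lemma~\ref{L4.19}, $u$ is then a stationary point of \Pbpc. But Theorem~\ref{T4.18}, whose hypothesis is again exactly the one assumed here, asserts that $\bar u$ is an isolated stationary point of \Pbpc, so (after possibly shrinking $\rho$ to the radius furnished by Theorem~\ref{T4.18}, and taking the minimum of the two radii) no such $u$ can exist. Taking $\rho$ to be the smaller of the two radii coming from Theorems~\ref{T4.15} and~\ref{T4.18} gives both conclusions simultaneously.

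I do not expect any serious obstacle here: the theorem is essentially a packaging of Theorems~\ref{T4.15} and~\ref{T4.18} via the exactness relations $G(\bar u)=j(\bar u)$ and $G(u)\le j(u)$. The only points requiring minor care are: checking that Lemma~\ref{L4.19} indeed yields \emph{stationarity} of $\bar u$ for \Pbpc in the precise sense required as a hypothesis by Theorems~\ref{T4.15} and~\ref{T4.18} (this is exactly what ``satisfy the necessary optimality conditions of \Pbpc'' means, so it is immediate), and ensuring the two radii are intersected so that both the quadratic growth estimate and the isolation statement hold on the same ball.
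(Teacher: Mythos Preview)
Your proposal is correct and follows essentially the same route as the paper's proof: invoke Lemma~\ref{L4.19} to get stationarity for \Pbpc, apply Theorem~\ref{T4.15} for quadratic growth of $F+G$, transfer to $J$ via Lemma~\ref{L4.20} and the inequality $G\le j$, and then use Theorem~\ref{T4.18} (shrinking $\rho$ if needed) together with Lemma~\ref{L4.19} for the isolation statement. The paper's argument is identical in structure and detail.
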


The critical cones are defined in \eqref{eqdtau}--\eqref{eqctau}. The definition of $\tilde G$ is in \eqref{eqgtilde}.
\begin{proof}
 By Lemma \ref{L4.19}, $\bar u$ is stationary for \Pbpc. Theorem \ref{T4.15} implies that $\bar u$ is a local minimum of \Pbpc,
 and there are $\kappa>0$ and $\rho>0$ such that
\[
 F(\bar u) + G(\bar u) + \kappa \|z_{u-\bar u}\|_{L^2(\Omega)}^2\le F(u) + G(u)
\]
for all $u\in B_\rho(u) \cap \uad$. Take $u\in B_\rho(u) \cap \uad$.
Then we have the following
\[
  J(\bar u) + \kappa \|z_{u-\bar u}\|_{L^2(\Omega)}^2
  = F(\bar u) + G(\bar u)  + \kappa \|z_{u-\bar u}\|_{L^2(\Omega)}^2
  \le F(u) + G(u) \le J(u),
\]
where the first equality is due to Lemma \ref{L4.20}, and the last inequality follows from properties of the convex envelope.

By decreasing $\rho$ if necessary, Theorem \ref{T4.18} yields that $\bar u$ is an isolated stationary point for \Pbpc.
Since controls satisfying PMP for \Pb are stationary for \Pbpc by Lemma \ref{L4.19}, the claim follows.
\end{proof}

Let us briefly compare the bilinear forms that appear in the second-order necessary and sufficient optimality.
First, Theorem \ref{T3.6} states that
\[
  F''(\bar u)(v,v) + \alpha \|v\|_{L^2(\Omega)}^2 \ge 0
\]
for all $v \in C_{\bar u}$, where the critical cone $C_{\bar u}$ is given by
\[
 C_{\bar u}  = \{ v\in T_{\uad}(\bar u): \ v(x)=0 \text{ if } \bar u(x)=0 \text{ or } \bar \varphi(x) + \alpha\bar u(x)\ne0\}.
\]
The sufficient condition in Theorem \ref{T4.21} is based on the following inequality
\[
F''(\bar u)v^2 + \tilde G(\bar u; v^2)  \ge \delta \|z_v\|_{L^2(\Omega)}^2\quad \forall v\in C^\tau_{\bar u}.
\]
Clearly, it holds
$ %\[
 \alpha \|v\|_{L^2(\Omega)}^2  \ge \tilde G(\bar u; v^2)
$ %\]
for $v\in  C_{\bar u}$, and the inequality is strict
if the conditions $\sqrt{\frac{2\beta}\alpha}\le |\bar u(x)|<\gamma$ and $\sign(v(x))\ne\sign(\bar u(x))\}$
are satisfied on a set of positive measure.

\bigskip
Let us comment on the possibility to use some structural assumptions on $\bar u$ and $\varphi$ in order
to be able to weaken the sufficient optimality conditions. Such assumptions were used in \cite{CasasWachsmuthWachsmuth2017,CasasWachsmuthWachsmuth2018} for bang-bang control
problems.

In our case, one possibility would be to assume the following:
Let $\bar u$ with adjoint state $\bar\varphi$ be a stationary point of \Pbpc. Assume there is $c>0$ such that
\[
 \left| \left\{ x\in\Omega: \  \sqrt{2\alpha\beta}-\epsilon< |\bar\varphi(x)| \le \sqrt{2\alpha\beta} \right\}\right| \le c\epsilon
\]
for all $\epsilon \in (0,\sqrt{2\alpha\beta})$.
In addition, let us assume $\sqrt{\frac{2\beta}\alpha} < \gamma< +\infty$.

This assumption implies that the measure of the set $\{x\in\Omega: \ |\bar\varphi(x)|=\sqrt{2\alpha\beta}\}$ is zero.
As a consequence, by Lemma \ref{L4.2} we have that the measure of the set $\{x\in\Omega: \ 0<|\bar u(x)|<\sqrt{\frac{2\beta}\alpha}\}$ is zero.
Using this assumption, we get some additional growth from first-order expressions, compare Lemma \ref{L4.13}.

\begin{lemma}
Let $\bar u$ with adjoint state $\bar\varphi$ be a stationary point of \Pbpc. Assume that the structural assumption is fulfilled. Then
there is $\kappa>0$ such that
\[
 F'(\bar u)(u-\bar u) + G'(\bar u; u-\bar u) \ge\kappa \|u-\bar u\|_{L^1(\Omega_0)}^2
\]
is satisfied for all $u\in U_{ad}$, where $\Omega_0 := \{x\in\Omega: \ \bar u(x)=0\}$.
\end{lemma}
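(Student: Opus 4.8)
The plan is to estimate the integrand $\bphi(x)(u(x)-\bar u(x)) + g'(\bar u(x); u(x)-\bar u(x))$ pointwise on $\Omega_0$, where $\bar u(x)=0$, and to extract a genuine quadratic-type lower bound from the linear growth there. On $\Omega_0$ we have $g'(\bar u(x);v) = \sqrt{2\alpha\beta}\,|v|$ by \eqref{E4.1}, so the integrand equals $\bphi(x) v(x) + \sqrt{2\alpha\beta}\,|v(x)| \ge (\sqrt{2\alpha\beta} - |\bphi(x)|)\,|v(x)|$, which is $\ge 0$ by Lemma \ref{L4.2}. The point is that this is strictly positive away from the set where $|\bphi|$ is close to $\sqrt{2\alpha\beta}$, and the structural assumption controls the size of that set. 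So I would split $\Omega_0$ into the two pieces $\Omega_0^\epsilon := \{x\in\Omega_0: |\bphi(x)| \le \sqrt{2\alpha\beta}-\epsilon\}$ and its complement in $\Omega_0$, with $\epsilon>0$ to be chosen in terms of $\|u-\bar u\|_{L^1(\Omega_0)}$.

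On $\Omega_0^\epsilon$ the integrand is $\ge \epsilon |u(x)-\bar u(x)|$, giving
\[
 \int_{\Omega_0} \bphi v + g'(\bar u;v)\dx \ge \epsilon \int_{\Omega_0^\epsilon} |u-\bar u|\dx
 = \epsilon\Big( \|u-\bar u\|_{L^1(\Omega_0)} - \|u-\bar u\|_{L^1(\Omega_0\setminus\Omega_0^\epsilon)}\Big).
\]
On $\Omega_0\setminus\Omega_0^\epsilon$ we have $\sqrt{2\alpha\beta}-\epsilon < |\bphi(x)| \le \sqrt{2\alpha\beta}$, so the structural assumption gives $|\Omega_0\setminus\Omega_0^\epsilon| \le c\epsilon$; combined with the feasibility bound $|u(x)-\bar u(x)| = |u(x)| \le \gamma$ (here $\gamma<\infty$ is used) this yields $\|u-\bar u\|_{L^1(\Omega_0\setminus\Omega_0^\epsilon)} \le \gamma c \epsilon$. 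Hence
\[
 \int_{\Omega_0} \bphi v + g'(\bar u;v)\dx \ge \epsilon\|u-\bar u\|_{L^1(\Omega_0)} - \gamma c\,\epsilon^2.
\]
Now I would optimize over $\epsilon$: choosing $\epsilon = \frac{1}{2\gamma c}\|u-\bar u\|_{L^1(\Omega_0)}$ (which is admissible provided $\|u-\bar u\|_{L^1(\Omega_0)}$ is small enough that $\epsilon < \sqrt{2\alpha\beta}$; for larger values one argues directly since then $\epsilon$ can just be fixed) produces the bound $\frac{1}{4\gamma c}\|u-\bar u\|_{L^1(\Omega_0)}^2$, i.e.\ $\kappa = \frac{1}{4\gamma c}$.

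Finally I would transfer this to $F'(\bar u)(u-\bar u)+G'(\bar u;u-\bar u)$. By \eqref{eq42}, for $x\notin\Omega_0$ the integrand $\bphi(x)(u(x)-\bar u(x)) + g'(\bar u(x);u(x)-\bar u(x))$ is nonnegative since $u(x)-\bar u(x)\in T_{[-\gamma,\gamma]}(\bar u(x))$ (using $u\in\uad$); hence discarding the integral over $\Omega\setminus\Omega_0$ only decreases the left-hand side, and $F'(\bar u)(u-\bar u)+G'(\bar u;u-\bar u) \ge \int_{\Omega_0}\bphi v + g'(\bar u;v)\dx \ge \kappa\|u-\bar u\|_{L^1(\Omega_0)}^2$. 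The main obstacle is purely the bookkeeping in choosing $\epsilon$ as a function of $\|u-\bar u\|_{L^1(\Omega_0)}$ so that the subtracted term $\gamma c \epsilon^2$ is absorbed and $\epsilon$ stays in the admissible range $(0,\sqrt{2\alpha\beta})$; everything else is the pointwise case distinction already carried out in Lemmas \ref{L4.2} and \ref{L4.13}.
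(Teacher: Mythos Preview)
Your proposal is correct and follows essentially the same route as the paper's proof: split $\Omega_0$ according to whether $|\bphi|$ is $\epsilon$-close to $\sqrt{2\alpha\beta}$ or not, use the structural assumption together with the bound $|u-\bar u|\le\gamma$ to control the bad set, obtain $\epsilon\|u-\bar u\|_{L^1(\Omega_0)}-c\gamma\epsilon^2$, and optimize with $\epsilon=(2c\gamma)^{-1}\|u-\bar u\|_{L^1(\Omega_0)}$, giving $\kappa=(4c\gamma)^{-1}$. You even flag the admissibility constraint $\epsilon<\sqrt{2\alpha\beta}$, which the paper passes over silently; otherwise the two arguments coincide.
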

\begin{proof}
 Let $u\in U_{ad}$ be given.
 For  $\epsilon \in (0,\sqrt{2\alpha\beta})$ let us introduce the the set
 \[
  \Omega_{0,\epsilon} := \{x\in\Omega: \ \sqrt{2\alpha\beta}-\epsilon< |\bar\varphi(x)| < \sqrt{2\alpha\beta}\}.
 \]
 By definition, we have $\Omega_{0,\epsilon} \subset \Omega_0$.
 In addition, the measure of  $\Omega_{0,\epsilon} $ is bounded by $c\epsilon$ according to the structural assumption.
 Since $\bar u$ is stationary, we have
 \[
   F'(\bar u)(u-\bar u) + G'(\bar u; u-\bar u) \ge \int_{\Omega_0} \bar\varphi (u-\bar u) + g'(\bar u;u-\bar u) \dx.
 \]
 On $\Omega_0$, we have $|\bar\varphi(x)| \le\sqrt{2\alpha\beta}$ and $g'(\bar u;u-\bar u) =\sqrt{2\alpha\beta}|u-\bar u|$.
 Hence, it holds
\begin{multline*}
  \int_{\Omega_0} \bar\varphi (u-\bar u) + g'(\bar u;u-\bar u) \dx =  \int_{\Omega_0}\bar\varphi (u-\bar u) +\sqrt{2\alpha\beta}|u-\bar u|\dx\\
  \begin{aligned}
  &\ge \int_{\Omega_0\setminus \Omega_{0,\epsilon}}\bar\varphi (u-\bar u) +\sqrt{2\alpha\beta}|u-\bar u| \dx\\
  & \ge \epsilon\|u-\bar u\|_{L^1(\Omega_0\setminus \Omega_{0,\epsilon})}
   = \epsilon\|u-\bar u\|_{L^1(\Omega_0)} -\epsilon \|u-\bar u\|_{L^1(\Omega_{0,\epsilon})}\\
  & \ge\epsilon\|u-\bar u\|_{L^1(\Omega_0)} -c \epsilon^2 \gamma.
\end{aligned}
\end{multline*}
Setting $\epsilon:= \|u-\bar u\|_{L^1(\Omega_0)} (2c\gamma)^{-1}$ yields the claim
% \[
%  \int_{\Omega_0} \bar\varphi (u-\bar u) + g'(\bar u;u-\bar u) \dx \ge \kappa \|u-\bar u\|_{L^1(\Omega_0)}^2
% \]
with $\kappa:=(4c\gamma)^{-1}$.
\end{proof}

This result could be used to reduce the cone $ D^\tau_{\bar u}$, see \eqref{eqdtau}, to
\[
D^\tau_{\bar u}{}':=\{ v\in T_\uad(\bar u): \
   v(x) = 0 \text{ if } |\bphi(x)| <\sqrt{2\alpha\beta} \text{ or }|\bphi(x)| \ge \alpha\gamma + \tau \},
\]
However, in light of Lemma \ref{L4.6} and Remark \ref{R4.8}, it seems impossible to devise a structural assumption to obtain
\[
  F'(\bar u)(u-\bar u) + G'(\bar u; u-\bar u) \ge\kappa \|u-\bar u\|_{L^1(\Omega)}^2\quad \forall u\in U_{ad},
\]
as it was done in \cite{CasasWachsmuthWachsmuth2017,CasasWachsmuthWachsmuth2018} for bang-bang control problems.

\bibliographystyle{siam}
\bibliography{CW2019}
\end{document}